\newtheorem{thm}{Theorem}
\newtheorem*{defn}{Definition}
\newtheorem{question}[thm]{Question}
\newtheorem{corollary}[thm]{Corollary}
\newtheorem{conj}[thm]{Conjecture}
\newtheorem{claim}[thm]{Claim}
\newtheorem{prob}[thm]{Problem}
\newtheorem{lemma}[thm]{Lemma}
\begin{document}
\title{Monochromatic cycles and the monochromatic circumference in $2$-coloured graphs}
\author{Alex Scott and Matthew White \\
Mathematical Institute \\ 
24-29 St Giles' \\
Oxford \\
OX1 3LB \\
England 
\thanks{email: scott@maths.ox.ac.uk; white@maths.ox.ac.uk}}
\maketitle

\begin{abstract}
Li, Nikiforov and Schelp \cite{Li2010} conjectured that a $2$-edge coloured graph $G$ with order $n$ and $ \delta(G) >\frac{3}{4} n$ contains a monochromatic cycle of length $\ell$, for all $\ell \in [4,\left\lceil \frac{n}{2} \right\rceil]$. We prove this conjecture for sufficiently large $n$ and also find all $2$-edge coloured graphs with $\delta(G)=\frac{3}{4}n$ that do not contain all such cycles. Finally we show that, for all $\delta>0$ and $n>n_{0}(\delta)$, a $2$-edge coloured graph $G$ of order $n$ with $\delta(G) \geq\frac{3}{4} n$ either contains a monochromatic cycle of length at least $\left(\frac{2}{3}+\frac{\delta}{2}\right)n$, or contains a monochromatic cycle of length $\ell$, in the same colour, for all $\ell \in [3,\left(\frac{2}{3}-\delta\right)n]$.
\end{abstract}

\section{Introduction}
\label{Sec: Ramsey intro}
A well-known theorem of Dirac \cite{Dirac1952} states that a graph with order $n\geq 3$ and minimal degree at least $\frac{1}{2}n$ contains a Hamilton cycle.
\begin{thm}[Dirac \cite{Dirac1952}]
\label{Thm: Dirac}
Let $G$ be a graph of order $n\geq 3$. If $\delta(G) \geq \frac{1}{2}n$, then $G$ is hamiltonian.
\end{thm}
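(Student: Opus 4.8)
The plan is to use the classical longest-path rotation argument. First I would record two easy consequences of the hypothesis $\delta(G)\ge\frac12 n$: the graph $G$ is connected (any component has at least $\delta(G)+1>\frac12 n$ vertices, so there can be only one), and $\delta(G)\ge 2$ (since $n\ge 3$ gives $\delta(G)\ge\frac32$ and $\delta(G)$ is an integer). Now take a path $P=v_1v_2\cdots v_k$ of maximum length in $G$. By maximality, every neighbour of $v_1$ and every neighbour of $v_k$ lies on $P$; as $G$ has no loops, the neighbours of $v_1$ lie among $v_2,\dots,v_k$ and those of $v_k$ among $v_1,\dots,v_{k-1}$, and in particular $k-1\ge d(v_1)\ge\delta(G)\ge 2$, so $k\ge 3$.

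The key step is to close $P$ into a cycle on the same vertex set. Set $S=\{\,i\in\{1,\dots,k-1\}:v_1v_{i+1}\in E(G)\,\}$ and $T=\{\,i\in\{1,\dots,k-1\}:v_iv_k\in E(G)\,\}$, so that $|S|=d(v_1)\ge\frac12 n$ and $|T|=d(v_k)\ge\frac12 n$. Both $S$ and $T$ are subsets of $\{1,\dots,k-1\}$, which has only $k-1\le n-1$ elements, so $|S|+|T|\ge n>k-1$ forces some $i\in S\cap T$. For such an $i$ we have $v_1v_{i+1}\in E(G)$ and $v_iv_k\in E(G)$, and therefore $v_1v_2\cdots v_iv_kv_{k-1}\cdots v_{i+1}v_1$ is a cycle $C$ through all $k$ vertices of $P$ (the two arcs $v_1\cdots v_i$ and $v_k\cdots v_{i+1}$ are disjoint and together exhaust $V(P)$).

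It then remains to show $C$ is spanning, i.e.\ that $k=n$. If not, there is a vertex outside $V(C)=V(P)$, and since $G$ is connected there is an edge $uw$ with $u\notin V(C)$ and $w\in V(C)$. Removing from $C$ one of the two edges incident to $w$ turns $C$ into a path on $k$ vertices with $w$ as an endpoint, and appending the edge $wu$ produces a path on $k+1$ vertices, contradicting the maximality of $P$. Hence $k=n$ and $C$ is a Hamilton cycle.

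This is a short and classical argument, so there is no deep obstacle; the step needing the most care is the counting, where one must check honestly that $S$ and $T$ lie inside a set of size $k-1<n$ — which is precisely where maximality of $P$ enters — so that $|S|+|T|\ge n$ genuinely forces $S\cap T\ne\emptyset$. One could alternatively derive the theorem from the Bondy--Chv\'atal closure, since $\delta(G)\ge\frac12 n$ makes $d(u)+d(v)\ge n$ for every non-edge $uv$ and hence the closure of $G$ complete, but the self-contained rotation argument above is the most direct route.
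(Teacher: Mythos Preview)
Your argument is correct and is the standard rotation/longest-path proof of Dirac's theorem. The paper, however, does not supply its own proof of this statement: Theorem~\ref{Thm: Dirac} is simply quoted as a classical result with a citation to Dirac~\cite{Dirac1952}, and is then used as a tool elsewhere (for instance in the proofs of Lemma~\ref{Lemma: no large independent sets} and Lemma~\ref{Lemma: large sparse set}). So there is nothing to compare your approach against here; you have filled in a proof the paper deliberately omits.

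One small remark on presentation: in your pigeonhole step you rely on $k-1 < n$, which holds trivially when $k<n$ but only gives $k-1 = n-1 < n$ when $k=n$; in the latter case you still get $|S|+|T|\ge n > n-1 = k-1$, so the argument goes through uniformly, but it is worth stating this explicitly rather than writing ``$k-1\le n-1$'' and leaving the strictness implicit.
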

In fact, as noted by Bondy \cite{Bondy1971}, an immediate corollary of the following theorem is that such a graph will contain cycles of all lengths $\ell \in [3,n]$. We call such a graph \emph{pancyclic}.
\begin{thm}[Bondy \cite{Bondy1971}]
\label{Thm: bondy}
If $G$ is a hamiltonian graph of order $n$ such that $|E(G)|\geq \frac{n^{2}}{4}$, then either $G$ is pancyclic or $n$ is even and $G\cong K_{n/2,n/2}$.
\end{thm}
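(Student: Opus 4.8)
The plan is to fix a Hamilton cycle $C = v_1 v_2 \cdots v_n v_1$ of $G$ and to show that if $G$ is \emph{not} pancyclic then $n$ is even and $G \cong K_{n/2,n/2}$. The starting point is an averaging observation: since $\sum_{i=1}^{n}\bigl(d(v_i)+d(v_{i+1})\bigr) = 4|E(G)| \ge n^{2}$ (indices read modulo $n$), some edge of $C$, which we may relabel as $v_1 v_2$, satisfies $d(v_1)+d(v_2) \ge n$. The key point is that the neighbourhoods of $v_1$ and $v_2$ along $C$ already produce cycles of nearly every length: a chord $v_1 v_j$ splits $C$ into cycles of lengths $j$ and $n-j+2$; a chord $v_2 v_j$ splits $C$ into cycles of lengths $j-1$ and $n-j+3$; and a pair of chords $v_1 v_i$, $v_2 v_j$ with $3 \le i \le j \le n$ gives the cycle $v_1 v_i v_{i+1}\cdots v_j v_2 v_1$ of length $j-i+3$. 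Writing $A = \{\,j : v_1 v_j \in E(G)\,\}$ and $B = \{\,j : v_2 v_j \in E(G)\,\}$ as subsets of $\{1,\dots,n\}$, the failure of $G$ to contain a cycle of some length $m$ thus forces $m,\,n-m+2 \notin A$, forces $m+1,\,n-m+3\notin B$, and forces $m-3 \notin \{\,j-i : i\in A,\ j\in B,\ i\le j\,\}$.

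The second step is to show that these constraints, together with $|A|+|B| = d(v_1)+d(v_2) \ge n$, are so restrictive that $A$ and $B$ must be complementary residue classes modulo $2$ and $|A| = |B| = n/2$; in particular $n$ is even and $|E(G)| = n^{2}/4$. This is an extremal statement about binary sequences: among all $A,B$ with $|A|+|B|\ge n$, the only ones leaving some admissible length uncovered are the ``alternating'' ones, and those meet the degree bound with equality. I would prove it by tracking how a single missing length $m$ propagates: the exclusions above leave $A$ and $B$ with very few free positions, and forcing them still to be large pins down the parity pattern and removes all slack.

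For the final step I would promote this local picture at $v_1,v_2$ to the whole graph. Knowing that $v_1$ and $v_2$ see exactly the opposite parity classes along $C$, the cycle $C$ alternates between two sets $X,Y$ of size $n/2$; I would then argue that $G$ has no odd cycle at all --- for instance by re-running the chord argument at a heavy edge incident to an arbitrary vertex, or by showing that an odd cycle together with $|E(G)| = n^{2}/4$ would let one both shorten and lengthen cycles enough to reach every length --- so that $G$ is bipartite with parts $X,Y$, whence $|E(G)| \le |X|\,|Y| = n^{2}/4$ with equality only for $K_{n/2,n/2}$. The main obstacle is the second step: turning ``one cycle length is missing'' into the rigid conclusion that $A,B$ are exact parity classes with $|E(G)| = n^{2}/4$, since a crude count only bounds the number of missing lengths rather than forcing the extremal configuration. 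The bootstrapping in the last step is more routine but must be carried out carefully, since a priori $G$ might look bipartite only near the chosen heavy edge.
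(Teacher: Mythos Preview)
The paper does not prove this theorem: it is quoted from Bondy~\cite{Bondy1971} as a tool and used repeatedly (e.g.\ in the proofs of Lemma~\ref{Lemma: no large independent sets}, Lemma~\ref{Lemma: large sparse set}, and Lemma~\ref{Lemma: pathological case}), but no argument for it is given in the paper itself. So there is nothing in the paper to compare your proposal against.

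For what it is worth, your outline is in the spirit of Bondy's original argument, but the organisation differs. Bondy proceeds by induction on $n$: the main lemma is that if a Hamiltonian $G$ on $n$ vertices has \emph{no} $C_{n-1}$, then for every pair of consecutive vertices $v_i,v_{i+1}$ on the Hamilton cycle one has $d(v_i)+d(v_{i+1})\le n$ (since $v_1v_j\in E$ and $v_2v_{j+2}\in E$ would together produce a $C_{n-1}$), whence $e(G)\le n^2/4$, with equality forcing $K_{n/2,n/2}$. If $G$ does contain a $C_{n-1}$, one deletes the missing vertex and inducts. Your plan instead fixes a single heavy edge and tries to rule out \emph{every} missing length $m$ simultaneously from the chord constraints at that edge. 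The obstacle you flag in your second step is genuine: from $|A|+|B|\ge n$ and the exclusion of one length $m$ you do not directly get that $A$ and $B$ are full parity classes --- a single missing $m$ only kills $O(1)$ indices from $A\cup B$, not enough to force the rigid structure --- so as written the argument does not close. The inductive route sidesteps this by only ever needing the case $m=n-1$, where the forbidden-pair structure (shift by $2$) is clean and applies at \emph{every} edge of the cycle, not just a heavy one.
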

\begin{corollary}
\label{Cor: bondy and dirac}
Let $G$ be a graph of order $n\geq 3$. If $\delta(G) \geq \frac{1}{2}n$, then either $G$ is pancyclic or $n$ is even and $G\cong K_{n/2,n/2}$.
\end{corollary}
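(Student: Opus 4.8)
The plan is to derive Corollary~\ref{Cor: bondy and dirac} directly by combining Theorems~\ref{Thm: Dirac} and~\ref{Thm: bondy}, so essentially no new work is needed beyond checking that the hypotheses line up. First I would invoke Dirac's theorem: since $n \geq 3$ and $\delta(G) \geq \frac{1}{2}n$, Theorem~\ref{Thm: Dirac} gives that $G$ is hamiltonian. This puts us in a position to apply Bondy's theorem, provided the edge-count condition holds.

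Next I would verify the hypothesis $|E(G)| \geq \frac{n^2}{4}$ of Theorem~\ref{Thm: bondy}. This is the single computation in the argument, and it is immediate from the minimum-degree bound via a degree sum:
\[
|E(G)| = \frac{1}{2}\sum_{v \in V(G)} d_G(v) \geq \frac{1}{2} \cdot n \cdot \frac{n}{2} = \frac{n^2}{4}.
\]
With $G$ hamiltonian and $|E(G)| \geq \frac{n^2}{4}$, Theorem~\ref{Thm: bondy} yields that either $G$ is pancyclic or $n$ is even and $G \cong K_{n/2,n/2}$, which is exactly the statement of the corollary.

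I do not expect any genuine obstacle here; the proof is a two-line deduction, and the only thing to be careful about is not to discard the exceptional case. It is worth remarking (though not strictly necessary for the proof) that the alternative $G \cong K_{n/2,n/2}$ really can occur: this graph has minimum degree exactly $\frac{n}{2}$ yet contains no cycle of odd length, so the exceptional clause cannot be dropped. This corollary will presumably serve as the motivating classical analogue for the $2$-coloured minimum-degree results developed later in the paper.
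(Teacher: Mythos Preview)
Your proof is correct and matches the paper's approach: the paper does not give an explicit proof but simply calls this an ``immediate corollary'' of Bondy's theorem (Theorem~\ref{Thm: bondy}) once Dirac's theorem (Theorem~\ref{Thm: Dirac}) supplies hamiltonicity. The only step beyond quoting these two results is the handshake-lemma edge count, which you carry out correctly.
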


Given a graph $G$ with edge set $E(G)$, a $2$-edge colouring of $G$ is a partition $E(G) = E(R) \cup E(B)$, where $R$ and $B$ are spanning subgraphs of $G$. In a recent paper \cite{Li2010}, Li, Nikiforov and Schelp made the following conjecture, which will give an analogue of Corollary \ref{Cor: bondy and dirac} for $2$-edge coloured graphs.

\begin{conj}
Let $n\geq 4$ and let $G$ be a graph of order $n$ with $\delta\left(G\right) > \frac{3}{4}n$. If $E(G)=E(R) \cup E(B)$ is a $2$-edge colouring, then $C_{\ell} \subseteq R$ or $C_{\ell} \subseteq B$ for all $\ell \in \left[4,\left\lceil \frac{1}{2}n\right\rceil \right]$.
\end{conj}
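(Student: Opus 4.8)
The aim is, for each $\ell\in[4,\lceil n/2\rceil]$, to produce a red or a blue cycle of exactly that length; in practice I would try to pin down one colour carrying essentially the whole range and mop up the exceptional lengths separately. Write $d_R(v),d_B(v)$ for the colour degrees, so $d_R(v)+d_B(v)=d_G(v)>\frac34 n$ and hence $\max(d_R(v),d_B(v))>\frac38 n$ for every $v$; split $V(G)=V_R\cup V_B$ by majority colour at each vertex, assuming WLOG $|V_R|\ge\frac n2$. Two facts come for free. If $u,v\in V_R$ lay in distinct components of the red graph then $N_R(u)\cap N_R(v)=\emptyset$ with $u,v$ outside each other's red neighbourhoods, forcing $d_R(u)+d_R(v)\le n-2<\frac34 n$; hence $V_R$ lies in one red component $H$, so $|V(H)|\ge\frac n2$ and every $v\in V_R$ has all of its $>\frac38 n$ red edges inside $H$ — and symmetrically for blue. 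Also the \emph{strict} inequality gives $\Delta(\overline G)<\frac n4$, hence $\alpha(G)<\frac n4$, so $V(G)$ is not a union of four $G$-independent sets; in particular $R$ and $B$ are not both bipartite, and any colouring in which both colour classes are ``near-bipartite'' must leave many edges inside the would-be parts.

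The argument then divides into a generic case and an extremal case. In the generic case one locates a monochromatic subgraph $H'$ with $|V(H')|\ge\lceil n/2\rceil$ and $\delta(H')\ge\frac12|V(H')|$ — the natural candidate being the large red component $H$, from which one repeatedly deletes vertices of red degree less than half the current order (such vertices lie in $V_B$, hence have large blue degree) — and then Corollary~\ref{Cor: bondy and dirac} makes $H'$ pancyclic, furnishing monochromatic cycles of all lengths in $[4,\lceil n/2\rceil]$ at once; the complete-bipartite exception in that corollary is absorbed into the extremal analysis below. This procedure breaks down precisely when $H$ is only barely larger than $\frac n2$, or a positive fraction of it has small red (hence large blue) degree, or the only large dense monochromatic subgraph is complete bipartite; one shows that in each of these sub-cases the colouring is forced close to one of the extremal configurations at $\delta=\frac34 n$ — roughly, $V(G)$ split into four almost $G$-independent parts of size $\approx\frac n4$ on which both colours resemble $K_{n/2,n/2}$.

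The extremal case is the crux, and here the strict degree bound must be spent quantitatively: at $\delta=\frac34 n$ this configuration has \emph{no} monochromatic odd cycle whatsoever. Even lengths stay easy, since each colour contains a very dense bipartite block — one side, of size $\approx\frac n2$, has minimum degree $>\frac38 n$ — in which a rotation-extension argument yields monochromatic even cycles of every length up to $\lceil n/2\rceil$. For odd lengths, strictness forces monochromatic edges inside the parts; a single such edge incident to a monochromatic part-pair, spliced into a long monochromatic alternating path in the bipartite block, produces monochromatic odd cycles of every length from $5$ up to roughly $\frac n2$, and $C_4$ together with the small odd cycles comes from short configurations. The genuine work — and the main obstacle — is the stability analysis underpinning all of this: quantifying ``close to extremal'', showing the stray internal edges are numerous and well placed, and verifying that the alternating paths extend in steps of two all the way to $\lceil n/2\rceil$, so that no length is missed — in particular neither endpoint $4$ nor $\lceil n/2\rceil$. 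Colourings lying between the generic and extremal cases are handled by interpolation: the large monochromatic component and the degree bound give a long monochromatic cycle, and P\'osa-type rotations together with chord insertions then descend through all shorter lengths, any $o(n)$ inefficiency being harmless away from the two endpoints.
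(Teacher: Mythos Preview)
This statement is recorded in the paper as a \emph{conjecture} of Li, Nikiforov and Schelp; the paper establishes it only for sufficiently large $n$, as a consequence of Theorem~\ref{Thm: main result}. That proof is quite different from your sketch: it passes to an $(\epsilon,d)$-reduced graph via the Regularity Lemma, applies the structural Lemma~\ref{Lemma: reduced graph} to the reduced graph, and then uses the Blow-up Lemma (in the ``generic'' large-matching case) or the direct analyses of Lemmas~\ref{Lemma: large sparse set} and~\ref{Lemma: pathological case} (in the two near-extremal cases). Short cycles are handled separately in Section~\ref{Sec: short odd cycles} by an elementary argument.

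Your outline has a concrete error at the outset. You claim that $V_R$ lies in a single red component because $d_R(u)+d_R(v)\le n-2<\tfrac34 n$ for $u,v\in V_R$ in distinct red components; but $n-2<\tfrac34 n$ fails for every $n\ge 8$. Worse, the conclusion itself is false. Take $n=2m$ with $m\ge 7$, let $R$ be the disjoint union of two copies of $K_m$ on parts $C_1,C_2$, and let $B$ be an $(m-2)$-regular bipartite graph between $C_1$ and $C_2$. Then $\delta(G)=2m-3>\tfrac34 n$, every vertex has $d_R=m-1>m-2=d_B$, so $V_R=V(G)$, yet $R$ has two components. Thus you cannot base the ``generic case'' on a single large red component obtained this way.

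The generic case has a second, independent gap: you give no reason why iteratively deleting vertices of red degree below half the current order should terminate with at least $\lceil n/2\rceil$ vertices remaining. That deleted vertices have large blue degree does not control the size of what survives, and in colourings close to the $2$-bipartite extremal configuration both colour classes are essentially $K_{n/2,n/2}$, so one lands directly in the Bondy exception on the \emph{full} colour class rather than on a pruned pancyclic subgraph. This near-extremal regime is exactly where the work lies, and the paper handles it with the stability analysis of Section~\ref{Sec: proof of extremal lemma}; your proposal names the difficulty but does not engage with it.
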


Note that we may only ask for $\ell$ in this range. For example, taking the $2$-colouring of $K_{5}$ as a red and blue $C_{5}$ and blowing up, we get a graph with $\delta(G)=\frac{4}{5}|G|$ but no monochromatic $C_{3}$. Similarly letting $R$ be the complete bipartite graph with vertex classes of order $\lfloor \frac{n}{2} \rfloor$ and $\lceil \frac{n}{2} \rceil$, and letting $B$ be the complement, we obtain a $2$-colouring of the complete graph with no monochromatic odd cycle of length $\ell > \lceil \frac{n}{2} \rceil$. In \cite{Li2010}, Li, Nikiforov and Schelp proved the following partial result.

\begin{thm}
\label{Thm: li's result}
Let $\epsilon >0$, let $G$ be a graph of sufficiently large order $n$, with $\delta\left(G\right) > \frac{3}{4}n$. If $E(G)=E(R) \cup E(B)$ is a $2$-edge colouring, then $C_{\ell} \subseteq R$ or $C_{\ell} \subseteq B$ for all $\ell \in \left[4,\left\lfloor \left(\frac{1}{8}-\epsilon\right)n\right\rfloor \right]$.
\end{thm}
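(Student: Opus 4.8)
The plan is to reduce, via two elementary observations, to a case analysis on the bipartiteness of the colour classes, and then to produce the required cycles directly in the bipartite case and by the regularity method otherwise. The two observations: (i) $e(R)+e(B)=e(G)>\tfrac38 n^{2}$, so one colour — say red — has $e(R)>\tfrac{3}{16}n^{2}$, which by an Erd\H{o}s--Gallai bound gives a red cycle of length $\geq\bigl(\tfrac38-o(1)\bigr)n$ and a red subgraph of minimum degree $\geq\tfrac{3}{16}n$; and (ii) $\alpha(G)\leq n-\delta(G)<\tfrac14 n$, so $\chi(G)>4$, and since a union of two bipartite graphs is $4$-colourable, at least one of $R,B$ is non-bipartite.

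Suppose first that some colour class, say $R$, is bipartite, with bipartition $X\cup Y$, $|X|\geq|Y|$, so $|X|\geq\tfrac n2$. Each red edge meets $Y$, so for $v\in X$ every edge from $v$ into $X$ is blue while at most $|Y|$ edges leave $v$ towards $Y$; hence
\[
\delta\bigl(B[X]\bigr)\;>\;\tfrac34 n-|Y|\;=\;|X|-\tfrac14 n\;\geq\;\tfrac12|X|,
\]
with the first inequality strict. By Corollary~\ref{Cor: bondy and dirac}, $B[X]$ is pancyclic or isomorphic to $K_{|X|/2,|X|/2}$; the latter is excluded, having minimum degree exactly $\tfrac12|X|$. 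So $B\supseteq C_{\ell}$ for all $\ell\in[3,|X|]\supseteq\bigl[4,\lfloor(\tfrac18-\epsilon)n\rfloor\bigr]$, and we are done. By symmetry we may assume henceforth that $R$ and $B$ are both non-bipartite.

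For this remaining case I would use the regularity method. Apply Szemer\'edi's regularity lemma to $G$, form the reduced graph $\mathcal{R}$ on $m$ clusters with $\delta(\mathcal{R})\geq\bigl(\tfrac34-o(1)\bigr)m$, and colour each dense regular pair of $\mathcal{R}$ by its majority colour in $G$. Running the argument of the previous paragraph inside $\mathcal{R}$ — a bipartite colour class of $\mathcal{R}$ would, after blow-up, already give monochromatic cycles of all lengths up to $\approx\tfrac n2$ — lets us assume both colours of $\mathcal{R}$ are non-bipartite too. Since $\mathcal{R}$ is very dense, one would then extract a monochromatic connected matching $M$ covering at least $\bigl(\tfrac18+c\bigr)m$ clusters: if, say, red admitted no large connected matching, then deleting a small vertex cover of the red graph of $\mathcal{R}$ (size at most twice its matching number, by K\H{o}nig--Gallai) would leave many vertices of tiny red degree, hence of huge blue degree, forcing a large blue connected matching in a dense blue component instead. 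Blowing up $M$ — say it is red, lying in a red component $\mathcal{C}$ of $\mathcal{R}$ — would then yield red cycles of every length in $\bigl[4,(\tfrac18-\epsilon)n\bigr]$: the long lengths by routing a cycle through all of $M$; even lengths $\geq 4$ from a single regular pair of $M$; odd lengths $\geq 5$ by traversing a short odd cycle of $\mathcal{C}$ with one regular pair padded to adjust parity.

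The hardest part will be the last point, producing the short odd cycles. A regular pair is bipartite, so every monochromatic odd cycle must run around an odd cycle of the host component $\mathcal{C}$; knowing only that $\mathcal{C}$ is non-bipartite yields \emph{some} odd cycle, possibly long, whereas reaching $C_{5}$ needs $\mathcal{C}$ to have odd girth at most $5$. One is therefore forced to find a monochromatic piece that is at once non-bipartite, large enough (covering $\geq\bigl(\tfrac18+c\bigr)m$ clusters, so the blow-up reaches length $\tfrac18 n$), \emph{and} dense in its own colour (relative minimum degree past the Andr\'asfai--Erd\H{o}s--S\'os-type threshold that caps the odd girth at $5$) — while tracking how each parameter degrades through regularity. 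Reconciling these three demands for a single monochromatic subgraph is the technical heart; improving the constant in the connected-matching bound is precisely what one would push further to move from $\tfrac18 n$ towards the conjectured $\tfrac12 n$, while the even lengths, the long lengths and the bipartite case are comparatively routine.
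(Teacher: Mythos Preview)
This theorem is not proved in the paper: it is quoted from Li, Nikiforov and Schelp~\cite{Li2010} as the partial result that Theorem~\ref{Thm: main result} then supersedes. There is therefore no proof here to compare against directly, although the paper remarks (just before Claim~\ref{Claim: C3 or C5 is enough}) that its treatment of short odd cycles follows the method of~\cite{Li2010}, and the paper's proof of the stronger Theorem~\ref{Thm: main result} shows how an argument of your general shape can be completed.

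Your bipartite-colour-class case is correct and is essentially the paper's Lemma~\ref{Lemma: no large independent sets}, with the strict inequality $\delta(G)>\tfrac34 n$ used to rule out the $K_{|X|/2,|X|/2}$ exception.

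In the non-bipartite case there are two genuine gaps. First, the connected-matching step does not follow from the vertex-cover argument you sketch: the absence of a large \emph{connected} red matching in the reduced graph does not bound the total red matching number, so a cover of size $2\nu$ need not be small. (For the modest target $(\tfrac18+c)m$ a more direct route exists anyway: the denser colour in the reduced graph has more than $\tfrac{3}{16}m^{2}$ edges, so by Theorem~\ref{Thm: erdos-gallai} it contains a path of length exceeding $\tfrac38 m$, hence a connected matching on that many vertices. The paper's Lemma~\ref{Lemma: reduced graph} obtains far more via the Tutte--Berge formula.) Second --- and you flag this yourself --- the short odd lengths are unresolved: your plan needs a monochromatic component of the reduced graph that is simultaneously large, non-bipartite, and of odd girth at most~$5$, and the Andr\'asfai--Erd\H{o}s--S\'os heuristic does not deliver this without substantial further work.

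Both the paper and, by its own account,~\cite{Li2010} sidestep this difficulty by decoupling short cycles from the regularity argument. Since $\delta(G)>\tfrac34 n$, Tur\'an's theorem forces $K_{5}\subseteq G$; any $2$-colouring of $K_{5}$ contains a monochromatic $C_{3}$ or $C_{5}$; Theorem~\ref{Thm: gyori et al} together with Theorem~\ref{Thm: bondy/simonovits} then yields a monochromatic $C_{\ell}$ for every $\ell\in[4,K]$ and any fixed constant $K$, entirely elementarily (this is Lemma~\ref{Lemma: odd cycles} and Claim~\ref{Claim: C3 or C5 is enough}). Regularity is invoked only for $\ell\geq K$, where the parity problem is harmless: a single odd cycle of \emph{any} length in the non-bipartite host component, threaded through one super-regular pair, already gives both parities above the constant threshold~$K$. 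Adopting this split eliminates what you call the technical heart of your proposal.
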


We will prove the conjecture for sufficiently large $n$, but first we will define a set of $2$-edge coloured graphs showing that the degree bound $\frac{3}{4}n$ is tight.

\begin{defn}
Let $n=4p$ and let $G$ be isomorphic to $K_{p,p,p,p}$. A \emph{$2$-bipartite $2$-edge colouring} of $G$ is a $2$-edge colouring $E(G)=E(R) \cup E(B)$ such that both $R$ and $B$ are bipartite.
\end{defn}

If $G \cong K_{p,p,p,p}$ and $G$ has a $2$-bipartite $2$-edge colouring, let $V_{1} \cup V_{2}$ be the bipartition of $R$ and $W_{1} \cup W_{2}$ be the bipartition of $B$. Let $U_{i,j}=V_{i} \cap W_{j}$ for all $i,j \in \{1,2\}$. Then the $U_{i,j}$ are four independent sets of $G$ covering the vertices, and so must be the four independent sets of order $p$. So, a $2$-bipartite $2$-edge colouring of $K_{p,p,p,p}$ forces a labelling of the independent sets $\left\{U_{i,j}:i,j \in \{1,2\}\right\}$ such that:
\begin{itemize}
\item all edges between $U_{1,1}$ and $U_{1,2}$ and between $U_{2,1}$ and $U_{2,2}$ are blue;
\item all edges between $U_{1,1}$ and $U_{2,1}$ and between $U_{1,2}$ and $U_{2,2}$ are red;
\item edges between $U_{1,1}$ and $U_{2,2}$ and between $U_{2,1}$ and $U_{1,2}$ can be either colour.
\end{itemize}
Hence, if $4$ divides $n$, the graph $K_{n/4,n/4,n/4,n/4}$ with a $2$-bipartite $2$-edge colouring has minimal degree $\frac{3}{4}n$ and no monochromatic odd cycles. Note that for a fixed labelling of the graph, there are $2^{2p^{2}}$ $2$-bipartite $2$-edge colourings of $K_{p,p,p,p}$. However, $K_{p,p,p,p}$ has $24\left(p!\right)^{4}=2^{O\left(p\log p\right)}$ automorphisms and so there are $2^{2p^{2}+O\left(p\log p\right)}$ distinct $2$-bipartite $2$-edge colourings of $K_{p,p,p,p}$. In fact we will prove that $K_{n/4,n/4,n/4,n/4}$ is the only extremal graph; although any $2$-bipartite $2$-edge colouring of $K_{n/4,n/4,n/4,n/4}$ is extremal.

\begin{thm}
\label{Thm: main result}
Let $n$ be sufficiently large and let $G$ be a graph of order $n$ with $\delta\left(G\right) \geq \frac{3}{4}n$. Suppose that $E(G)=E(R_{G}) \cup E(B_{G})$ is a $2$-edge colouring. Then either $C_{\ell} \subseteq R$ or $C_{\ell} \subseteq B$ for all $k \in \left[4,\left\lceil \frac{1}{2}n\right\rceil \right]$, or $n=4p$, $G \cong K_{p,p,p,p}$ and the colouring is a $2$-bipartite $2$-edge colouring.
\end{thm}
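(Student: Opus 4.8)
The plan is to combine a stability/absorption strategy with regularity-type structural analysis. First I would set up the framework: given the $2$-edge-coloured $G$ with $\delta(G)\ge\frac34 n$, consider the majority colour, say red, and note that the red graph $R$ has average degree close to $\frac34 n$ on at least half the vertices, so by a Dirac-type argument $R$ (or a large subgraph of it) is either close to Hamiltonian or highly structured. I would first dispose of the "robustly connected" case: if $R$ contains a long cycle and is not close to bipartite, then Bondy's theorem (Theorem~\ref{Thm: bondy}) together with an expansion argument yields red cycles of all lengths in $[4,\lceil n/2\rceil]$, and we are done. Symmetrically for blue. So the substantive case is when \emph{both} $R$ and $B$ fail to be pancyclic in the required range; the goal is then to show this forces $G\cong K_{p,p,p,p}$ with a $2$-bipartite colouring.

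The key steps, in order: (1) \emph{Reduce to a near-extremal configuration.} Using Theorem~\ref{Thm: li's result} as a black box, we already have monochromatic cycles of all lengths up to $(\frac18-\epsilon)n$ in one colour; the work is to push this to $\lceil n/2\rceil$. Show that if, say, red misses some cycle length $\ell\le\lceil n/2\rceil$, then $R$ must be "almost bipartite" — more precisely, $R$ has a vertex partition into two parts each spanning few edges, with $o(n^2)$ red edges inside the parts, for otherwise an odd-cycle/rotation-extension argument produces the missing length. (2) \emph{Propagate bipartiteness to both colours.} If $R$ is almost bipartite with parts $X_1,X_2$, then each $X_i$ is almost-complete in blue internally (since $\delta(G)\ge\frac34n$ forces many edges, and few are red inside $X_i$), so blue contains long cycles inside $X_i$; combined with cross edges one then argues $B$ must \emph{also} be almost bipartite, with parts $Y_1,Y_2$ that cross $X_1,X_2$ roughly evenly. (3) \emph{Clean up to exact structure.} Set $U_{i,j}$ as the four intersection classes; minimum degree $\frac34 n$ forces each $|U_{i,j}|$ to be close to $n/4$, and forces $G$ to contain all edges between classes in "different" $R$-part and "different" $B$-part, i.e.\ $G$ is close to $K_{p,p,p,p}$. (4) \emph{Stability-to-exactness / absorption.} Finally, show any deviation — an edge inside some $U_{i,j}$, a missing edge of $K_{p,p,p,p}$, an unbalanced class, or a monochromatic edge violating the forced colour pattern — can be exploited, via a short path-rotation or cycle-merging argument using the $\frac34 n$ degree bound, to build a monochromatic odd cycle of every length up to $\lceil n/2\rceil$ in some colour, contradicting our assumption. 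This pins down $n=4p$, $G\cong K_{p,p,p,p}$, and the $2$-bipartite colouring exactly.

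I expect the main obstacle to be step (2) together with the exactness part of step (4): moving from "both colours almost bipartite" to "both colours exactly bipartite on a balanced $K_{p,p,p,p}$" is delicate because the two bipartitions interact, and a small number of exceptional vertices or wrongly-coloured edges could in principle still avoid creating the required cycles. The argument needs a careful case analysis of how a single bad edge sits relative to the four classes $U_{i,j}$, showing in each case that one can route a monochromatic cycle of the desired length through it — essentially an absorption lemma tailored to the $K_{p,p,p,p}$ structure. Handling the full range of $\ell$ up to $\lceil n/2\rceil$ simultaneously (rather than one length at a time) is what makes this tight, since the extremal colouring genuinely has odd cycles only up to length $\lceil n/2\rceil$, so the extremal analysis must be exact rather than approximate. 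The even-cycle lengths and the short lengths $\ell\in[4,\text{const}]$ should be comparatively easy and can be dealt with separately at the start.
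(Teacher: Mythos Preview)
Your outline diverges substantially from the paper's proof, and it contains genuine gaps rather than just a different route.

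\textbf{The paper's actual structure.} The paper does \emph{not} argue by contraposition (``if both colours miss a cycle then both are almost bipartite''). Instead it applies the Regularity Lemma and proves a structural trichotomy (Lemma~\ref{Lemma: reduced graph}) for the reduced graph $H$: either (i) some monochromatic component of $H$ contains a matching covering $(\tfrac23+\delta)k$ vertices, or (ii) one colour class of $H$ has a large set on which it has tiny maximum degree, or (iii) $H$ admits a balanced $4$-partition with no red edges across one cut and no blue edges across the other. Case~(i) is the \emph{generic} case and is where the long cycles are actually built, via the Blow-up Lemma applied to the matching; cases~(ii) and~(iii) are handled by separate lemmas (Lemmas~\ref{Lemma: large sparse set} and~\ref{Lemma: pathological case}), and only case~(iii) leads toward the extremal $K_{p,p,p,p}$ configuration. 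Short cycles up to a fixed constant are dealt with by an independent argument (Lemma~\ref{Lemma: odd cycles}) using Bondy--Simonovits and Gy\H{o}ri--Nikiforov--Schelp.

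\textbf{Specific gaps in your plan.} First, the opening claim is wrong: the majority colour has average degree at least $\tfrac38 n$, not ``close to $\tfrac34 n$'', so no Dirac-type argument applies directly to $R$ or $B$; this is exactly why the paper passes to the reduced graph, where the minimum-degree inheritance gives $\delta(H)\ge(\tfrac34-\delta)k$ for $H$ itself, not for a colour class. Second, you cannot invoke Theorem~\ref{Thm: li's result} as a black box: it assumes $\delta(G)>\tfrac34 n$ strictly, and the paper explicitly remarks that a separate argument (Claim~\ref{Claim: C3 or C5 is enough} and the case analysis in Lemma~\ref{Lemma: odd cycles}) is needed for $\delta(G)=\tfrac34 n$. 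Third, and most seriously, your Step~(1) --- ``if $R$ misses some cycle length then $R$ is almost bipartite'' --- is precisely the hard content of the theorem, and you have supplied no mechanism for it. An ``odd-cycle/rotation-extension argument'' does not by itself produce cycles of \emph{all} lengths up to $\lceil n/2\rceil$ in a graph of minimum degree only $\approx\tfrac38 n$; the paper needs the full regularity machinery (connected matching in the reduced graph, then Blow-up) to get paths of every length in a controlled range, and even then the bipartite case of the reduced-graph component requires a further delicate analysis of $R_G$ on the two sides (the passage involving $X,Y,X',Y',X'',Y''$ and Chv\'atal's theorem). Your Steps~(3)--(4) are closer in spirit to what Lemma~\ref{Lemma: pathological case} does, but that lemma is only one of three branches, not the whole proof.

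In short: the stability intuition is right for the extremal characterisation, but you are missing the engine that produces the long monochromatic cycles in the non-extremal case, and that engine is the Regularity/Blow-up argument applied to a large connected monochromatic matching in the reduced graph.
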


We define the \emph{monochromatic circumference} of a $k$-edge coloured graph $G$ to be the length of the longest monochromatic cycle. In \cite{Li2010}, the authors also posed the following question.

\begin{question}
Let $0<c<1$ and $G$ be a graph of sufficiently large order $n$. If $\delta(G)>cn$ and $E(G)$ is $2$-coloured, how long monochromatic cycles are there?
\end{question}

For graphs $G$ with $\delta(G) \geq \frac{3}{4}n$ we show that the monochromatic circumference is at least $\left(1+o(1)\right) \frac{2}{3}n$. In fact, we show the following result.

\begin{thm}
\label{Thm: circumference}
Let $n$ be sufficiently large and $0<\delta\leq \frac{1}{180}$. Let $G$ be a graph of order $n$ with $\delta(G) \geq \frac{3}{4}n$. Suppose that $E(G)=E(R_{G}) \cup E(B_{G})$ is a $2$-edge colouring. Then either $G$ has monochromatic circumference at least $\left(\frac{2}{3}+\frac{\delta}{2} \right)n$, or one of $R_{G}$ and $B_{G}$ contains $C_{\ell}$ for all $\ell \in\left[3,\left(\frac{2}{3}-\delta\right)n\right]$.
\end{thm}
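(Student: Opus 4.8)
The plan is to prove the dichotomy directly: assuming $G$ has monochromatic circumference less than $\left(\frac23+\frac\delta2\right)n$, I will show one colour class contains $C_\ell$ for every $\ell\in\left[3,\left(\frac23-\delta\right)n\right]$. The engine is Szemer\'edi's Regularity Lemma. Apply it to the $2$-coloured $G$ with regularity parameter $\varepsilon$ and density threshold $d$ small in terms of $\delta$, and form the reduced graph $\Gamma$ on the $N$ clusters, keeping a pair when it is $\varepsilon$-regular with density at least $d$ in at least one colour and colouring that edge of $\Gamma$ by such a colour. A routine count gives $\delta(\Gamma)\geq\left(\frac34-o(1)\right)N$. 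The reduction to connected matchings is then standard: a monochromatic connected matching of $\Gamma$ covering $\mu N$ clusters blows up to monochromatic cycles of every even length up to $(1-o(1))\mu n$, and if in addition the monochromatic component carrying it contains a monochromatic triangle all of whose three pairs have density at least $d$, then to monochromatic cycles of \emph{every} length in $\left[3,(1-o(1))\mu n\right]$ --- grow the short cycles living inside the triangle's three clusters by absorbing the matching edges one at a time, each contributing any length between $0$ and $\approx 2n/N$. In particular, if $\Gamma$ has a monochromatic connected matching covering more than $\left(\frac23+\delta\right)N$ clusters we obtain a monochromatic cycle longer than $\left(\frac23+\frac\delta2\right)n$, contrary to assumption; so it remains to show that if no monochromatic connected matching of $\Gamma$ covers more than $\left(\frac23+\delta\right)N$ clusters, then some colour class has a component $D$ that contains a triangle of density at least $d$ and a connected matching covering at least $\left(\frac23-\frac\delta2\right)N$ clusters.

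I would establish this in two stages. First, a lower bound: every $2$-coloured graph with minimum degree $\left(\frac34-o(1)\right)N$ has a monochromatic connected matching covering at least $\left(\frac23-o(1)\right)N$ vertices. This is the quantitative core; it is proved by a Gy\'arf\'as--\L uczak style analysis of the monochromatic components (one monochromatic component already meets all but $o(N)$ vertices, and the minimum degree then yields a near-perfect matching inside it up to a bounded defect), and this is where $\delta(G)\geq\frac34 n$ is used essentially. Say red attains it, with component $D$ and matching $M$. Second, the triangle upgrade: I must show $D$ contains a triangle of density at least $d$. If it does not, the red graph on $D$ restricted to dense pairs is triangle-free; since $\delta(\Gamma)\geq\left(\frac34-o(1)\right)N$ and $|V(\Gamma)\setminus D|\leq\left(\frac13+o(1)\right)N$, a short case analysis --- invoking Bondy's theorem (Theorem~\ref{Thm: bondy}) and Corollary~\ref{Cor: bondy and dirac} on a dense monochromatic subgraph in each branch --- produces either a dense monochromatic triangle in $D$ after all, or forces \emph{both} colour classes to be bipartite up to an $o(N)$ defect, so that $\Gamma$ is $o(N)$-close to a $2$-bipartite $2$-colouring of $K_{N/4,N/4,N/4,N/4}$. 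But the itemised description of those colourings in the text shows that then one colour has a component which is complete bipartite with both sides of size $\left(\frac12-o(1)\right)N$, hence a connected matching covering $(1-o(1))N$ clusters, contradicting the standing assumption. So $D$ has a dense triangle; using $|V(\Gamma)\setminus D|\leq\left(\frac13+o(1)\right)N$ once more, $D$ carries a connected matching covering at least $\left(\frac23-\frac\delta2\right)N$ clusters, and blowing up $D$ gives red cycles of every length in $\left[3,\left(\frac23-\delta\right)n\right]$. The bound $\delta\le\frac1{180}$ is what is needed for all the $o(1)$ losses --- from $\varepsilon$, $d$, the exceptional set, the matching defect, and the blow-up --- to fit inside the slack between $\frac23-\delta$ and $\frac23+\frac\delta2$.

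The hard part will be this two-stage argument, and in particular the triangle upgrade: for a monochromatic component of $\Gamma$ to produce \emph{short} monochromatic cycles after blow-up it is not enough to be non-bipartite (the blow-up of $C_5$ is triangle-free), so one must genuinely locate a dense monochromatic triangle, and this is only possible once a long monochromatic cycle has been ruled out --- the $C_5$-blow-up from the introduction has $\delta(G)=\frac45 n$ and no monochromatic triangle at all, surviving precisely because it carries a monochromatic Hamilton cycle and so lies in the first alternative. Arranging the case analysis so that every branch either contradicts the $\left(\frac23+\delta\right)N$ cap or exhibits the desired component-with-triangle, while keeping the numerical losses within the allowed slack, is the technically demanding step; the remainder is the by-now-standard regularity-plus-connected-matchings bookkeeping.
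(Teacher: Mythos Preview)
Your two-stage argument has a genuine gap: the claim ``every $2$-coloured graph with minimum degree $\left(\frac34-o(1)\right)N$ has a monochromatic connected matching covering at least $\left(\frac23-o(1)\right)N$ vertices'' is false, and the parenthetical justification ``one monochromatic component already meets all but $o(N)$ vertices'' is where it breaks. Take $\Gamma$ to admit a balanced $4$-partition $U_1,U_2,U_3,U_4$ with $|U_i|\approx N/4$, no red edges between $U_1\cup U_2$ and $U_3\cup U_4$, and no blue edges between $U_1\cup U_3$ and $U_2\cup U_4$ (this is case (iii) of Lemma~\ref{Lemma: reduced graph}). Then every monochromatic component of $\Gamma$ has at most $\left(\frac12+o(1)\right)N$ vertices, so no monochromatic connected matching covers more than $\left(\frac12+o(1)\right)N$ clusters. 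Your escape hatch for this situation --- ``$\Gamma$ is $o(N)$-close to a $2$-bipartite $2$-colouring, and then one colour has a connected matching covering $(1-o(1))N$ clusters'' --- is also wrong: the itemised description allows the diagonal edges between $U_{1,1},U_{2,2}$ and between $U_{1,2},U_{2,1}$ to be either colour, and if all of them are blue then red splits into two components of size $N/2$ and blue does likewise. That is precisely the $4$-partition configuration just described, and no connected matching of size beyond $N/2$ exists in either colour.

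The paper deals with this case separately (Lemma~\ref{Lemma: pathological case}), working directly in $G$ rather than in the reduced graph: the near-complete bipartite structure between the four blocks, combined with Menger's theorem and Chv\'atal's theorem, is used to link two half-graph paths into a monochromatic cycle of length at least $(1-59\delta)n$. This cannot be recovered from a connected-matching argument in $\Gamma$ alone, since in $\Gamma$ the matchings are simply too small. Even outside this case the paper proceeds differently from your triangle upgrade: when no large connected matching exists but we are not in the $4$-partition case, Lemma~\ref{Lemma: reduced graph} produces a set $S$ of $\left(\frac23-\frac\delta2\right)k$ clusters on which one colour of $\Gamma$ has maximum degree $\le 10\delta k$; the paper then pulls $S$ back to $G$ and applies Dirac plus Bondy to the \emph{other} colour on those vertices to get pancyclicity directly, bypassing any need to locate a dense monochromatic triangle in the reduced graph.
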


Note that the last statement requires monochromatic cycles of all lengths in some prescribed set of integers, as in Theorem \ref{Thm: main result}. However, here these cycles are required to be of the \emph{same} colour. Also, the upper bound on $\delta$ is of a technical nature, and we are interested in small $\delta$. There are similar technical bounds throughout.

For integers $t\leq s$, we define the following $2$-edge coloured graph, which with $s=2t$ shows that Theorem \ref{Thm: circumference} is asymptotically sharp.

\begin{defn}
Let $F_{s,t}$ be the complete graph on $t+s$ vertices, with $V=V(F_{s,t})$. Let $A\subseteq V$ be a set of order $s$. We $2$-edge colour $F_{s,t}$ by letting all edges between $A$ and $V\setminus A$ be blue, and all other edges be red. The blue graph is bipartite and has circumference $2|V\setminus A| =2t$. The red graph has circumference $s$. Thus the monochromatic circumference of $F_{s,t}$ is $\max\{s,2t\}$.
\end{defn}

Let $n=3t$. Then $|F_{2t,t}|=n$, $\delta(F_{2t,t})=n-1$ and $F_{2t,t}$ has monochromatic circumference $\frac{2}{3}n$. Hence Theorem \ref{Thm: circumference} is asymptotically sharp.

We shall show that a linear dependence between the two occurences of $\delta$ in Theorem \ref{Thm: circumference} is correct. Fix $\delta > 0$. Let $G \cong F_{n-\left\lceil\left(\frac{2}{3}-\delta\right)n\right\rceil,\left\lceil\left(\frac{2}{3}-\delta\right)n\right\rceil}$. Then the monochromatic circumference of $G$ is at most $\left(\frac{2}{3}+2\delta\right)n$. However, $G$ contains no monochromatic cycle of length $\ell$ where $\ell$ is whichever of $\left\{\left\lceil\left(\frac{2}{3}-\delta\right)n\right\rceil+1,\left\lceil\left(\frac{2}{3}-\delta\right)n\right\rceil+2\right\}$ is odd.

In Section \ref{Sec: tools}, we will introduce some theorems that will be used in our proofs. We will then prove Theorem \ref{Thm: main result} in two parts. Section \ref{Sec: short odd cycles} will deal with short (up to constant length) cycles and Section \ref{Sec: regularity section} will deal with long cycles. This will rely on a number of lemmas, which are proved in Section \ref{Sec: proof of extremal lemma}. In Section \ref{Sec: Monochromatic circumference} we will look at the length of the longest monochromatic cycle, and in particular prove Theorem \ref{Thm: circumference}. We conclude in Section \ref{Sec: conclusion} with some open problems.

\section{Results used in the proof}
\label{Sec: tools}
In order to prove Theorem \ref{Thm: main result}, we shall use the common extremal graph theory method of the Regularity Lemma and Blow-up Lemmas to find long cycles. Before introducing these, we make some preliminary definitions.
\begin{defn}
Let $G$ be a graph and $X$ and $Y$ be disjoint subsets of $V(G)$. The \emph{density} of the graph $G[X,Y]$ is the value
\begin{displaymath}
d(X,Y):= \frac{e(X,Y)}{|X||Y|}.
\end{displaymath}
\end{defn}
We define a regular pair to be one where the density between not-too-small subgraphs of $X$ and $Y$ is close to the density between $X$ and $Y$.
\begin{defn}[Regularity]
Let $\epsilon>0$. Let $G$ be a graph and $X$ and $Y$ be disjoint subsets of $V(G)$. We call $(X,Y)$ an \emph{$\epsilon$-regular pair for $G$} if, for all $X' \subseteq X$ and $Y' \subseteq Y$ satisfying $|X'|\geq \epsilon |X|$ and $|Y'| \geq \epsilon |Y|$, we have
\begin{displaymath}
\left| d(X,Y) -d(X',Y') \right| < \epsilon.
\end{displaymath}
\end{defn}
It is often useful to have a bound on the degree of vertices in $X$ and $Y$.
\begin{defn}[Super-regularity]
Let $\epsilon,\delta>0$. Let $G$ be a graph and $X$ and $Y$ be disjoint subsets of $V(G)$. We call $(X,Y)$ an \emph{$(\epsilon,\delta)$-super-regular pair for $G$} if, for all $X' \subseteq X$ and $Y' \subseteq Y$ satisfying $|X'|\geq \epsilon |X|$ and $|Y'| \geq \epsilon |Y|$,
\begin{displaymath}
e(X',Y') > \delta|X'||Y'|,
\end{displaymath}
and furthermore, $d_{Y}(v) > \delta |Y|$ for all $v \in X$ and $d_{X}(v) > \delta |X|$ for all $v \in Y$.
\end{defn}
Note that a super-regular pair need not be regular, as the number of edges between subsets is only bounded below.

We will use the following $2$-coloured version of the Szemer\'{e}di Regularity Lemma \cite{Szemeredi1976} (see, for example, the survey paper of Koml\'{o}s and Simonovits \cite{Komlos1996} for an edge-coloured version).
\begin{thm}[Degree form of $2$-coloured Regularity Lemma]
\label{Thm: regularity}
For every $\epsilon >0$ there is an $M=M\left(\epsilon\right)$ such that if $G=(V,E)$ is any $2$-coloured graph and $d \in [0,1]$ is any real number, then there is $k \leq M$, a partition of the vertex set $V$ into $k+1$ clusters $V_{0},V_{1},\ldots,V_{k}$, and a subgraph $G'\subseteq G$ with the following properties:
\begin{itemize}
\item $|V_{0}| \leq \epsilon |V|$,
\item all clusters $V_{i}$, $i\geq 1$, are of the same size $m\leq \left\lceil\epsilon|V|\right\rceil$,
\item $d_{G'}\left(v\right) > d_{G}\left(v\right) - \left(2d+\epsilon\right) |V|$ for all $v \in V$,
\item $e\left(G'\left(V_{i}\right)\right)=0$ for all $i\geq 1$,
\item for all $1\leq i< j \leq k$, the pair $\left(V_{i},V_{j}\right)$ is $\epsilon$-regular for $R_{G'}$ with a density either $0$ or greater than $d$ and $\epsilon$-regular for $B_{G'}$ with a density either $0$ or greater than $d$, where $E(G')=E(R_{G'}) \cup E(B_{G'})$ is the induced $2$-edge colouring of $G'$.
\end{itemize}
\end{thm}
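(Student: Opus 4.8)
The proof has two stages: first establish the ``vanilla'' $2$-coloured Regularity Lemma (an equitable vertex partition in which all but an $\epsilon'$-fraction of the pairs are $\epsilon'$-regular in \emph{both} colours), and then clean this partition up into the degree form claimed. For the first stage the plan is to run Szemer\'edi's energy-increment argument \cite{Szemeredi1976} with a potential adapted to the colouring: for a partition $\mathcal{P}=\{V_{1},\dots,V_{k}\}$ (ignoring an exceptional piece) set
\[
q(\mathcal{P}) \;=\; q_{R}(\mathcal{P}) + q_{B}(\mathcal{P}), \qquad q_{c}(\mathcal{P}) \;=\; \sum_{i,j}\frac{|V_{i}||V_{j}|}{|V|^{2}}\,d_{c}(V_{i},V_{j})^{2} \quad (c\in\{R,B\}),
\]
which satisfies $0\le q(\mathcal{P})\le 2$ and, like its two summands, never decreases when $\mathcal{P}$ is refined. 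Call a pair $(V_{i},V_{j})$ \emph{bad} if it fails to be $\epsilon'$-regular for $R$ or fails to be $\epsilon'$-regular for $B$. If at least $\epsilon' k^{2}$ pairs are bad then, applying the defect form of the Cauchy--Schwarz inequality to the Szemer\'edi witnesses of the offending colour of each bad pair simultaneously, one obtains a refinement $\mathcal{P}'$ with $q(\mathcal{P}') \ge q(\mathcal{P}) + c(\epsilon')$ for an absolute constant $c(\epsilon')>0$ (the colour not used for a given bad pair only helps, since both $q_{R}$ and $q_{B}$ are monotone under refinement). As $q\le 2$, the process halts after $O(1/c(\epsilon'))$ steps, each step replacing the number of parts by at most an exponential function of it, so the final count is bounded by a tower function $M(\epsilon')$; a standard tidying step then makes the surviving clusters equitable with an exceptional set of size at most, say, $(\epsilon/3)|V|$, and by first refining a sufficiently fine initial partition we may take $k$ as large as we wish, in particular $m=|V_{i}|\le (\epsilon/3)|V|$.

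For the second stage fix $\epsilon'$ small compared with $\epsilon$, $d$ and (later) $1/k$, and take the partition $V_{1},\dots,V_{k}$ with exceptional set produced by the first stage. Since at most $\epsilon' k^{2}$ pairs are bad, at most $2\sqrt{\epsilon'}\,k$ clusters lie in more than $\sqrt{\epsilon'}\,k$ bad pairs; move all of these clusters into the exceptional set, which thereby grows by at most $2\sqrt{\epsilon'}|V|$ and so still has size at most $\epsilon|V|$. Relabel the surviving clusters $V_{1},\dots,V_{k}$ (each still of common size $m$) and call the exceptional set $V_{0}$. Now define $G'$ by deleting, for each colour $c\in\{R,B\}$, every edge of colour $c$ lying inside some $V_{i}$ with $i\ge 1$, together with every edge of colour $c$ joining clusters $V_{i},V_{j}$ (with $1\le i<j\le k$) such that $(V_{i},V_{j})$ is $\epsilon'$-irregular for $c$ or has $d_{c}(V_{i},V_{j})\le d$; all edges incident to $V_{0}$ are retained.

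It remains to check the four bullet points. Trivially $e(G'(V_{i}))=0$ for $i\ge 1$. For a pair $(V_{i},V_{j})$ with $i,j\ge 1$: if the colour-$c$ edges were kept then $(V_{i},V_{j})$ was $\epsilon'$-regular for $c$, hence $\epsilon$-regular, with $d_{c}>d$, and since $R_{G'}$ (resp.\ $B_{G'}$) agrees with $R_{G}$ (resp.\ $B_{G}$) on this pair, both regularity and the density survive to $G'$; if the colour-$c$ edges were deleted the colour-$c$ density is $0$ and the empty bipartite graph is vacuously $\epsilon$-regular. Finally fix $v\in V$: if $v\in V_{0}$ it loses no edges; if $v\in V_{i}$ with $i\ge 1$ it loses at most $m-1<m$ edges inside $V_{i}$, at most $\sqrt{\epsilon'}\,k\cdot m\le\sqrt{\epsilon'}|V|$ edges to clusters forming a bad pair with $V_{i}$ (as $V_{i}$ lies in at most $\sqrt{\epsilon'}k$ bad pairs), and, for each colour separately, at most $\sum_{j:\,d_{c}(V_{i},V_{j})\le d}|V_{j}|\le d|V|$ edges of that colour into low-density pairs; summing, $d_{G'}(v)>d_{G}(v)-(m+\sqrt{\epsilon'}|V|+2d|V|)>d_{G}(v)-(2d+\epsilon)|V|$ once $\epsilon'$ is small enough and $k\ge 3/\epsilon$.

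The main obstacle is the engine of the first stage: the defect-Cauchy--Schwarz step showing that $\epsilon'$-irregularity of even a single pair forces a bounded refinement which strictly increases the colour-index, and then that $\ge\epsilon' k^{2}$ bad pairs force a uniform increment $c(\epsilon')$ that does not wash out when the refinements for all bad pairs are performed at once. This is exactly the heart of Szemer\'edi's proof \cite{Szemeredi1976}; the two-colour version costs essentially nothing, because one works with the single potential $q_{R}+q_{B}$ and both summands are monotone under refinement, and an edge-coloured account is given in the survey of Koml\'os and Simonovits \cite{Komlos1996}. The second stage is routine bookkeeping; the only mildly delicate point there is that the degree bound is demanded at \emph{every} vertex rather than merely on average, which is precisely why the clusters lying in too many bad pairs are dumped into $V_{0}$ before $G'$ is formed.
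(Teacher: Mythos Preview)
The paper does not prove this theorem; it is quoted as a known tool, with a pointer to Szemer\'edi \cite{Szemeredi1976} and the Koml\'os--Simonovits survey \cite{Komlos1996} for the edge-coloured version. Your two-stage outline (energy increment with the potential $q_R+q_B$ for the vanilla multicoloured lemma, followed by edge deletion for the degree form) is precisely the standard derivation those references indicate, so there is no alternative argument in the paper to compare against.

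That said, there is a slip in your second stage. In bounding the edges a vertex $v\in V_i$ loses to low-density pairs you assert $\sum_{j:\,d_c(V_i,V_j)\le d}|V_j|\le d|V|$; this is false in general (if colour $c$ has no edges then every pair is low-density and the sum is essentially $|V|$). The trivial bound $d_c(v,V_j)\le |V_j|$ is too crude here. What is true is only the \emph{average} statement: summing over $v\in V_i$ gives $\sum_{v}\sum_{j:d_c\le d} d_c(v,V_j)=\sum_{j:d_c\le d} d_c(V_i,V_j)\,m^{2}\le d\,km^{2}\approx d\,m|V|$, so the mean loss per vertex of $V_i$ is at most about $d|V|$, but individual vertices can lose much more. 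The standard repair uses the $\epsilon'$-regularity of the surviving low-density pairs: for each such $(V_i,V_j)$ all but at most $\epsilon' m$ vertices of $V_i$ have colour-$c$ degree at most $(d+\epsilon')m$ into $V_j$; one then moves to $V_0$ the $O(\sqrt{\epsilon'})\,|V|$ vertices that are atypical for more than $\sqrt{\epsilon'}k$ pairs and re-equalises cluster sizes, after which your estimate $d_{G'}(v)>d_G(v)-(2d+\epsilon)|V|$ does go through. This extra vertex-removal step is exactly the ``mildly delicate point'' you allude to in your closing sentence, but the argument as written does not actually perform it.
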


Having applied the above form of the Regularity Lemma to a $2$-coloured graph $G$, we make the following definition, based on the clusters $\{V_{i}:1\leq i \leq k \}$. Note that this definition depends on the parameters $\epsilon$ and $d$.
\begin{defn}[Reduced graph]
We define a \emph{$(\epsilon,d)$-reduced $2$-edge coloured graph $H$} on vertex set $\{v_{i}:1\leq i \leq k\}$ as follows:
\begin{itemize}
\item let $\{v_{i},v_{j}\}$ be a blue edge of $H$ when $B_{G'}\left[V_{i},V_{j}\right]$ has density at least $d$;
\item let $\{v_{i},v_{j}\}$ be a red edge of $H$ when it is not a blue edge and $R_{G'}\left[V_{i},V_{j}\right]$ has density at least $d$.
\end{itemize}
\end{defn}
We aim to use subgraphs of the reduced graph $H$ to find subgraphs of $G$. To do so we will use the Embedding Lemma and the Blow-up Lemma of Koml\'{o}s, S\'{a}rk\"{o}zy and Szemer{\'{e}}di \cite{Komlos1997}.
\begin{thm}[Embedding Lemma]
\label{Thm: embedding lemma}
Given $d>\epsilon >0$, a graph $H$, and a positive integer $m$, let us construct a graph $G$ by replacing each vertex of $H$ with a set of order $m$, and replacing the edges of $H$ with $\epsilon$-regular pairs of density at least $d$. For a fixed integer $t$, let $H(t)$ be the graph defined by replacing each vertex of $H$ with a set of order $t$, and replacing the edges of $H$ with the complete bipartite graph.

Let $F$ be a subgraph of $H(t)$ with $f$ vertices and maximum degree $\Delta >0$, and let $\eta=d-\epsilon$ and $\epsilon_{0}=\eta^{\Delta}/\left(2+\Delta\right)$. If $\epsilon \leq \epsilon_{0}$ and $t-1 \leq \epsilon_{0} m$, then $F \subseteq G$, and in fact $G$ contains at least $(\epsilon_{0}m)^{f}$ vertex disjoint copies of $F$.
\end{thm}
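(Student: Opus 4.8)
The plan is to prove this by the classical greedy sequential embedding (the ``Key Lemma'' argument). Fix an arbitrary ordering $u_1,\dots,u_f$ of $V(F)$, and let $\varphi\colon V(F)\to V(H)$ record, for each vertex of $F\subseteq H(t)$, the vertex of $H$ whose blow-up fibre contains it; thus $|\varphi^{-1}(w)|\leq t$ for all $w\in V(H)$, and $\varphi$ is a graph homomorphism. Write $V_w$ for the $m$-element set of $G$ replacing $w\in V(H)$ (these are disjoint). I would embed $u_1,\dots,u_f$ into $G$ one vertex at a time, maintaining for each not-yet-embedded $u$ a \emph{candidate set} $C(u)\subseteq V_{\varphi(u)}$ consisting of the vertices of $G$ that are still unused and that are adjacent in $G$ to every image already chosen for an already-embedded $F$-neighbour of $u$. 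Initially $C(u)=V_{\varphi(u)}$, so $|C(u)|=m$.

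The only analytic input is the standard consequence of $\epsilon$-regularity, which I would isolate first: if $(X,Y)$ is an $\epsilon$-regular pair of density at least $d$ and $Y'\subseteq Y$ has $|Y'|\geq\epsilon|Y|$, then all but fewer than $\epsilon|X|$ vertices $x\in X$ satisfy $|N(x)\cap Y'|\geq\eta|Y'|$; otherwise the exceptional vertices would form a subset of $X$ of size at least $\epsilon|X|$ having density less than $\eta$ to $Y'$, contradicting $\epsilon$-regularity. Call an exceptional $x$ \emph{bad} for $Y'$. The second, purely bookkeeping, observation is that candidate sets shrink by a controlled factor: when an $F$-neighbour of $u$ is placed at a vertex $x$, the set $C(u)$ is replaced by $C(u)\cap N_G(x)$, and provided $|C(u)|\geq\epsilon m$ beforehand and $x$ was not bad for $C(u)$, the updated set has size at least $\eta$ times the previous size. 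Since $u$ has at most $\Delta$ neighbours in $F$, the set $C(u)$ is updated at most $\Delta$ times, so by induction on the number of updates one gets $|C(u)|\geq\eta^{\Delta}m$ at every moment: before a given update at most $\Delta-1$ earlier updates have occurred, hence $|C(u)|\geq\eta^{\Delta-1}m\geq\epsilon m$ (using $\epsilon\leq\epsilon_0=\eta^{\Delta}/(\Delta+2)\leq\eta^{\Delta-1}$, since $\eta\leq1$), so the regularity fact applies and the update loses at most a factor $\eta$.

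Now I run the embedding. Suppose $u_1,\dots,u_{i-1}$ are placed and we must choose an image for $u_i$; put $w=\varphi(u_i)$. By the previous paragraph $|C(u_i)|\geq\eta^{\Delta}m=(\Delta+2)\epsilon_0m$. Which vertices of $C(u_i)$ must we avoid? First, those already used as images and lying in $V_w$: these are exactly the images of the previously placed $u_\ell$ with $\varphi(u_\ell)=w$, and since $|\varphi^{-1}(w)|\leq t$ there are at most $t-1$ of them. Second, for each of the at most $\Delta$ not-yet-embedded $F$-neighbours $u_j$ of $u_i$, we must avoid the vertices of $V_w$ that are bad for $C(u_j)$ with respect to the $\epsilon$-regular pair $(V_w,V_{\varphi(u_j)})$; since $|C(u_j)|\geq\epsilon m$ is maintained, there are fewer than $\epsilon m$ such vertices for each $u_j$, hence fewer than $\Delta\epsilon m$ in total. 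So at most $(t-1)+\Delta\epsilon m\leq\epsilon_0m+\Delta\epsilon_0m=(\Delta+1)\epsilon_0m$ vertices are forbidden (using $t-1\leq\epsilon_0m$ and $\epsilon\leq\epsilon_0$), leaving at least $(\Delta+2)\epsilon_0m-(\Delta+1)\epsilon_0m=\epsilon_0m$ admissible images. Pick one, update the candidate sets of the remaining $F$-neighbours of $u_i$, and continue; the embedding runs to completion, so $F\subseteq G$. Since each of the $f$ steps offered at least $\epsilon_0m$ admissible choices, multiplying over the steps shows $G$ contains at least $(\epsilon_0m)^f$ copies of $F$ (this counts labelled embeddings of $F$, which is the bound the greedy argument delivers), as claimed.

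There is no real obstacle beyond keeping the constants aligned: the one thing that must be checked throughout is that no candidate set ever falls below the regularity threshold $\epsilon m$. That requirement is exactly why $\epsilon_0$ is defined as $\eta^{\Delta}/(\Delta+2)$ and why the hypothesis reads $t-1\leq\epsilon_0m$ — the factor $\eta^{\Delta}$ pays for the at most $\Delta$ shrinkings a candidate set can suffer, and the ``$+2$'' in the denominator supplies precisely the slack needed to absorb the $\leq t-1$ already-used vertices and still leave $\epsilon_0m$ choices at each step.
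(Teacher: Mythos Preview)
The paper does not prove this statement: the Embedding Lemma is quoted in Section~\ref{Sec: tools} as a standard tool (attributed alongside the Blow-up Lemma to Koml\'{o}s, S\'{a}rk\"{o}zy and Szemer\'{e}di) and is used without proof. Your argument is the classical greedy embedding proof of the Key Lemma, and the bookkeeping is carried out correctly: the bound $|C(u_i)|\geq\eta^{\Delta}m=(\Delta+2)\epsilon_0 m$, the count of at most $(t-1)+\Delta\epsilon m\leq(\Delta+1)\epsilon_0 m$ forbidden vertices, and the verification that candidate sets never drop below the regularity threshold $\epsilon m$ are all sound.

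One remark worth flagging: the statement as written claims $(\epsilon_0 m)^f$ \emph{vertex disjoint} copies of $F$, which cannot be literally correct for $f\geq 2$ (that many disjoint copies would exceed $|V(G)|$). You are right to note parenthetically that the greedy argument actually counts labelled embeddings; this is the standard conclusion of the Key Lemma, and the phrase ``vertex disjoint'' in the quoted statement is almost certainly a slip for ``labelled'' (or simply ``copies'').
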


Note that in the Embedding Lemma, the graphs $F$ we embed into $G$ have order at most $t|H|=\frac{t}{m}|G|$. We will need to embed much larger graphs into $G$; for this we will need the Blow-up Lemma. Note that here we consider \emph{super}-regular pairs.

\begin{thm}[Blow-up Lemma]
\label{Thm: blow-up lemma}
Given a graph $H$ of order $r$ and positive parameters $\delta$, $\Delta$ and $c$, there exist positive numbers $\epsilon=\epsilon\left(\delta,\Delta,r,c\right)$ and $\alpha = \alpha\left(\delta,\Delta,r,c\right)$ such that the following holds. Let $t$ be an arbitary positive integer, and replace the vertices $v_{1},\ldots ,v_{r}$ of $H$ with pairwise disjoint sets $V_{1},\ldots,V_{r}$ of order $t$. We construct two graphs on the same vertex set $V=\bigcup V_{i}$. The first graph $G_{1}$ is obtained by replacing each edge $\{v_{i},v_{j}\}$ of $H$ with the complete bipartite graph between the corresponding vertex sets $V_{i}$ and $V_{j}$. A sparser graph $G_{2}$ is constructed by replacing each edge $\{v_{i},v_{j}\}$ arbitarily with an $\left(\epsilon,\delta\right)$-super-regular pair between $V_{i}$ and $V_{j}$. If a graph $F$ with $\Delta\left(F\right) \leq \Delta$ is embeddable into $G_{1}$ then it is also embeddable into $G_{2}$. This remains true even if for every $i$ there are certain vertices $x$ to be embedded into $V_{i}$ whose images are \textit{a priori} restricted to certain sets $C_{x} \subseteq V_{i}$, provided that:
\begin{enumerate}[(i)]
\item each $C_{x}$ is of order at least $ct$;
\item the number of such restrictions within a set $V_{i}$ is not more than $\alpha t$.
\end{enumerate}
\end{thm}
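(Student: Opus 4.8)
I would prove this following the randomised greedy embedding scheme of Komlós, Sárközy and Szemerédi. Since $\Delta(F)\le\Delta$ and $F$ embeds into $G_1$, there is an \emph{$H$-partition} of $F$: a partition $V(F)=X_1\cup\dots\cup X_r$ with $|X_i|=t$ for each $i$, each $X_i$ independent in $F$, and every edge of $F$ joining $X_i$ to $X_j$ only when $v_iv_j\in E(H)$; a restricted vertex $x$ lies in $X_{\phi(x)}$ with prescribed set $C_x\subseteq V_{\phi(x)}$, $|C_x|\ge ct$. The problem becomes: choose bijections $\psi_i:X_i\to V_i$ which together embed $F$ into $G_2$ with $\psi_{\phi(x)}(x)\in C_x$ for each restricted $x$. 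The vertices are embedded one at a time, and for each not-yet-embedded $x\in X_i$ I maintain its \emph{available set} $A(x)\subseteq V_i$ of slots that are still free, lie in $C_x$ if $x$ is restricted, and are adjacent --- in the relevant $(\epsilon,\delta)$-super-regular pair --- to the images already assigned to the embedded neighbours of $x$; initially $|A(x)|\ge ct$. There are two phases: a main phase embedding all but a small \emph{buffer}, and an endgame embedding the buffer by perfect matchings.

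\textbf{A local consequence of super-regularity, and the buffer.} The key quantitative input is: if $(V_i,V_j)$ is $(\epsilon,\delta)$-super-regular, $A\subseteq V_i$ has $|A|\ge\epsilon t$ and $\delta'<\delta$, then at most $\epsilon t$ vertices $w\in V_j$ have $|N(w)\cap A|<\delta'|A|$ --- else the set $B$ of such vertices would, as $|B|\ge\epsilon t$, satisfy $e(B,A)>\delta|B||A|$ by super-regularity while $e(B,A)=\sum_{w\in B}|N(w)\cap A|<\delta'|B||A|$. Fix $\delta'<\delta$ and $\epsilon'=(\delta')^{\Delta}c$, and maintain the invariant $|A(x)|\ge\epsilon' t$. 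When we embed $w\in X_j$, the union over its (at most $\Delta$) un-embedded neighbours $x$ of the ``bad'' sets $\{u\in V_j:|N(u)\cap A(x)|<\delta'|A(x)|\}$ has size $<\Delta\epsilon t$; choosing $\epsilon<\epsilon'/(2\Delta)$ we may always place $\psi(w)$ outside all of these, so that embedding $w$ multiplies each neighbouring available set by at least $\delta'$. Hence once all of the $d_F(x)\le\Delta$ neighbours of $x$ are embedded, $|A(x)|\ge(\delta')^{\Delta}ct=\epsilon' t$: the invariant is self-sustaining \emph{provided a free such position for $w$ always exists}, which is the only remaining point. Finally I fix a buffer: a set $Y_i\subseteq X_i$ with $|Y_i|=\rho t$ for small $\rho=\rho(\delta,\Delta,r,c)$, chosen greedily so that $\bigcup_iY_i$ is independent in $F$, which is possible since each $X_i$ is $F$-independent and $\Delta,r$ are bounded.

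\textbf{Main phase and concentration.} Embed the non-buffer vertices in a fixed order, each time picking $\psi(w)$ uniformly at random among the free positions of its current available set that are good for all un-embedded neighbours. The sequence of choices is a martingale, and the number of free good positions available to any fixed future vertex changes by a bounded amount per step, so Azuma's inequality gives \emph{exponentially} small deviation probabilities. The point to establish is that for every $w$, at the moment it is embedded, the set $A(w)$ minus the bad sets of its un-embedded neighbours --- which has size $\ge(\epsilon'-\Delta\epsilon)t$, a fixed positive fraction of $V_j$ --- still contains a free slot, indeed $\Omega(t)$ of them. This holds because the already-used slots of $V_j$ stay quasirandomly distributed, so their intersection with any fixed linear-sized subset is concentrated near its expectation, at most $(1-\rho)$ times the subset size. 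A union bound over the $\le rt$ vertices and $\le rt$ steps, each bad event having probability $e^{-\Omega(t)}$, then shows the main phase succeeds with probability $1-o(1)$, and here the dependence of $\epsilon,\alpha$ on $\delta,\Delta,r,c$ is fixed. Buffer vertices are embedded last, so afterwards each $V_i$ has exactly $\rho t$ free slots, each $y\in Y_i$ has all neighbours embedded with $|A(y)|\ge\epsilon' t$, and $|A(y)\cap\mathrm{free}(V_i)|=\Omega(t)$.

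\textbf{Endgame, and the main obstacle.} Since $\bigcup_iY_i$ is independent in $F$, the remaining task decomposes over parts: for each $i$ find a perfect matching in the bipartite graph between $Y_i$ and the $\rho t$ free slots of $V_i$, with $y$ joined to slots in $A(y)$. I verify Hall's condition --- for a small $S\subseteq Y_i$, one $A(y)$ already meets $\mathrm{free}(V_i)$ in $\Omega(t)\ge|S|$ slots; for a large $S$, the local estimate above shows at most $\Delta\epsilon t$ buffer vertices can have available sets all avoiding a fixed $\epsilon t$-subset of $V_i$, so $\bigcup_{y\in S}A(y)$ misses at most $\epsilon t$ of $V_i$ and, against the quasirandom free set, $|\bigcup_{y\in S}A(y)\cap\mathrm{free}(V_i)|\ge(\rho-o(1))t\ge|S|$ --- and apply König's theorem. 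This completes the embedding with positive probability, which suffices. The main obstacle is the concentration analysis of the main phase: one must keep, \emph{simultaneously and throughout a long random process}, a free good position available to every future vertex, which forces the used slots to stay quasirandom with respect to the process-dependent available sets; controlling that correlation, and threading $\epsilon'$, $\rho$, $\epsilon$ and $\alpha$ so that all main-phase invariants and all endgame Hall conditions hold at once, is the technical heart of the argument.
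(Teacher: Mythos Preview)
The paper does not prove this statement: the Blow-up Lemma is quoted from Koml\'os, S\'ark\"ozy and Szemer\'edi \cite{Komlos1997} in Section~\ref{Sec: tools} and used throughout as a black-box tool, so there is no proof in the paper to compare your attempt against.

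That said, your outline is a recognisable sketch of the original Koml\'os--S\'ark\"ozy--Szemer\'edi argument: randomised greedy embedding maintaining available sets $A(x)$, a buffer held back for an endgame perfect matching via Hall's condition, and martingale concentration in between. One point worth sharpening if you were to write this out in full: in the actual KSS proof the main phase is not a pure random greedy process but interleaves a \emph{queue} mechanism --- a vertex whose available set threatens to become too small is promoted and embedded immediately, and the concentration argument is used to bound the \emph{number} of such queue insertions rather than to show that none occur. Your sketch asserts that concentration alone keeps a free good slot in every $A(w)$ throughout, but without the queue the dependence between the random choices and the process-dependent sets $A(x)$ makes a direct Azuma application delicate; this is exactly where the full proof spends most of its effort, and you are right to flag it as the main obstacle.
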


In the proof of Theorem \ref{Thm: main result}, we shall frequently show that there is a subset $S$ of $V$ on which one of $R_{G}$ or $B_{G}$ is hamiltonian, and apply Theorem \ref{Thm: bondy}. To prove hamiltonicity, it will normally be sufficient to use Dirac's Theorem (Theorem \ref{Thm: Dirac}). However, we will also need the following generalisation.

\begin{thm}[Chv\'{a}tal \cite{Chvatal1972}]
\label{Thm: chvatal}
Let $G$ be a graph of order $n\geq3$ with degree sequence $d_{1} \leq d_{2} \leq \cdots \leq d_{n}$ such that
\begin{displaymath}
d_{k} \leq k < \frac{1}{2}n \Rightarrow d_{n-k} \geq n-k.
\end{displaymath}
Then $G$ contains a Hamilton cycle.
\end{thm}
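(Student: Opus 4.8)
The plan is to pass to the reduced graph of the Regularity Lemma, reduce the theorem to a self-contained statement about $2$-coloured graphs of minimum degree close to $\tfrac34 k$, and prove that statement by a degree count that exploits the precise threshold $\tfrac34$. First, apply Theorem~\ref{Thm: regularity} with constants $\varepsilon\ll d\ll\delta$ to obtain an $(\varepsilon,d)$-reduced $2$-edge coloured graph $H$ on $k$ vertices, $k$ bounded, each cluster of size $m=(1-o(1))n/k$, with $\delta(H)\geq(\tfrac34-\gamma)k$ for some $\gamma=\gamma(\varepsilon,d)\to 0$. I will use two standard consequences of Theorems~\ref{Thm: embedding lemma} and~\ref{Thm: blow-up lemma}. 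First: if a colour $c$ contains a \emph{connected matching} (a matching whose edges all lie in one component of colour $c$) covering $M$ vertices of $H$, then $G$ contains a cycle of colour $c$ of length at least $(1-o(1))Mm$. Second: if moreover that component of colour $c$ contains a triangle, then colour $c$ of $G$ contains $C_\ell$ for every $\ell\in[3,(1-o(1))Mm]$, the short cycles coming from the Embedding Lemma applied to the triangle together with short connecting paths, the long ones from the Blow-up Lemma applied along the connected matching after a cleaning step making the relevant pairs super-regular of equal size. Since $(1-o(1))(\tfrac23+\tfrac34\delta)km\geq(\tfrac23+\tfrac\delta2)n$ and $(1-o(1))(\tfrac23-\tfrac34\delta)km\geq(\tfrac23-\delta)n$ for $n$ large, it suffices to prove the following.

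\textbf{Claim.} If $\delta(H)\geq(\tfrac34-\gamma)k$ with $\gamma$ small enough, then either some colour of $H$ contains a connected matching covering more than $(\tfrac23+\tfrac34\delta)k$ vertices, or some colour of $H$ contains a non-bipartite component that contains a triangle and a matching covering at least $(\tfrac23-\tfrac34\delta)k$ vertices.

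\textbf{Proof of the Claim.} Suppose both alternatives fail. Then every monochromatic connected matching covers at most $(\tfrac23+\tfrac34\delta)k$ vertices, so every monochromatic bipartite component has smaller side of size at most $(\tfrac13+\tfrac38\delta)k$; and, since a sufficiently large non-bipartite dense component contains a triangle, every monochromatic non-bipartite component has at most $(\tfrac23-\tfrac34\delta+o(1))k$ vertices. As $\deg_R(v)+\deg_B(v)\geq(\tfrac34-\gamma)k$ for all $v$, at least half the vertices have red-degree $\geq(\tfrac38-\tfrac\gamma2)k$; let $X$ be such a set, $|X|\geq k/2$. All red edges at a vertex $v\in X$ lie in $v$'s red component, which is either small or bipartite with tiny smaller side, so all but $O(\delta k)+o(k)$ of the vertices of $X$ lie on the large side of a bipartite red component and hence have red-degree $\leq(\tfrac13+\tfrac38\delta)k$ and thus blue-degree $\geq(\tfrac5{12}-\tfrac38\delta-\gamma)k$. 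Tracking this degree surplus over a suitable set $Y$ of size close to $(\tfrac23-O(\delta))k$ and feeding $Y$ into the blue graph produces either a blue connected matching covering more than $(\tfrac23+\tfrac34\delta)k$ vertices, or a non-bipartite blue component on more than $(\tfrac23-\tfrac34\delta)k$ vertices --- either way a contradiction. Optimising the resulting chain of inequalities is exactly what forces the hypothesis $\delta\leq\tfrac1{180}$. Given the Claim, the first alternative yields the required long monochromatic cycle via the first blow-up consequence, and the second alternative yields the required full range of monochromatic cycle lengths via the second, completing the proof of the theorem.

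\textbf{Main obstacle.} The difficulty is entirely in the Claim: managing the $o(1)$ and $O(\delta)$ error terms through the case analysis on monochromatic components (bipartite with tiny versus large smaller side, against small non-bipartite ones), and in particular guaranteeing that the non-bipartite component finally obtained is dense enough to contain a triangle, so that the short odd cycles $C_3, C_5, \dots$ demanded by the second conclusion really are produced and not just some odd cycle of large length. A more routine technical point is making the two blow-up consequences precise --- reaching \emph{every} length in $[3,(1-o(1))Mm]$ by combining the Embedding Lemma for the bounded-length cycles with the Blow-up Lemma for the linear-length ones, after equalising cluster sizes and super-regularising along the connected matching.
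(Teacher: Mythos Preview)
Your proposal does not address the stated theorem. The statement is Chv\'atal's classical sufficient condition for hamiltonicity: a graph on $n\geq 3$ vertices with degree sequence $d_1\leq\cdots\leq d_n$ satisfying $d_k\leq k<\tfrac{n}{2}\Rightarrow d_{n-k}\geq n-k$ contains a Hamilton cycle. This is a result about a single uncoloured graph, with no $2$-colouring, no minimum degree $\tfrac34 n$, no monochromatic cycles, and no Regularity Lemma in sight. The paper does not prove it at all; it is quoted from \cite{Chvatal1972} as a tool and invoked later (for instance to show that $B_G[X_0]$ and $R[C_1']$ are hamiltonian).

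What you have written is instead a sketch of a proof of Theorem~\ref{Thm: circumference} (the monochromatic circumference result), complete with the reduced graph $H$, connected matchings, the Blow-up Lemma, and the bound $\delta\leq\tfrac{1}{180}$. None of this machinery is relevant to Chv\'atal's theorem, whose standard proof is a short direct argument: assume $G$ is edge-maximal non-hamiltonian, take a longest path, and derive a contradiction to the degree-sequence hypothesis from the structure of non-adjacencies at the endpoints. So the gap is not a missing step but a wholesale mismatch between the statement and the argument; you have attempted the wrong theorem.
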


In Section \ref{Sec: proof of extremal lemma}, we will need to use Tutte's $1$-factor Theorem \cite{Tutte1947}, for which we need the following definition.
\begin{defn}
For any graph $G$, let \emph{$q(G)$} be the number of components of $G$ of odd order.
\end{defn}
In fact we shall use the following defect version of Tutte's Theorem, noted by Berge \cite{Berge1958}.
\begin{thm}
\label{Thm: Tutte}
A graph $G$ contains a set of independent edges covering all but at most $d$ of the vertices if, and only if
\begin{displaymath}
 q(G-S) \leq |S| +d
\end{displaymath}
for all $S \subseteq V$.
\end{thm}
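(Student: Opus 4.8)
The plan is to derive this defect form from Tutte's $1$-factor theorem by the standard trick of adjoining a universal clique of $d$ new vertices, preceded by a short parity reduction that converts ``defect $d$'' into ``perfect matching''.

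First I would dispose of necessity. Suppose $M$ is a set of independent edges of $G$ missing exactly a vertex set $U$ with $|U|\le d$, and fix $S\subseteq V$. For each odd component $C$ of $G-S$, the edges of $M$ lying inside $C$ cannot saturate $C$ (it has odd order), so some $x_{C}\in V(C)$ is either unmatched, hence in $U$, or matched by $M$ to a vertex outside $C$; in the latter case that vertex lies in $S$, since $C$ is a component of $G-S$ and an edge from $C$ to another component is impossible. The vertices $x_{C}$ are distinct for distinct $C$; at most $|U|\le d$ of them lie in $U$, and the remaining ones are matched into $S$ along edges ending at distinct vertices of $S$ (a vertex of $S$ lies in at most one edge of $M$), so there are at most $|S|$ of them. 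Hence $q(G-S)\le |S|+d$.

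For sufficiency I would first record that $q(H)\equiv |V(H)|\pmod 2$ for every graph $H$ (even-order components contribute $0$, odd-order ones $1$, to $|V(H)|$ mod $2$), so $q(G-S)-|S|\equiv |V(G)|\pmod 2$ for all $S$. Consequently, if $|V(G)|\not\equiv d\pmod 2$ then both the hypothesis and the desired conclusion are unchanged on replacing $d$ by $d-1$; and if this makes $d$ negative then $S=\emptyset$ already violates the hypothesis, since $|V(G)|$ is odd and so $q(G)\ge 1>0$. Hence I may assume $|V(G)|+d$ is even. Form $G^{*}$ from $G$ by adding a set $D$ of $d$ new vertices, each joined to all other vertices of $G^{*}$ (so $D$ is a clique, universal in $G^{*}$); then $|V(G^{*})|$ is even, and $G^{*}$ is connected whenever $d\ge 1$. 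I claim $G^{*}$ satisfies Tutte's condition. Let $S^{*}\subseteq V(G^{*})$. If $D\subseteq S^{*}$, write $S^{*}=D\cup S$ with $S\subseteq V(G)$; then $G^{*}-S^{*}=G-S$, so $q(G^{*}-S^{*})=q(G-S)\le |S|+d=|S^{*}|$ by hypothesis. If $D\not\subseteq S^{*}$ (so $d\ge 1$), any $w\in D\setminus S^{*}$ is adjacent to all of $V(G^{*})\setminus S^{*}$, so $G^{*}-S^{*}$ is connected and $q(G^{*}-S^{*})\le 1\le |S^{*}|$, the case $S^{*}=\emptyset$ being fine since then $q(G^{*})=0$. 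By Tutte's $1$-factor theorem $G^{*}$ has a perfect matching $M^{*}$. Writing $a$ for the number of edges of $M^{*}$ inside $D$ and $b$ for the number between $D$ and $V(G)$, we have $2a+b=d$ since $M^{*}$ saturates $D$, so $b\le d$. Deleting these $a+b$ edges leaves a matching $M=M^{*}\cap E(G)$ of $G$ that misses exactly the $b\le d$ vertices of $V(G)$ that were covered by $D$--$V(G)$ edges of $M^{*}$. Thus $M$ covers all but at most $d$ vertices of $G$, as required.

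The only genuinely fiddly point is the parity reduction: without it one cannot force $|V(G^{*})|$ even and hence apply the $1$-factor theorem cleanly. The rest is the routine universal-clique argument, and the choice $|D|=d$ is exactly what makes the odd-component count of $G^{*}-S^{*}$ collapse onto that of $G-S$ in the case $D\subseteq S^{*}$, while the universality of $D$ kills all other cases.
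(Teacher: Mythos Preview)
Your proof is correct and is the standard derivation of the Berge defect formula from Tutte's $1$-factor theorem via the universal-clique construction; both the necessity argument and the parity reduction are handled cleanly. Note, however, that the paper does not give its own proof of this statement: it is simply quoted as a classical result attributed to Berge, so there is no in-paper proof to compare against.
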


The next result of Bollob\'{a}s \cite[p.150]{bollobasextremalgraphtheory} will be used in Section \ref{Sec: short odd cycles}, as will the three following results.
\begin{thm}[Bollob\'{a}s \cite{bollobasextremalgraphtheory}]
\label{Thm: bollobas}
If $G$ is a graph of order $n$, with $e(G)>\frac{1}{4}n^{2}$, then $G$ contains $C_{k}$ for all $k \in \left[3, \left\lceil \frac{n}{2} \right\rceil \right]$.
\end{thm}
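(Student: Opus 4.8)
The plan is to prove this by induction on $n$, separating a dense/high-degree case handled by Dirac's and Bondy's theorems from a sparse case handled by deleting a low-degree vertex. For small $n$ the interval $[3,\lceil n/2\rceil]$ is empty or forces only a bounded number of short cycles, which follow from the Tur\'{a}n-type bound $\mathrm{ex}(n,C_{3})=\lfloor n^{2}/4\rfloor$ together with the analogous fact that a sufficiently dense graph contains a $C_{4}$, so I assume $n$ is large and that the statement holds for all smaller orders.

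Suppose first that $G$ satisfies the hypothesis of Chv\'{a}tal's theorem (Theorem~\ref{Thm: chvatal}); in particular this holds whenever $\delta(G)\geq \tfrac12 n$, by Theorem~\ref{Thm: Dirac}. Then $G$ is hamiltonian, and since $e(G)>\tfrac14 n^{2}$ the graph $G$ cannot be the exceptional complete bipartite graph in Theorem~\ref{Thm: bondy}, so $G$ is pancyclic and contains $C_{\ell}$ for every $\ell\in[3,n]\supseteq[3,\lceil n/2\rceil]$. Otherwise the contrapositive of Chv\'{a}tal's theorem yields a vertex $v$ with $d(v)\leq \lceil n/2\rceil-1$; put $G'=G-v$. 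An elementary computation gives $e(G')=e(G)-d(v)>\tfrac14 n^{2}-\bigl(\lceil n/2\rceil-1\bigr)>\tfrac14(n-1)^{2}$, so by the inductive hypothesis $G'$, and hence $G$, contains $C_{\ell}$ for all $\ell\in[3,\lceil (n-1)/2\rceil]$. When $n$ is even we have $\lceil (n-1)/2\rceil=\lceil n/2\rceil$ and we are done.

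The remaining, and most delicate, point is the deletion step when $n$ is odd, where $\lceil(n-1)/2\rceil=\lceil n/2\rceil-1$: the induction then supplies every required cycle except the longest one, $C_{\lceil n/2\rceil}$, and this single length has to be recovered separately. The difficulty is quantitative — deleting a vertex can cost close to $\tfrac12 n$ edges, which is essentially exactly the amount the density threshold $\tfrac14(\mathrm{order})^{2}$ can absorb, so there is almost no slack. To recover $C_{\lceil n/2\rceil}$ I would track the edge surplus carefully, using that for $n$ odd one in fact has $e(G')\geq \lfloor (n-1)^{2}/4\rfloor+1$, and then either re-run the case split one more step (landing either in the hamiltonian--pancyclic case, which already produces all cycle lengths, or in a subgraph whose edge count beats the relevant threshold by a margin large enough to invoke a suitably strengthened inductive statement reaching length $\lceil n/2\rceil$), or alternatively exhibit directly a cycle $C$ with $|V(C)|\geq\lceil n/2\rceil$ for which $G[V(C)]$ has at least $|V(C)|^{2}/4$ edges and apply Theorem~\ref{Thm: bondy} to $G[V(C)]$ (the strict edge inequality again ruling out the complete bipartite exception). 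The cases to watch throughout this last step are the near-extremal non-hamiltonian dense graphs, essentially of the shape $K_{k}+\overline{K_{n-k}}$; one checks that these too contain $C_{\ell}$ for every $\ell\leq\lceil n/2\rceil$, so they present only bookkeeping, not a genuine counterexample.
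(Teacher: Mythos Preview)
The paper does not prove this theorem at all: it is quoted as a known result of Bollob\'{a}s (with a page reference to his book) and used as a black box. So there is no ``paper's own proof'' to compare against; what remains is to assess whether your argument stands on its own.

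Your inductive scheme is the natural one, and the even-$n$ step is clean: if $\delta(G)\geq n/2$ then Dirac plus Bondy give pancyclicity (the strict inequality $e(G)>n^{2}/4$ rules out $K_{n/2,n/2}$), and otherwise deleting a vertex of degree at most $\lceil n/2\rceil-1$ preserves the edge hypothesis for $G'$, while $\lceil(n-1)/2\rceil=\lceil n/2\rceil$ when $n$ is even. (Invoking Chv\'{a}tal here is harmless but unnecessary; the split on $\delta(G)$ suffices.)

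The odd-$n$ case, however, is a genuine gap that your sketch does not close. You need $C_{(n+1)/2}$, and none of the three suggested routes actually produces it. ``Re-running the case split one more step'' passes from order $n-1$ (even) to order $n-2$ (odd), with $\lceil(n-2)/2\rceil=(n-1)/2$, still one short. Appealing to ``a suitably strengthened inductive statement reaching length $\lceil n/2\rceil$'' is not a proof until you state the strengthening and verify it is preserved under vertex deletion; the difficulty is precisely that the edge surplus after deletion is only $+1$ over $\lfloor(n-1)^{2}/4\rfloor$, which buys a triangle but not obviously a cycle of length $(n+1)/2$. Finally, ``exhibit directly a cycle $C$ with $|V(C)|\geq\lceil n/2\rceil$ and $e(G[V(C)])\geq|V(C)|^{2}/4$'' is exactly the hard part --- nothing in the hypotheses guarantees such a cycle exists, and in the near-extremal examples you mention the longest cycle need not carry that many induced edges. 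As written, the argument proves the theorem with $\lfloor n/2\rfloor$ in place of $\lceil n/2\rceil$, but the last cycle length for odd $n$ is not established.
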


\begin{thm}[Bondy and Simonovits \cite{Bondy1974}]
\label{Thm: bondy/simonovits}
Let $G$ be a graph of order $n$ and let $k$ be an integer. If $e(G)>100kn^{1+1/k}$, then $G$ contains a cycle of length $2k$.
\end{thm}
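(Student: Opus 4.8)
The plan is to follow the breadth-first-search argument of Bondy and Simonovits. First I would replace $G$ by a subgraph of large minimum degree: repeatedly deleting a vertex whose degree is at most the current ratio of edges to vertices leaves that ratio non-decreasing, so the process terminates at a non-empty subgraph $G'$ with $\delta(G') > e(G)/n > 100kn^{1/k} =: d$ and $|V(G')| \le n$. Passing to a component I may assume $G'$ is connected, and --- losing only a constant factor in the minimum degree, which is harmless since it remains far larger than $kn^{1/k}$ --- I may even pass to a bipartite subgraph, so that all paths between a fixed pair of vertices share a parity. It then suffices to show that such a $G'$ contains $C_{2k}$.

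Next I would fix a root $r \in V(G')$ and run a BFS, producing layers $L_0 = \{r\}, L_1, L_2, \dots$ and a BFS tree $T$; every edge of $G'$ lies inside a layer or between consecutive layers, and two vertices lying in distinct subtrees below a common ancestor $w$ have internally disjoint $T$-paths to $w$. Suppose, for contradiction, that $G'$ has no $C_{2k}$. A non-tree edge $uv$ with $u$ in layer $i$ and whose $T$-path first meets that of $v$ at layer $j$ closes a cycle of length roughly $2(i-j)$; a $C_{2k}$ would therefore appear the moment some non-tree edge had its meeting point exactly $k$ levels above its lower endpoint. The argument shows that forbidding all such edges forces the BFS layers to expand very fast. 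Concretely, I would track, for each vertex and each feasible length $\ell$, whether there is an $r$-path to it of length $\ell$ obtained by descending in $T$ together with a bounded number of short, controlled detours; then, double counting the many edges leaving each of the first $k$ layers, I would show these ``reachable-length'' sets must be large and must grow by a factor close to $d$ at every step, forcing $|L_k| > n \ge |V(G')|$ --- a contradiction. The constant $100k$ is calibrated so that the per-step loss, which is only $O(k)$, is comfortably absorbed.

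The main obstacle --- and the reason this is not a one-line counting argument --- is that the hypothesis forbids only cycles of length exactly $2k$, not all short even cycles; short even cycles are allowed and could in principle prevent the BFS layers from growing. Hence one cannot simply argue ``either the layers expand geometrically or a short even cycle appears''. The real work, and the delicate part, lies in the length bookkeeping: showing that whenever a layer fails to expand, the surplus of edges within or between a bounded block of layers produces two internally disjoint paths whose lengths sum to precisely $2k$ (and, if one has not first passed to a bipartite subgraph, keeping track of parities throughout). The degree reduction, the tree structure, and the final overflow estimate are routine by comparison.
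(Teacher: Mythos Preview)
The paper does not give its own proof of this statement: Theorem~\ref{Thm: bondy/simonovits} is quoted from Bondy and Simonovits \cite{Bondy1974} purely as a black-box tool in Section~\ref{Sec: tools}, so there is no in-paper argument to compare your attempt against.

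That said, your outline is broadly faithful to the original BFS strategy of \cite{Bondy1974}. The reduction to a subgraph of large minimum degree, the optional passage to a bipartite subgraph, and the layering from a root are exactly the standard opening moves, and your final paragraph correctly isolates the real difficulty: one must exploit the absence of $C_{2k}$ specifically, not of all short even cycles. Where your sketch becomes hand-wavy is the mechanism you call ``reachable-length sets''. In the actual Bondy--Simonovits argument the work is done by a colouring lemma: if a graph carries a non-trivial vertex $2$-colouring in which every path of a prescribed length $t$ joins differently coloured endpoints, then it has at most $(t-1)|V|$ edges. One applies this to the bipartite graph between consecutive BFS layers $L_i$ and $L_{i+1}$, colouring by ancestry in the BFS tree at an appropriate earlier level; absence of a $C_{2k}$ forces the colouring hypothesis, hence sparsity between layers, hence geometric growth of $|L_i|$ and the overflow $|L_k|>n$. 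Your description (``two internally disjoint paths whose lengths sum to precisely $2k$'') points at the right phenomenon but would need to be sharpened into a lemma of this type to become a proof; as written, the step that converts ``a layer failed to expand'' into ``a $C_{2k}$ exists'' is asserted rather than argued.
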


\begin{thm}[Erd\H{o}s and Gallai \cite{Erdos1959}]
\label{Thm: erdos-gallai}
If $G$ is a graph with order $n$ and circumference at most $L$, then $e(G) \leq \frac{1}{2}\left(n-1\right)L$. If $G$ is a graph with order $n$ with no paths of length at least $L+1$, then $e(G) \leq \frac{1}{2}nL$.
\end{thm}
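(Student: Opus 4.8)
The plan is to establish both inequalities by induction on $n$, in each case first passing to connected graphs: if $G$ has components $G_{1},\dots,G_{k}$ with $k\geq 2$, then each $G_{i}$ inherits the hypothesis and has fewer than $n$ vertices, so it is enough to sum the inductive bounds over the components — for the circumference bound this gives $e(G)=\sum_{i}e(G_{i})\leq\frac12 L\sum_{i}(|G_{i}|-1)=\frac12 L(n-k)\leq\frac12 L(n-1)$, and for the path bound it gives $e(G)\leq\frac12 L\sum_{i}|G_{i}|=\frac12 Ln$. Write $\delta=\delta(G)$ throughout. (For the first statement one may clearly assume $L\geq 2$.)

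For the second statement (no path with $L+1$ edges, equivalently every path has at most $L+1$ vertices), suppose $G$ is connected. If $\delta\leq L/2$, I would delete a vertex $v$ of minimum degree; since $G-v$ still has no long path, induction gives $e(G-v)\leq\frac12 L(n-1)$, and hence $e(G)=e(G-v)+d(v)\leq\frac12 L(n-1)+\frac12 L=\frac12 Ln$. If instead $\delta>L/2$, take a longest path $P=v_{0}v_{1}\cdots v_{\ell}$; by maximality every neighbour of $v_{0}$ and of $v_{\ell}$ lies on $P$, so $\delta\leq\ell\leq L<2\delta$. Since there are at least $\delta$ indices $i\in\{0,\dots,\ell-1\}$ with $v_{0}v_{i+1}\in E(G)$ and at least $\delta$ such indices with $v_{\ell}v_{i}\in E(G)$, pigeonhole produces an $i$ with both; rotating $P$ about this $i$ yields a cycle on $V(P)$, which connectivity then forces to be spanning (otherwise $P$ could be extended), so $G$ is Hamiltonian with $n=\ell+1\leq L+1$ and $e(G)\leq\binom{n}{2}=\frac12 n(n-1)\leq\frac12 Ln$. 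This closes the induction for this statement.

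For the first statement (circumference at most $L$) I would reduce further to $2$-connected graphs: for connected $G$ one has the block identity $\sum_{B}(|B|-1)=n-1$ over the blocks $B$ of $G$, together with $e(G)=\sum_{B}e(B)$, and each block has circumference at most $L$, so it suffices to bound $e(B)$ for each $2$-connected block (blocks of order at most $2$ being trivial once $L\geq 2$), which the induction hypothesis handles unless $B=G$. So assume $G$ itself is $2$-connected. If $n\leq L$ then $e(G)\leq\binom{n}{2}\leq\frac12 L(n-1)$. If $n\geq L+1$, then Dirac's theorem that a $2$-connected graph contains a cycle of length at least $\min(n,2\delta)$, combined with $\mathrm{circ}(G)\leq L<n$, forces $2\delta\leq L$; deleting a minimum-degree vertex $v$ and applying the induction hypothesis to $G-v$ (still of circumference at most $L$) gives $e(G)=e(G-v)+d(v)\leq\frac12 L(n-2)+\frac12 L=\frac12 L(n-1)$, completing the induction.

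I expect the only genuinely non-routine point to be the input that large minimum degree forces long paths and cycles: the longest-path rotation argument in the second statement, and the classical bound $\mathrm{circ}(G)\geq\min(n,2\delta(G))$ for $2$-connected $G$ (itself provable by essentially the same rotation argument) used in the first. Everything else consists of the two structural reductions — deleting a low-degree vertex, and splitting the graph at its components or at its cut vertices — together with the elementary comparisons of $\binom{n}{2}$ with $\frac12 Ln$ and $\frac12 L(n-1)$ in the range $n\leq L+1$.
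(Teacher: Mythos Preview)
The paper does not prove this theorem: it is quoted from Erd\H{o}s and Gallai \cite{Erdos1959} as a tool and used without proof, so there is no argument in the paper to compare against. Your proof is correct and follows one of the standard routes to the Erd\H{o}s--Gallai bounds: reduce to connected (respectively $2$-connected) graphs, peel off a vertex of minimum degree when $\delta\leq L/2$, and in the complementary range invoke the rotation argument on a longest path (respectively Dirac's bound $\mathrm{circ}(G)\geq\min(n,2\delta)$ for $2$-connected $G$) to force $n\leq L+1$ or a contradiction. The block identity $\sum_{B}(|B|-1)=n-1$ and the trivial estimate $\binom{n}{2}\leq\frac12 L(n-1)$ for $n\leq L$ close the induction cleanly. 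One small remark: your assumption $L\geq 2$ is genuinely needed for the circumference statement as written (a tree shows the bound fails for $L=1$), so it is worth saying explicitly that the theorem is vacuous or ill-posed for $L<2$ rather than leaving it as ``clearly''.
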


\begin{thm}[Gy\"{o}ri, Nikiforov and Schelp \cite{Gyori2003}]
\label{Thm: gyori et al}
Let $k,m$ be positive integers. There exist $n_{0}=n_{0}\left(k,m\right)$ and $c=c\left(k,m\right)>0$ such that for every nonbipartite $G$ on $n>n_{0}$ vertices with minimum degree
\begin{displaymath}
\delta > \frac{n}{2\left(2k+1\right)} +c,
\end{displaymath}
if $C_{2s+1} \subseteq G$, for some $k \leq s \leq 4k+1$, then $C_{2s+2j+1} \subseteq G$ for every $j \in [m]$.
\end{thm}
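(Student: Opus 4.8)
The plan is to derive the theorem from a single \emph{lengthening step} together with a trivial induction on $m$. Note first that every cycle length occurring in the statement lies between $2s+1$ and $2s+2m+1$, hence is at most $L:=8k+2m+3$, a constant depending only on $k$ and $m$. It therefore suffices to prove: for $n>n_{0}(k,m)$ and every graph $G$ on $n$ vertices with $\delta(G)>\frac{n}{2(2k+1)}+c$, if $C_{2r+1}\subseteq G$ with $2r+1\le L$, then $C_{2r+3}\subseteq G$. Applying this in turn to cycles of lengths $2s+1,2s+3,\dots,2s+2m-1$ — all still at most $L$, and with the hypothesis on $G$ unchanged — produces $C_{2s+3},C_{2s+5},\dots,C_{2s+2m+1}$, which is the assertion. (Non-bipartiteness is not needed separately, being implied by $C_{2r+1}\subseteq G$.)

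For the lengthening step, fix an odd cycle $C=v_{0}v_{1}\cdots v_{2r}v_{0}$ with $2r+1\le L$ and put $N_{i}:=N_{G}(v_{i})\setminus V(C)$, so each $|N_{i}|\ge \delta(G)-L$ is linear in $n$. I would hunt for a \emph{lengthening gadget}. The cleanest one is a pair of consecutive cycle vertices $v_{i},v_{i+1}$ and distinct vertices $x,y\notin V(C)$ with $v_{i}\sim x\sim y\sim v_{i+1}$: replacing the edge $v_{i}v_{i+1}$ by the path $v_{i}xyv_{i+1}$ turns $C$ into a $C_{2r+3}$. Slightly more general gadgets do the same job: an internally disjoint path of $G-(V(C)\setminus\{v_{i},v_{j}\})$ joining two cycle vertices $v_{i},v_{j}$ whose length exceeds by exactly two the even one of the two arcs of $C$ between them; or a copy of $C_{2r+3}$ lying entirely in $G-V(C)$. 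The step is proved by assuming that $G$ admits no gadget of any of these kinds and deriving a contradiction.

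The core of the argument is the structural analysis that follows this assumption. Absence of the basic gadget forces, for every $i$ (indices read modulo $2r+1$), that no edge of $G$ joins $N_{i}$ to $N_{i+1}$; equivalently, if each vertex $x\notin V(C)$ is encoded by its cycle-neighbourhood type $I(x):=\{\,i:v_{i}\in N_{G}(x)\,\}$, then for every edge $xy$ of $G-V(C)$ the sets $I(x)$ and $I(y)$ contain no cyclically consecutive pair of indices. One now pits the count $\sum_{x}|I(x)|=\sum_{i}|N_{i}|\ge (2r+1)\bigl(\delta(G)-L\bigr)$ against $|V(G)\setminus V(C)|\le n$, and uses the unavailability of the longer-path and disjoint-cycle gadgets to propagate the type constraint across the rest of $V(G)$; the outcome one wants is that these constraints cannot coexist with $\delta(G)>\frac{n}{2(2k+1)}+c$, which is the required contradiction (morally, the constraints would force $G$ to be essentially bipartite and to contain no short odd cycle, against $C_{2r+1}\subseteq G$). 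It is precisely here that the numerology is used: the bound $\delta>\frac{n}{2(2k+1)}+c$ together with the window $k\le s\le 4k+1$ is exactly what makes the comparison of $(2r+1)(\delta-L)$ with $n$ come out with the right sign across the relevant range of cycle lengths.

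The step I expect to be the real obstacle is this last, stability-flavoured piece of bookkeeping — especially in the short range $2k+1\le 2r+1\le 4k+1$, where $(2r+1)\delta$ need not exceed $n$ and the crude pigeonhole on the sets $N_{i}$ is no longer available. In that range one has to work harder: use that the linear minimum degree forces a reasonably long path in $G$ (via Theorem \ref{Thm: erdos-gallai}) and many almost-disjoint short odd subgraphs, and run a parity argument on how such a path can be spliced into $C$. Making every estimate consistent with one fixed choice of the functions $c(k,m)$ and $n_{0}(k,m)$ is the delicate part; the remaining surgery on cycles is routine.
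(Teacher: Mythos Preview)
This theorem is not proved in the paper. It is quoted in Section~\ref{Sec: tools} (``Results used in the proof'') as an external tool, with a citation to Gy\"{o}ri, Nikiforov and Schelp~\cite{Gyori2003}, alongside other imported results such as Dirac's Theorem, Bondy's Theorem, the Regularity Lemma, and the Erd\H{o}s--Gallai Theorem. The paper offers no proof or proof sketch of its own for Theorem~\ref{Thm: gyori et al}, so there is nothing here against which your proposal can be compared.

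As to the proposal itself: it is a plan, not a proof, and you say as much. The reduction to a single lengthening step $C_{2r+1}\Rightarrow C_{2r+3}$ is sound, and the basic gadget (replace a cycle edge $v_iv_{i+1}$ by a $P_3$ through $G\setminus V(C)$) together with the resulting constraint that $N_i$ and $N_{i+1}$ span no edge is the natural opening move. But the substance of the theorem is precisely the part you flag as ``the real obstacle'': in the short range $2k+1\le 2r+1\le 4k+1$ the count $(2r+1)(\delta-L)$ need not exceed $n$, so the crude pigeonhole on the $N_i$ fails, and one must actually carry out the structural/stability analysis that you only gesture at (``morally, the constraints would force $G$ to be essentially bipartite''). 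Until that step is written down with a concrete choice of $c(k,m)$ and $n_0(k,m)$, what you have is an outline of the standard attack rather than a proof. If you want to pursue it, the original paper~\cite{Gyori2003} is the place to look for how the delicate range is handled.
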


\section{Existence of short cycles}
\label{Sec: short odd cycles}
In this section we shall prove that unless we are in the extremal case, we have monochromatic cycles of all lengths $\ell \in [4,K]$ for a given integer $K$.

\begin{lemma}
\label{Lemma: odd cycles}
Let $K$ be an integer. Let $n$ be sufficiently large and let $G$ be a graph of order $n$ with $\delta\left(G\right) \geq \frac{3}{4}n$. If $E(G)=E(R_{G}) \cup E(B_{G})$ is a $2$-edge colouring, then either $C_{\ell} \subseteq R$ or $C_{\ell} \subseteq B$ for all $\ell \in \left[4,K\right]$, or $n=4p$, $G \cong K_{p,p,p,p}$ and the colouring is a $2$-bipartite $2$-edge colouring.
\end{lemma}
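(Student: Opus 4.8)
The plan is to prove the contrapositive-flavoured statement: assuming $\delta(G)\ge\frac34 n$ and that for some $\ell\in[4,K]$ neither $R$ nor $B$ contains $C_\ell$, deduce that $n=4p$, $G\cong K_{p,p,p,p}$ and the colouring is $2$-bipartite. First I would record the basic density consequence of the degree bound: $e(G)\ge\frac38 n^2$, so one colour, say $R$, has $e(R)\ge\frac{3}{16}n^2$. Since $\frac{3}{16}>\frac14\cdot\frac{n^2}{n^2}$ is \emph{not} immediately enough to quote Theorem~\ref{Thm: bollobas} directly on $R$ (which needs $e>\frac14 n^2$), the real work is a localisation argument: for each vertex $v$, its two colour-neighbourhoods $N_R(v)$ and $N_B(v)$ partition a set of size $\ge\frac34 n$, and I want to find a large vertex subset on which one colour is very dense. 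A cleaner route is to use Theorem~\ref{Thm: bondy/simonovits}: if $e(R)>100kn^{1+1/k}$ then $C_{2k}\subseteq R$, which for fixed $k$ (hence fixed even target length) is satisfied once $n$ is large, since $e(R)=\Theta(n^2)$. So even cycles up to length $K$ in one fixed colour come essentially for free, and likewise the other colour; the genuine obstruction is \textbf{short odd cycles}, which is exactly the phenomenon the $K_{p,p,p,p}$ example exploits.

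So the core of the argument is: if $n$ is large, $\delta(G)\ge\frac34 n$, and neither $R$ nor $B$ contains a triangle (or more generally some fixed short odd cycle), then $G$ is forced to be the extremal configuration. For triangles I would argue as follows. Fix any vertex $v$ and set $A=N_R(v)$, $B'=N_B(v)$, with $|A|+|B'|\ge\frac34 n$. Since $R$ is triangle-free, $A$ spans no red edge, so $G[A]$ is entirely blue; if $B$ is also triangle-free then $B[A]$ is triangle-free, and by Theorem~\ref{Thm: erdos-gallai} (or Turán / Bondy–Simonovits with $k=2$) $e(B[A])\le\frac14|A|^2+o(|A|^2)$, which combined with the degree condition inside $A$ forces $|A|$ to be small unless $A$ itself is close to bipartite. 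Symmetrically $G[B']$ is entirely red and near-bipartite. Pushing this: every colour-neighbourhood is (almost) a blue-bipartite or red-bipartite set, and these overlap in a controlled way as $v$ varies, so the whole vertex set decomposes into four parts $U_{1,1},U_{1,2},U_{2,1},U_{2,2}$ with the colour pattern of the Definition. The degree condition $\delta(G)\ge\frac34 n$ then forces each part to have size exactly $\frac n4$ and all cross-edges (except within the $R$-class or the $B$-class) to be present, i.e. $G\cong K_{p,p,p,p}$ with a $2$-bipartite colouring; any missing edge or unequal part size would drop some degree below $\frac34 n$ or create a monochromatic odd cycle.

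For odd cycles of length $\ell>3$ the same skeleton works but one replaces "triangle-free" with "$C_\ell$-free" and uses that a graph of order $N$ with more than $\frac14 N^2$ edges contains \emph{every} cycle length up to $\lceil N/2\rceil$ (Theorem~\ref{Thm: bollobas}), together with Theorem~\ref{Thm: gyori et al} to propagate a short odd cycle in a dense non-bipartite colour class to longer odd cycles and ultimately to the target length $\ell$. Concretely: if, say, $R$ is very dense on a large set $S$ (say $e(R[S])>\frac14|S|^2$), then either $R[S]$ is bipartite — pushing us toward the bipartite structure — or it is non-bipartite with large minimum degree after cleaning, hence by Theorem~\ref{Thm: gyori et al} contains odd cycles of all lengths in a long interval, in particular $C_\ell$, a contradiction. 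So the only way to avoid a monochromatic $C_\ell$ is for both $R$ and $B$ to be essentially bipartite on every large dense piece, which again pins down the four-part structure.

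The main obstacle I anticipate is the stability/exactness step: getting from "approximately $K_{p,p,p,p}$ with approximately bipartite colour classes" to the \emph{exact} statement. The approximate structure comes out of standard extremal/Turán-type estimates with $o(n^2)$ error terms, but the theorem asserts there are no exceptional edges, no exceptional vertices, and the parts have size \emph{exactly} $n/4$. Squeezing out the error requires a careful local argument: take a vertex $v$ that violates the clean structure (a "bad" vertex, or an edge of the wrong colour), and use $\delta(G)\ge\frac34 n$ together with the fact that every other colour-neighbourhood is near-bipartite to either recover a monochromatic $C_\ell$ through $v$ directly, or show $v$'s presence lowers some degree below $\frac34 n$. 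Handling $v$'s that lie "between" two of the $U_{i,j}$, and making sure the four sets are genuinely the \emph{unique} partition into independent sets (so that $G$ has no edges inside any $U_{i,j}$), is where most of the case analysis will live; I would isolate this as the technical heart of the proof and expect it to consume the bulk of Section~\ref{Sec: short odd cycles}.
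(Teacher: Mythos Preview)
Your handling of even cycles via Theorem~\ref{Thm: bondy/simonovits} matches the paper exactly, and your idea of using Theorem~\ref{Thm: gyori et al} to propagate a single short odd monochromatic cycle to all odd lengths up to $K$ is also the paper's reduction (their Claim~\ref{Claim: C3 or C5 is enough}): it suffices to produce a monochromatic $C_3$ or $C_5$, or else land in the extremal configuration.

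Where you diverge is in the structural step. You propose to analyse colour-neighbourhoods, deduce that each is near-bipartite in the opposite colour, assemble an approximate four-part decomposition, and then run a stability/cleaning argument to reach the exact $K_{p,p,p,p}$ structure --- and you correctly flag this exactness step as the main obstacle. The paper sidesteps it entirely with a much shorter observation: every $2$-colouring of $K_5$ contains a monochromatic $C_3$ or $C_5$, so if neither colour has one then $K_5\not\subseteq G$; Tur\'an's theorem then gives $G\cong T_4(n)$, and $\delta(G)\ge\frac34 n$ forces $n=4p$ and $G\cong K_{p,p,p,p}$ \emph{exactly}, with no error terms to clean up. From there, assuming $R$ is not bipartite, the paper takes a shortest red odd cycle $C$ (of length $\ge 7$), uses the $4$-partite structure to find a triangle or $5$-cycle in $G[V(C)]$ edge-disjoint from $C$, and notes that any edge of it which is red gives a shorter red odd cycle --- a brief case analysis, not a stability argument.

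Your route is plausible in spirit but would be substantially longer, and the step ``replace triangle-free by $C_\ell$-free and run the same skeleton'' is not sound as written: for $\ell>3$ the set $N_R(v)$ need not be red-edge-free, so the clean dichotomy you rely on collapses. The paper's $K_5$/Tur\'an shortcut is the idea you are missing; it removes precisely the difficulty you anticipated.
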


To prove this we shall use the following claim. The proof of Claim \ref{Claim: C3 or C5 is enough} follows exactly the method used in \cite{Li2010} to show the existence of short odd cycles. Note that we can not appeal directly to Theorem \ref{Thm: li's result}, as the assumption there is that $\delta(G) > \frac{3}{4}n$, whereas in Theorem \ref{Thm: main result} we assume only that $\delta(G) \geq \frac{3}{4}n$.

\begin{claim}
\label{Claim: C3 or C5 is enough}
Let $L$ be an integer. Let $n$ be sufficiently large and let $G$ be a graph of order $n$ with $\delta\left(G\right) \geq \frac{3}{4}n$. Suppose that $E(G)=E(R_{G}) \cup E(B_{G})$ is a $2$-edge colouring. If there is a monochromatic $C_{3}$ or $C_{5}$, then there is a monochromatic $C_{\ell}$ for all odd $\ell \in \left[5,2L+1\right]$.
\end{claim}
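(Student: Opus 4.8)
The plan is to first reduce to the case where we have a monochromatic triangle, and then to ``walk up'' the odd cycle lengths one at a time using the minimum degree condition. For the reduction, suppose we have a monochromatic $C_5$, say red, on vertices $v_1 v_2 v_3 v_4 v_5$. Each $v_i$ has red or blue degree at least $\frac{3}{8}n$ in $G$; if many vertices outside the $C_5$ have large red degree to it, we either find a red chord (giving a red $C_3$) or a red $C_3$ via two red neighbours of some $v_i$ among those vertices together with $v_i$ — and if instead the colouring is overwhelmingly blue in some large set, Theorem \ref{Thm: bollobas} applied to the blue graph on that set (whose edge count exceeds a quarter of the square of its order, since blue density there is close to $1 > \frac12$) produces a blue $C_3$ and in fact blue cycles of all the required lengths at once. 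So without loss of generality we may assume there is a monochromatic triangle, say a red $C_3$ on $\{x_1, x_2, x_3\}$.

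Now I would iterate. Suppose we have a red odd cycle $C = u_1 u_2 \cdots u_{2t+1} u_1$ with $2t+1 < 2L+1$, and we want a red $C_{2t+3}$. Consider the set $W = V(G) \setminus V(C)$, which has size $n - (2t+1) \geq n - 2L - 1$, so $|W| \geq (1 - o(1))n$. Each vertex $w \in W$ has $d_G(w) \geq \frac34 n$, hence at least $\frac34 n - (2t+1)$ neighbours in $W$ and, crucially, if we look at the bipartite graph between $W$ and $V(C)$, the average vertex of $C$ has many $G$-neighbours in $W$. If some vertex $w \in W$ has two red neighbours $u_i, u_{i+1}$ that are consecutive on $C$, then we can reroute $C$ through $w$ to get a red cycle of length $2t+2$; that is the wrong parity, so instead I look for a red \emph{path} of length $2$ between two vertices of $C$ at the right distance, or exploit pairs of vertices in $W$: if $w, w' \in W$ are red-adjacent and both red-adjacent to a common $u_i$, and $w$ is also red-adjacent to $u_{i-1}$ while $w'$ is red-adjacent to $u_{i+1}$, then $u_{i-1} w w' u_{i+1} \cdots$ replaces the single edge $u_{i-1} u_i u_{i+1}$ stretch by a path of length $3$, increasing the length by $2$. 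The counting needed is that, because $\delta(G) \geq \frac34 n$ and $|V(C)|$ is bounded, the number of such configurations is forced unless a large set is almost entirely blue — in which case we again finish via Theorem \ref{Thm: bollobas} in blue. An alternative and cleaner route for this step is to invoke Theorem \ref{Thm: gyori et al}: once we have a red (odd) cycle of some length in the window $[5, 2L+1]$, that theorem, applied with suitable $k$ and $m$ to either $R_G$ (if it is nonbipartite, which it is once it contains an odd cycle) restricted to a large high-red-degree subset, yields all longer odd cycles up to the required length in increments, provided the red minimum degree on that subset exceeds the stated threshold $\frac{n}{2(2k+1)} + c$; since any vertex failing to have red degree $\geq \frac{3}{8}n$ has blue degree $\geq \frac38 n$, a majority-colour / vertex-deletion argument isolates such a subset.

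The main obstacle is handling the vertices of low red degree: the argument must repeatedly dispose of the possibility that a linear-sized set of vertices is ``mostly blue'', and in each such case pivot to proving the conclusion (all required cycles) in blue instead. The bookkeeping is that ``mostly blue on a large set'' means that set, restricted to the blue graph, has more than a quarter of its order squared in edges, so Theorem \ref{Thm: bollobas} delivers blue cycles of every length in $[3, \lceil \cdot/2 \rceil]$, which for a set of size $(1-o(1))n$ covers $[3, 2L+1]$ and in particular all odd $\ell \in [5, 2L+1]$ — and this is stronger than what we need. So the real content is a dichotomy: either one colour class is locally dense enough on a huge set to be handled by a classical pancyclicity-type result, or the colour containing our initial short odd cycle has enough minimum degree on a huge set to apply Theorem \ref{Thm: gyori et al} and bootstrap up the odd lengths. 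Carrying out the degree accounting so that in every branch one of these two options is available, and checking the parameters of Theorem \ref{Thm: gyori et al} ($s$ in the range $[k, 4k+1]$, the additive constant $c$, $m$ large enough to reach $2L+1$ in one application or finitely many), is the technical heart; everything else is rerouting arguments that are standard once the dense/degree structure is pinned down.
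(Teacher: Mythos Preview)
Your overall strategy---set up a dichotomy between a ``one colour is dense on a large set'' case (handled by Theorem~\ref{Thm: bollobas}) and a ``the odd-cycle colour has linear minimum degree'' case (handled by Theorem~\ref{Thm: gyori et al})---is right and matches the paper's. But the execution has two genuine gaps.

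First, the reduction from a monochromatic $C_5$ to a monochromatic $C_3$ is unnecessary: Theorem~\ref{Thm: gyori et al} with $k=1$ accepts any $C_{2s+1}$ with $s\in\{1,2,3,4,5\}$ as its seed, so a $C_5$ works directly. The argument you sketch for the reduction is also flawed: ``two red neighbours of some $v_i$ together with $v_i$'' yields a red $C_3$ only if those two neighbours are themselves red-adjacent, which you have not arranged.

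Second, and more importantly, the ``majority-colour / vertex-deletion'' step that is supposed to isolate a large subset on which the odd-cycle colour has high minimum degree is not carried out, and it is not clear it works: passing to a subset can destroy the minimum-degree lower bound you need for Theorem~\ref{Thm: gyori et al}, and you must simultaneously keep the short odd cycle inside that subset. The paper sidesteps all of this by branching on the \emph{maximum} degree of a colour. If $\Delta(B)>\tfrac12 n+4L$, take $v$ with $d_B(v)=\Delta(B)$ and set $U=\Gamma_B(v)$; then either $B[U]$ contains a path of length $2L$ (which, routed through the hub $v$, gives blue $C_\ell$ for all $\ell\in[3,2L+1]$), or by Theorem~\ref{Thm: erdos-gallai} one has $e(B[U])\le L|U|$, whence $e(R[U])>\tfrac14|U|^2$ and Theorem~\ref{Thm: bollobas} applied to $R[U]$ finishes. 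Otherwise $\Delta(B)\le\tfrac12 n+4L$ and symmetrically $\Delta(R)\le\tfrac12 n+4L$, so $\delta(R),\delta(B)\ge\tfrac14 n-4L>\tfrac16 n+c(1,L)$ on the \emph{whole} graph, and Theorem~\ref{Thm: gyori et al} applies directly to whichever colour contains the given $C_3$ or $C_5$. This single threshold on $\Delta$ is the idea you are missing; without it, your rerouting and subset-restriction plans remain substantial unfinished work.
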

\begin{proof}
Suppose first that $\Delta\left(B\right) > \frac{1}{2}n + 4L$. Let $v$ be a vertex with $d_{B}\left(v\right)=\Delta\left(B\right)$, and $U = \Gamma_{B}\left(v\right)$. If $B[U]$ contains a path of length $2L$, then using the vertex $v$, there is a blue $C_{\ell}$ for all $\ell \in [3,2L+1]$. Hence $B[U]$ does not contain a path of length $2L$, and hence by Theorem \ref{Thm: erdos-gallai} we have $e\left(B[U]\right) \leq L |U|$. However, any vertex $v\in U$ has at most $\frac{1}{4}n$ non-neighbours in $U$ and so at least $|U|-\frac{1}{4}n$ neighbours. Hence
\begin{align*}
e\left(G[U]\right) &= \frac{1}{2}\sum_{u \in U} d_{G[U]}\left(u\right) \\
&\geq \frac{1}{2} |U| \left(|U|-\frac{1}{4}n\right) \\
&> \frac{1}{2}|U| \left(\frac{1}{2}|U| + 2L\right).
\end{align*}
Hence $e\left(R[U]\right)=e(G[U])-e(R[U]) >  \frac{1}{4}|U|^{2}$, and so by Theorem \ref{Thm: bollobas}, $R[U]$ has cycles of all lengths from $3$ to $\frac{1}{2}|U|$.

So we may assume that $\Delta\left(B\right) \leq \frac{1}{2}n + 4L$, and hence 
\begin{displaymath}
\delta\left(R\right) \geq \frac{1}{4}n - 4L > \frac{1}{6}n+c(1,L),
\end{displaymath}
where $c(1,L)$ is the constant from Theorem \ref{Thm: gyori et al}. Similarly we may assume that $\delta\left(B\right) > \frac{1}{6}n+c(1,L)$. Suppose that there is a monochromatic $C_{3}$ or $C_{5}$ and assume without loss of generality that it is red. Applying Theorem \ref{Thm: gyori et al} to $R$ with $L=1$ and $m=K$, there is a red $C_{\ell}$ for all odd $\ell \in [5,2L+1]$ as required.
\end{proof}

\begin{proof}[Proof of Lemma \ref{Lemma: odd cycles}]
Note that the existence of monochromatic $C_{\ell}$ for all even $\ell \in \left[4,K\right]$ is immediate from Theorem \ref{Thm: bondy/simonovits}. Hence, by Claim \ref{Claim: C3 or C5 is enough}, it is sufficient to prove that either there is a monochromatic $C_{3}$ or $C_{5}$, or $n=4p$, $G \cong K_{p,p,p,p}$ and the colouring is a $2$-bipartite $2$-edge colouring. Suppose that, in fact, none of these occur. Any $2$-edge colouring of $K_{5}$ contains a monochromatic $C_{3}$ or $C_{5}$. Hence we may assume that $K_{5} \nsubseteq G$. By Tur\'{a}n's Theorem, we must therefore have that $G \cong T_{4}(n)$. However, $\delta(G) \geq \frac{3}{4}n$ implies that in fact $n=4p$ and hence $G \cong K_{p,p,p,p}$. Let $U_{i}$ ($1\leq i \leq 4$) be the independent sets of $G$ of order $p$.

We may assume that $R$ is not bipartite. Let $C=v_{1}v_{2}\ldots v_{r}$ be a shortest odd cycle of $R$; we may assume that $r\geq 7$. We may properly $4$-vertex colour $C$ by setting $c(v_{i}) = j$ when $v_{i} \in U_{j}$. As $C$ is an odd cycle, there must be three consecutive vertices with different colours under $c$. Without loss of generality, assume that $c(v_{3})=1$, $c(v_{4})=2$ and $c(v_{5})=3$.

We will aim to show that $G[V(C)]$ contains a triangle or $5$-cycle which is edge-disjoint from $C$. Then we may assume that an edge of the triangle or $5$-cycle is red, else we have a monochromatic $C_{3}$ or $C_{5}$. But this red edge, together with $C$, will create a shorter red odd cycle than $C$, contradicting our assumption that $C$ was minimal. We shall find such a triangle or $5$-cycle by case analysis.

If $c(v_{1})$ is $2$ or $4$, then $G$ contains the triangle $v_{1}v_{3}v_{5}$, as these vertices lie in different $U_{j}$. Hence $c(v_{1}) \in \{1,3\}$, and similarly $c(v_{7}) \in \{1,3\}$.

If $c(v_{6})=4$, then $G$ contains the triangle $v_{1}v_{4}v_{6}$. So we may assume that $c(v_{6}) \neq 4$ and similarly $c(v_{2}) \neq 4$. Hence $c(v_{2}) \in \{2,3\}$ and $c(v_{6}) \in \{1,2\}$. If $c(v_{2})=3$ and $c(v_{6})=1$, then $G$ contains the triangle $v_{2}v_{4}v_{6}$. Hence, by symmetry, we may assume that $c(v_{2})=2$ and $c(v_{6}) \in \{1,2\}$.

If $c(v_{7})=1$, then $G$ contains the triangle $v_{2}v_{5}v_{7}$. Hence $c(v_{7})=3$.

If $|C|=7$, then as $c$ is a proper colouring, we have $c(v_{1})=1$. But then $v_{1}v_{5}v_{2}v_{7}v_{4}$ is a $5$-cycle in $G$, not containing any edges of $C$. So we may assume that $|C|>7$, and in particular $v_{1}v_{7} \notin E(C)$.

If $c(v_{1})=1$, then $v_{1}v_{4}v_{7}$ is a triangle in $G$. Hence $c(v_{1})=3$. But now, if $c(v_{6})=1$, then $G$ contains the triangle $v_{1}v_{4}v_{6}$, while if $c(v_{6})=2$, $G$ contains the triangle $v_{1}v_{3}v_{6}$, giving a contradiction.

Hence, in fact, our assumption was false, and one of the cases of the lemma holds.
\end{proof}

\section{Existence of long cycles}
\label{Sec: regularity section}
In order to find long monochromatic cycles, we will use the Regularity Lemma. Recall from Section \ref{Sec: tools} that having applied the Regularity Lemma to $G$, we define a reduced graph $H$. Note that the Regularity Lemma implies that the minimal degree of the reduced graph $H$ is not too much smaller than $\frac{k}{n}$ times the minimal degree of $G$. 

Suppose that the red edges of our reduced graph $H$ contain a large set of independent edges. Then we can use the Blow-up Lemma to create lots of long red paths in $G$, which we can hope to join together into long red cycles. One situation in which we could join together the paths in $G$ obtained from a monochromatic matching in $H$ is when the matching is contained in a component of the relevant colour in $H$. Then we can use the properties of regular pairs, and in particular the Embedding Lemma, to join the paths. The following lemma, proved in Section \ref{Sec: proof of extremal lemma} using extremal arguments, shows that if there is no monochromatic component of $H$ containing a large matching, then the reduced graph has one of two particular forms.

\begin{lemma}
\label{Lemma: reduced graph}
Let $0<\delta < \frac{1}{36} $ and let $G$ be a graph of sufficiently large order $n$ with $\delta(G) \geq \left( \frac{3}{4} -\delta \right)n$. Suppose that we are given a $2$-edge colouring $E(G)=E(R) \cup E(B)$. Then one of the following holds.
\begin{enumerate}[(i)]
\item There is a component of $R$ or $B$ which contains a matching on at least $\left( \frac{2}{3} +\delta \right)n$ vertices.
\item There is a set $S$ of order at least $\left(\frac{2}{3}-\frac{\delta}{2} \right)n$ such that either $\Delta(R[S]) \leq 10\delta n$ or $\Delta(B[S]) \leq 10\delta n$.
\item There is a partition $V(G)=U_{1} \cup \cdots \cup U_{4}$ with $\underset{i}{\min} |U_{i}| \geq \left(\frac{1}{4}-3\delta\right)n$ such that there are no red edges from $U_{1} \cup U_{2}$ to $U_{3} \cup U_{4}$ and no blue edges from $U_{1} \cup U_{3}$ to $U_{2} \cup U_{4}$.
\end{enumerate} 
\end{lemma}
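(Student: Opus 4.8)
The plan is to assume (i) fails and deduce (ii) or (iii). Two elementary observations do most of the work. First, $\delta(G)\ge(\tfrac34-\delta)n$ means every vertex has at most $(\tfrac14+\delta)n$ non-neighbours, so any two disjoint vertex sets of total size more than $(\tfrac14+\delta)n$ are joined by an edge of $G$, and that edge is blue if the two sets lie in different components of $R$ and red if they lie in different components of $B$. Consequently a vertex lying in a monochromatic component of order at most $\tfrac13 n$ has more than $(\tfrac5{12}-\delta)n$ edges of $G$ leaving that component, all of the other colour, so it lies in a component of the other colour of order more than $\tfrac13 n$; in particular each colour has at most two components of order exceeding $\tfrac13 n$ (three would total more than $n$), and the two colours cannot both be ``small'' in this sense. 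Secondly, the vertices missed by a maximum matching of any graph form an independent set, so the failure of (i) says that inside every component $C$ of $R$ (resp.\ of $B$) a maximum monochromatic matching covers fewer than $(\tfrac23+\delta)n$ vertices; applying the defect form of Tutte's theorem (Theorem~\ref{Thm: Tutte}) inside $C$ yields a set $S_C\subseteq V(C)$, of order less than that matching number, with $q(C-S_C)-|S_C|>|C|-(\tfrac23+\delta)n$ and with every component of $C-S_C$ of order less than $(\tfrac23+\delta)n$ (a larger one would be monochromatically connected with a near-perfect matching, reinstating (i)).

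The first case is that both $R$ and $B$ have two components of order exceeding $\tfrac13 n$, say $C_1,C_2$ and $D_1,D_2$. Set $U_{ij}=C_i\cap D_j$ and assign the remaining vertices as follows: a vertex in a small red component but a large blue component $D_j$ goes into $U_{1j}$ (so each small red component is assigned \emph{entirely} to row $1$), and a vertex in a large red component $C_i$ but a small blue component goes into $U_{i1}$ (each small blue component assigned entirely to column $1$); since no vertex can lie in a small component of both colours, this is a well-defined partition $V=U_{11}\cup U_{12}\cup U_{21}\cup U_{22}$. Because vertices in $U_{11}$ and $U_{22}$ lie in different components of both colours there is no $G$-edge between them, and likewise for $U_{12}$ and $U_{21}$; hence each cell has order at most $(\tfrac14+\delta)n$ and therefore at least $n-3(\tfrac14+\delta)n\ge(\tfrac14-3\delta)n$. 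By construction every red edge lies inside $U_{11}\cup U_{12}$ or inside $U_{21}\cup U_{22}$, and every blue edge lies inside $U_{11}\cup U_{21}$ or inside $U_{12}\cup U_{22}$, so relabelling $U_1=U_{11}$, $U_2=U_{12}$, $U_3=U_{21}$, $U_4=U_{22}$ gives exactly case (iii).

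In the remaining case one colour, say $R$, has at most one component of order exceeding $\tfrac13 n$. Using the first observation one shows that this forces $B$ to be connected and spanning (or else we fall back into the previous case), and that $R$ is highly fragmented. Apply the defect form of Tutte's theorem to $B$: there is a set $S_B$ of order less than $(\tfrac13+\tfrac\delta2)n$ such that each component of $B-S_B$ has order less than $(\tfrac23+\delta)n$; call those of order at least $10\delta n$ the \emph{medium} blue pieces, and let $T_B$ be the union of $S_B$-complement minus the medium pieces. Then $\Delta(B[T_B])<10\delta n$, so if $|T_B|\ge(\tfrac23-\tfrac\delta2)n$ we are in case (ii). Otherwise $S_B$ together with the medium blue pieces covers more than $(\tfrac13+\tfrac\delta2)n$ vertices, so there is at least one medium blue piece; carrying out the symmetric analysis for $R$ inside its (at most one) large component produces medium red pieces, and one then either reads off (ii) from the red decomposition when all its pieces outside a small cut have order below $10\delta n$, or intersects the medium red pieces with the medium blue pieces. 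Since opposite intersection cells carry no $G$-edge, since a medium red piece cannot meet two distinct medium blue pieces in large sets (and vice versa), and since every vertex outside the cut sets and the genuinely small pieces lies in such an intersection, the medium pieces of the two colours are pinned into a crossing $2\times2$ array of cells each of order about $\tfrac14 n$; absorbing $S_R$, $S_B$, the small pieces and $V\setminus C_1$ as in the first case, and checking the edge constraints, gives the partition of (iii).

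The main obstacle is this last step. Once one colour is essentially connected one must combine the Tutte decompositions of the red and the blue graphs and argue that the ``no $G$-edge between opposite cells'' constraint rigidly forces the medium pieces of the two colours to interlock into four balanced parts, ruling out empty or lopsided cells; moreover for small $\delta$ there can be $\Theta(1/\delta)$ medium pieces, which must be shown to coalesce. This is a genuine extremal-stability argument, and propagating the error terms through it so that the thresholds emerge as exactly $(\tfrac23+\delta)n$, $(\tfrac23-\tfrac\delta2)n$, $(\tfrac14-3\delta)n$ and $10\delta n$ for every $\delta<\tfrac1{36}$ is the bulk of the work; this is why the proof is deferred to its own section.
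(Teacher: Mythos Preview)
Your first case (both colours have two components of order exceeding $n/3$) is essentially right and matches the spirit of the paper's structural claim leading to case (iii); the only thing you skip is verifying that all four cells $U_{ij}$ are nonempty before invoking the ``opposite cells have no $G$-edge, hence each cell has order at most $(\tfrac14+\delta)n$'' argument. The paper handles this separately in its preliminary component claim.

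The second case is where you diverge from the paper and where the real gaps are. First, the assertion that ``$R$ has at most one large component forces $B$ to be connected (or else we fall back into the previous case)'' is false: $B$ may well have two components of order exceeding $n/3$ while $R$ has only one, and that configuration is \emph{not} your first case. Second, and more fundamentally, your plan to intersect ``medium'' red and blue Tutte pieces and argue they must interlock into a $2\times2$ array, thereby returning to case (iii), is not what the paper does and is not obviously workable; once case (iii) has been set aside, the paper never revisits it.

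The paper's mechanism is different and cleaner. It applies Tutte inside the largest red component $R'$ to get $S_R$ with $|S_R|<(\tfrac13+\tfrac\delta2)n$, and lets $T_R$ be the vertices lying in bounded-size components of $R'-S_R$, so that $|X_R\cup T_R|>|S_R|+(\tfrac13-2\delta)n$. The key step, which has no analogue in your outline, is that \emph{every blue component of order at most $(\tfrac5{12}-2\delta)n$ must lie entirely inside $S_R$}: a vertex in such a component has red degree exceeding $(\tfrac13+\delta)n$, too large for $X_R\cup T_R$, and if it sat outside $S_R$ it would have no neighbours whatsoever in $X_R\cup T_R$, violating $\delta(G)$. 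Combined with a short case analysis on $|B'|$ (and the observation, via defect Hall, that $|B'|$ in the middle range $[(\tfrac13+\tfrac\delta2)n,(\tfrac23-\tfrac\delta2)n]$ already reinstates (i)), this forces $|S_R|\ge(\tfrac13-2\delta)n$ and $|R'|\ge(1-\tfrac{5\delta}{2})n$; a counting argument then shows that almost all of $R'-S_R$ consists of singleton components, so $\Delta(R[V\setminus S_R])\le 10\delta n$ and we are in case (ii). There is no need to coalesce $\Theta(1/\delta)$ medium pieces.
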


In the first case, we will need the following lemma, which is also proved in Section \ref{Sec: proof of extremal lemma}.

\begin{lemma}
\label{Lemma: large matching in large component or with odd cycle}
Let $0<\delta < \frac{1}{6}$ and let $G$ be a graph of sufficiently large order $n$ with $\delta(G) \geq \left( \frac{3}{4} -\delta \right)n$. Suppose that we are given a $2$-edge colouring $E(G)=E(R) \cup E(B)$. Suppose that there is a monochromatic component containing a matching on at least $\left(\frac{2}{3}+\delta\right)n$ vertices. Then there is a monochromatic component $C$ containing a matching on at least $\left(\frac{1}{2}+\delta \right)n$ vertices such that either $C$ contains an odd cycle, or $|C|\geq \left(1-5\delta\right)n$.
\end{lemma}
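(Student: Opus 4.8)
The plan is to assume we have a monochromatic component, say a red component $C_1$, containing a matching $M_1$ on at least $\left(\frac{2}{3}+\delta\right)n$ vertices, and to show that either $C_1$ itself already satisfies the conclusion (it has an odd cycle, or it is huge), or else we can locate a \emph{blue} component that does. The first observation is that if $R$ has any odd cycle inside $C_1$ we are immediately done, since a matching on $\left(\frac{2}{3}+\delta\right)n \geq \left(\frac12+\delta\right)n$ vertices is in particular a matching of the required size. So assume $R[C_1]$ is bipartite, with bipartition $C_1 = X \cup Y$; without loss of generality $|X| \le |Y|$. If $|C_1| \ge (1-5\delta)n$ we are also done, so assume $|C_1| < (1-5\delta)n$, which means there is a set $W = V(G)\setminus C_1$ with $|W| > 5\delta n$, and there are no red edges between $W$ and $C_1$.

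The heart of the argument is then a counting/degree argument to force a large blue structure. Since there are no red edges from $W$ to $C_1$, every vertex of $W$ sees all of $C_1$ (modulo non-adjacency in $G$) in blue: each $w \in W$ has $d_{B,C_1}(w) \ge |C_1| - \tfrac14 n$ (using $\delta(G)\ge (\tfrac34-\delta)n$, so at most $(\tfrac14+\delta)n$ non-neighbours, and discarding the $\delta n$ slack into the error terms). Similarly, inside $C_1$, since $R[C_1]$ is bipartite, every vertex of $X$ sees almost all of $X$ in blue and every vertex of $Y$ sees almost all of $Y$ in blue. The goal is to show that $W$, together with a suitable large piece of $X$ or of $Y$, all lies in a single blue component, and that this blue component carries a matching on at least $\left(\frac12+\delta\right)n$ vertices. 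One clean way: note $W \cup X$ all lies in one blue component $C_2$ (every vertex of $W$ is blue-joined to almost all of $X$, and $X$ is internally blue-dense, so $W\cup X$ is blue-connected provided $|X|$ is not tiny; if $|X|$ is tiny, i.e.\ $|X| = o(n)$, then $|Y| \approx |C_1|$ and $Y$ is internally blue-dense of size close to $\left(\frac23+\delta\right)n$, which by Dirac-type reasoning has a near-perfect blue matching, and $Y$ with $W$ is blue-connected, giving the conclusion with room to spare). In the main case, $C_2 \supseteq W \cup X$ has order at least $|W| + |X| > 5\delta n + |X|$; combined with the blue matching available inside $X$ (again $R[C_1]$ bipartite forces $B[X]$ to be complete-ish, so it has a matching on $2\lfloor |X|/2\rfloor$ vertices) and between $W$ and $X$, one shows $C_2$ has a matching on at least $\min\{2|X|, |X|+|W|\} \ge \left(\frac12+\delta\right)n$ vertices. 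Here one uses $|X| + |Y| \ge \left(\frac23+\delta\right)n$ and $|X|\le |Y|$: either $|X| \ge \left(\frac14+\frac{\delta}{2}\right)n$, in which case $2|X|$ comfortably exceeds $\left(\frac12+\delta\right)n$, or $|X|$ is smaller, in which case $|Y|$ is correspondingly larger and we run the $Y$-argument of the previous parenthetical instead.

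The main obstacle I expect is \textbf{making the matching bookkeeping precise in the borderline regime}, where $|X|$ is close to but below $\left(\frac14+\frac{\delta}{2}\right)n$ and $|Y|$ is close to but below $\left(\frac{5}{12}\right)n$-ish: one must check that in every split of $\left(\frac23+\delta\right)n$ into $|X|\le|Y|$, at least one of the two blue components (the one containing $W\cup X$, or the one containing $W \cup Y$, or $Y$ alone) provably carries a matching on $\left(\frac12+\delta\right)n$ vertices, and that the blue connectivity claims survive the $\delta(G)\ge(\tfrac34-\delta)n$ slack. Making sure the constants line up — in particular that $5\delta n$ of ``outside'' vertices plus the larger half of $C_1$ always beats $\left(\frac12+\delta\right)n$ — is a short but fiddly case check, and the constraint $\delta < \frac16$ is presumably exactly what is needed to close it. A secondary technical point is handling $G$-non-adjacencies consistently: every ``$w$ sees $X$ in blue'' statement is really ``$w$ sees all but $\le (\tfrac14+\delta)n$ of $V(G)$'', so blue-connectivity arguments should be phrased via ``two vertices with enough common blue neighbours are blue-connected'' rather than via literal completeness, but the minimum degree makes this routine.
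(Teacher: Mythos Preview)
Your plan addresses only half of what the lemma demands. You correctly reduce to the case where $R[C_1]$ is bipartite with parts $X\le Y$ and $|C_1|<(1-5\delta)n$, and you then look for a blue component with a matching on $(\tfrac12+\delta)n$ vertices. But the conclusion of the lemma requires \emph{two} things of the new component $C$: a large matching \emph{and} that $C$ either contains an odd cycle or has $|C|\ge(1-5\delta)n$. Your proposal never mentions the second condition. Finding some blue component $C_2\supseteq W\cup X$ with a matching on $(\tfrac12+\delta)n$ vertices is not enough: nothing in your outline rules out $C_2$ being bipartite with $|C_2|$ well below $(1-5\delta)n$. The paper spends a substantial second phase on precisely this, assuming $B'$ is bipartite with classes $Z_B,Y_B$ and $|B'|<(1-5\delta)n$, intersecting this bipartition with $Z_R$ to get two independent sets of $G$ each of size at most $(\tfrac14+\delta)n$, and then using a vertex $v\in X_R$ to force almost all of $Y_R$ into $B'$, contradicting $|B'|<(1-5\delta)n$.

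There is also a looseness in your connectivity step that the paper avoids. You propose to show $W\cup X$ is blue-connected via ``$X$ is internally blue-dense''. But $|X|\le\tfrac12|C_1|<(\tfrac12-\tfrac{5\delta}{2})n$, so $\delta(B[X])\ge |X|-(\tfrac14+\delta)n$ can be well below $\tfrac12|X|$, and two vertices of $X$ need not share a common blue neighbour in $X\cup W$ when $|Y|$ is large. The paper works with the \emph{larger} side $Z_R$ (your $Y$): since $|Y_R|<(\tfrac12-\tfrac{5\delta}{2})n$, any two vertices of $Z_R$ share a common neighbour outside $Y_R$, which is necessarily blue, so $Z_R$ is blue-connected; then $X_R$ attaches in blue to both $Y_R$ and $Z_R$. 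For the matching in $B'$ the paper does not build it directly at all, but rather argues by contradiction via Tutte--Berge: a putative separating set $S$ and the union $T$ of small components of $B[V(B')\setminus S]$ are forced (by degree counts) to lie entirely inside $Y_R$, and then $|Y_R|$ is too small to hold them. Your direct-construction route may be salvageable, but the ``main obstacle'' is not the matching arithmetic you flag; it is the missing odd-cycle-or-large-size verification for the blue component.
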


By analysing the original graph, we will use the following two lemmas to show that, in the second and third cases of Lemma \ref{Lemma: reduced graph}, we will have the desired monochromatic cycles. In both of the following two lemmas, we assume the following setup. We have constants $0<\epsilon \ll d \ll \delta <\frac{1}{144}$. Let $n$ be sufficiently large and $G$ a graph of order $n$ with $\delta\left(G\right) \geq \frac{3}{4}n$. Suppose that $E(G)=E(R_{G}) \cup E(B_{G})$ is a $2$-edge colouring. We find a regular partition of $G$ using Theorem \ref{Thm: regularity}, and as defined in Section \ref{Sec: tools}, let $H$ be the $(\epsilon,d)$-reduced $2$-edge coloured graph obtained from this partition.

The following results will be useful, and will be proved in Section \ref{Sec: proof of extremal lemma}
\begin{lemma}
\label{Lemma: no large independent sets}
If $B_{G}$ has an independent set $S$ with $|S|> \frac{1}{2}n$, then $C_{\ell} \subseteq R_{G}$ for all $\ell \in [3, |S|]$. Further, if $B_{G}$ is bipartite, then either $C_{\ell} \subseteq R_{G}$ for all $\ell \in [4, \lceil\frac{n}{2}\rceil]$, or $n$ is divisible by four, $G\cong K_{n/4,n/4,n/4,n/4}$ and the colouring is a $2$-bipartite $2$-edge colouring. 
\end{lemma}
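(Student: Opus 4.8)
The plan is to reduce everything to Dirac's Theorem (Theorem~\ref{Thm: Dirac}) and Bondy's Theorem (Theorem~\ref{Thm: bondy}), applied to the red graph on a large blue-independent set. For the first assertion: if $S$ is independent in $B_{G}$ then $R_{G}[S]=G[S]$, and since $\delta(G)\geq\frac{3}{4}n$ every $v\in S$ has at most $\frac{1}{4}n-1$ non-neighbours in $G$, hence at least $|S|-\frac{1}{4}n$ neighbours inside $S$. Thus $\delta(G[S])\geq|S|-\frac{1}{4}n>\frac{1}{2}|S|$ and $e(G[S])\geq\frac{1}{2}|S|\bigl(|S|-\frac{1}{4}n\bigr)>\frac{1}{4}|S|^{2}$, both inequalities strict since $|S|>\frac{1}{2}n$. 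By Theorem~\ref{Thm: Dirac}, $G[S]$ is hamiltonian, so by Theorem~\ref{Thm: bondy} it is pancyclic --- the exceptional graph $K_{|S|/2,|S|/2}$ has only $\frac{1}{4}|S|^{2}$ edges and is ruled out by strictness --- and hence $C_{\ell}\subseteq G[S]=R_{G}[S]\subseteq R_{G}$ for all $\ell\in[3,|S|]$.

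For the ``further'' statement, suppose $B_{G}$ is bipartite with bipartition $X\cup Y$, $|X|\leq|Y|$, so $|Y|\geq\frac{1}{2}n$. If $|Y|>\frac{1}{2}n$, the first part with $S=Y$ gives $C_{\ell}\subseteq R_{G}$ for all $\ell\in[3,|Y|]\supseteq[4,\lceil\frac{1}{2}n\rceil]$, and we are done. Otherwise $|X|=|Y|=\frac{1}{2}n$ and $n$ is even; then $G[X]$ and $G[Y]$ are entirely red, and the same degree count gives $\delta(G[X]),\delta(G[Y])\geq\frac{1}{4}n$ and $e(G[X]),e(G[Y])\geq\frac{1}{16}n^{2}=\frac{1}{4}\bigl(\tfrac{1}{2}n\bigr)^{2}$, so by Theorems~\ref{Thm: Dirac} and~\ref{Thm: bondy} each of $G[X],G[Y]$ is pancyclic or isomorphic to $K_{n/4,n/4}$. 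If either is pancyclic then $C_{\ell}\subseteq R_{G}$ for all $\ell\in[3,\frac{1}{2}n]\supseteq[4,\lceil\frac{1}{2}n\rceil]$; note this necessarily happens unless $4\mid n$, since Theorem~\ref{Thm: bondy} applied in order $\frac{1}{2}n$ gives its exceptional graph only when $\frac{1}{2}n$ is even.

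So it remains to treat $G[X]\cong K_{n/4,n/4}\cong G[Y]$ with $4\mid n$. Write $X=X_{1}\cup X_{2}$ and $Y=Y_{1}\cup Y_{2}$ for the two bipartitions, four independent sets of $G$ of size $\frac{1}{4}n$. Each $v\in X_{1}$ is non-adjacent in $G$ precisely to $X_{1}\setminus\{v\}$, hence joined to everything outside $X_{1}$; arguing the same way for $X_{2},Y_{1},Y_{2}$ shows $G\cong K_{n/4,n/4,n/4,n/4}$ with parts $X_{1},X_{2},Y_{1},Y_{2}$, and since $X,Y$ are blue-independent all edges between $X_{1}$ and $X_{2}$ and between $Y_{1}$ and $Y_{2}$ are red. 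If $R_{G}$ is bipartite, then both colours are bipartite and $G\cong K_{p,p,p,p}$ with $n=4p$, so the colouring is a $2$-bipartite $2$-edge colouring and we are in the extremal case. If $R_{G}$ is not bipartite, the $4$-vertex auxiliary graph recording which pairs of parts carry a red edge (it always contains the pairs $X_{1}X_{2}$ and $Y_{1}Y_{2}$) has a triangle, so after relabelling some part, say $Y_{1}$, sends a red edge to $X_{1}$ and a red edge to $X_{2}$. If some $z\in Y_{1}$ has red neighbours $a_{1}\in X_{1}$ and $a_{2}\in X_{2}$, then concatenating the edges $za_{1}$ and $a_{2}z$ with a red path from $a_{1}$ to $a_{2}$ of any odd length in $K_{X_{1},X_{2}}$ gives a red cycle of any odd length in $[3,\tfrac{1}{2}n+1]$; otherwise, picking red edges $a_{1}u_{1}$ and $a_{2}u_{2}$ with $a_{i}\in X_{i}$ and $u_{1},u_{2}\in Y_{1}$ necessarily distinct, and joining the red edge $a_{1}a_{2}$ with $a_{2}u_{2}$, $u_{1}a_{1}$ and a red path from $u_{1}$ to $u_{2}$ of any even length in $K_{Y_{1},Y_{2}}$ gives a red cycle of any odd length in $[5,\tfrac{1}{2}n+1]$. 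In either case, together with the even cycles $C_{4},C_{6},\dots,C_{n/2}$ inside $K_{X_{1},X_{2}}$, we obtain $C_{\ell}\subseteq R_{G}$ for all $\ell\in[4,\lceil\frac{1}{2}n\rceil]$.

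The only genuinely delicate point is this equality case $|X|=|Y|=\frac{1}{2}n$, where the extremal colourings live: one must verify carefully that the padded odd red cycles really do cover every required length for all large $n$ (in particular the degenerate configurations with only a few red edges between $X$ and $Y$), and that the bipartite alternative is a $2$-bipartite $2$-edge colouring in the sense defined. The rest reduces cleanly to Theorems~\ref{Thm: Dirac} and~\ref{Thm: bondy}.
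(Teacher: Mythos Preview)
Your proof is correct and follows essentially the same approach as the paper's: apply Dirac and Bondy to $R_{G}[S]$ for the first assertion, reduce the bipartite case to $G\cong K_{n/4,n/4,n/4,n/4}$ via the equality case of Bondy's theorem, and then build the missing odd red cycles by routing a short red detour across the $X$--$Y$ cut and padding with a long path inside one of the complete bipartite halves. The only cosmetic difference is in that last construction: the paper fixes a part $S_{1,1}$ with red edges to both $S_{2,1}$ and $S_{2,2}$, produces a red $S_{2,1}$--$S_{2,2}$ path of length $2$ or $4$ through $S_{1}$, and closes it with an odd path in $R_{G}[S_{2}]$; your Case~2 instead uses the single edge $a_{1}a_{2}$ on the $X$ side and does the padding in $K_{Y_{1},Y_{2}}$. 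Both versions cover all odd lengths in $[5,\tfrac{n}{2}+1]$, which together with the even cycles in $K_{X_{1},X_{2}}$ is exactly what is needed.
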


\begin{lemma}
\label{Lemma: large sparse set}
If there is a set $S\subseteq V(H)$ of order at least $\left(\frac{2}{3}-\frac{\delta}{2}\right)k$ such that $\Delta(R_{H}[S]) \leq 10\delta k$, then $G$ contains a blue cycle of length $\ell$ for all $\ell \in [3, \left(\frac{2}{3}-\delta\right)n]$.
\end{lemma}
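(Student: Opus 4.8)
The plan is to use the set $S \subseteq V(H)$, on which the red reduced graph has very small maximum degree, to exhibit a large blue subgraph of $H$ that is (nearly) balanced bipartite, and then to promote this to long blue cycles in $G$ via the Blow-up Lemma and the Embedding Lemma. First I would observe that the minimum degree condition $\delta(G) \ge \frac34 n$ transfers to the reduced graph: by the degree form of the Regularity Lemma (Theorem \ref{Thm: regularity}), with $\epsilon, d$ small we have $\delta(H) \ge (\frac34 - 2d - \epsilon)k > (\frac34 - \delta/10)k$, say. Since every vertex of $H$ sends at most one colour's worth of edges, and inside $S$ the red degree of each vertex is at most $10\delta k$, every vertex of $S$ has blue degree at least $(\frac34 - \delta/10 - 10\delta)k$ restricted to $V(H)$, and in particular blue degree at least $|S| - 10\delta k - (k - |S|) \ge |S| - 10\delta k - (\frac13 + \frac{\delta}{2})k$ into $S$ itself. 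Thus $B_H[S]$ has minimum degree at least roughly $(\frac13 - 11\delta)|S| \cdot \frac{|S|}{|S|}$; more carefully, since $|S| \ge (\frac23 - \delta/2)k$, one gets $\delta(B_H[S]) \ge \frac{|S|}{2} + c k$ for a small positive $c$ once $\delta$ is small enough, because $(\frac13 - 11\delta)k$ comfortably exceeds $\frac12 (\frac23 + \frac{\delta}{2})k \ge \frac12 |S|$ for small $\delta$.

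Next I would extract the bipartite structure. The graph $B_H[S]$ is dense with minimum degree above $\tfrac12|S|$, so by Dirac's Theorem (Theorem \ref{Thm: Dirac}) it has a Hamilton cycle, hence a perfect (or near-perfect) matching $M$ of size $\lfloor |S|/2 \rfloor$. I would then pass to a subgraph on a balanced bipartition: take the two colour classes obtained by alternating along the Hamilton cycle, of sizes $\lceil |S|/2 \rceil$ and $\lfloor |S|/2 \rfloor$, and note that the blue bipartite graph between them still has large minimum degree (at least $\delta(B_H[S]) - |S|/2 \ge ck$). Equivalently, one keeps a balanced blue bipartite subgraph $H' = (X, Y)$ with $|X| = |Y| = m' \ge (\frac13 - \delta)k$ and $\delta(H') \ge \gamma m'$ for some fixed $\gamma > 0$. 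Inside such a bipartite reduced graph, I would select, using Dirac's theorem once more, a Hamilton cycle of $H'$; the corresponding cylinder of clusters, after the standard cleaning step (removing a small proportion of vertices from each cluster and from $V_0$ to make consecutive pairs super-regular — using that regular pairs of density $\ge d$ contain super-regular subpairs), supports a blue cycle through almost all the corresponding vertices of $G$ of every length up to essentially $m' \cdot m \ge (\frac13 - \delta)k \cdot m \approx (\frac13 - \delta)n$ on each side, i.e.\ a blue cycle of every length in $[3, 2(\frac13 - \delta)n] \supseteq [3, (\frac23 - \delta)n]$.

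The precise mechanism for hitting every length is the usual one: along a single super-regular pair of the cylinder one can, by the Embedding Lemma (Theorem \ref{Thm: embedding lemma}) and elementary path-manipulation inside regular pairs, build paths of every length between a linear-sized reservoir of endpoints, and then by the Blow-up Lemma (Theorem \ref{Thm: blow-up lemma}) one embeds a near-spanning cycle of the blown-up $H'(t)$ into $G$ with prescribed restrictions on a bounded number of vertices per cluster, so that combining a near-spanning blue cycle with short detours yields all lengths in the stated interval. The main obstacle is the bookkeeping at the bottom of the range and making the two degree reductions (first in $H$, then in passing to a balanced bipartite subgraph) leave something with a genuinely positive density bound $\gamma$ uniform in $n$; this is exactly where the hypothesis $\delta < \frac{1}{144}$ and the ordering $\epsilon \ll d \ll \delta$ are used, to guarantee $(\frac13 - 11\delta) > \frac12(\frac23 + \frac{\delta}{2})$ with room to spare. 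The short even cycles $C_3$ in the statement should be read as the even part being handled as above and $C_3$ coming for free since a blue triangle is trivially present in any dense reduced cylinder unless $B$ is bipartite — but in our situation $B_H[S]$ has odd cycles is not needed; a blue $C_3$ in $G$ follows directly from $B_G$ having a triangle, which it must since otherwise $B_G$ is triangle-free with $e(B_G)$ large, forcing (by Turán-type bounds) $B_G$ nearly bipartite, contradicting the large blue minimum degree on $S$. I would verify that last point carefully as it is the only place $\ell = 3$ (as opposed to $\ell = 4$) enters.
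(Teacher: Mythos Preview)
Your approach is far more elaborate than necessary, and it has a genuine gap in the odd-cycle case. You work entirely inside the reduced graph, extract a Hamilton cycle of $B_H[S]$, pass to an alternating bipartition, and then invoke the Blow-up Lemma. But blowing up a bipartite cylinder naturally yields \emph{even} blue cycles in $G$; to get every odd length (and in particular $C_3$) you would need an odd blue circuit somewhere in the construction, and your treatment of this is hand-waved. The final paragraph's argument for a blue $C_3$ is essentially circular (``a blue $C_3$ follows from $B_G$ having a triangle''), and the fallback Tur\'an-type sketch does not actually produce a triangle from the hypotheses at hand.

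The paper's proof bypasses all of this by going straight back to the original graph rather than working through the regularity machinery. Let $S' \subseteq V(G)$ be the union of the clusters indexed by $S$, so $|S'| \ge (\tfrac{2}{3}-\delta)n$. In $G'$, a vertex $v \in S'$ has red neighbours in $S'$ only inside clusters that are $R_H$-adjacent to its own cluster; since $\Delta(R_H[S]) \le 10\delta k$, this gives at most $10\delta km \le 10\delta n$ such neighbours, and adding back the at most $(2d+\epsilon)n$ edges lost in passing from $G$ to $G'$ yields $\Delta(R_G[S']) \le 11\delta n$. Since $\delta(G) \ge \tfrac{3}{4}n$, every vertex of $S'$ has at least $|S'| - \tfrac{1}{4}n$ neighbours in $S'$, whence
\[
\delta\bigl(B_G[S']\bigr) \;\ge\; |S'| - \tfrac{1}{4}n - 11\delta n \;>\; \tfrac{1}{2}|S'|,
\]
the last inequality using $|S'| \ge (\tfrac{2}{3}-\delta)n$ and $\delta < \tfrac{1}{144}$. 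Now Dirac's Theorem gives a blue Hamilton cycle in $S'$, and since the minimum degree is strictly above $\tfrac{1}{2}|S'|$, Bondy's Theorem (Theorem~\ref{Thm: bondy}) forces $B_G[S']$ to be pancyclic. This immediately gives blue $C_\ell$ for all $\ell \in [3,|S'|] \supseteq [3,(\tfrac{2}{3}-\delta)n]$, including all odd lengths and $\ell=3$, with no Blow-up Lemma, no cylinder construction, and no separate short-cycle argument.

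The moral: once you know red is sparse on $S$ in the reduced graph, it is sparse on the corresponding vertex set in $G$ itself, and then Dirac plus Bondy on $B_G[S']$ does everything at once.
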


\begin{lemma}
\label{Lemma: pathological case}
Suppose that there is a partition $V(H)=U_{1} \cup \cdots \cup U_{4}$ with $\underset{i}{\min} |U_{i}| \geq \left(\frac{1}{4}-3\delta\right)k$ such that there are no red edges from $U_{1} \cup U_{2}$ to $U_{3} \cup U_{4}$ and no blue edges from $U_{1} \cup U_{3}$ to $U_{2} \cup U_{4}$. Then $G$ contains a monochromatic cycle of length at least $\left(1-59\delta\right)n$ and monochromatic cycles of length $\ell$ for all $\ell \in \left[4,\left\lceil\frac{1}{2}n\right\rceil\right]$.
\end{lemma}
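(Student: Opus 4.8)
The plan is to exploit the rigid structure forced by the partition $V(H)=U_1\cup\cdots\cup U_4$ and push it back through regularity to $G$. First I observe what the hypothesis says about $H$: every edge of $H$ inside $U_1\cup U_2$ or inside $U_3\cup U_4$ that crosses between those two parts must be blue, and every edge crossing between $U_1\cup U_3$ and $U_2\cup U_4$ must be red. Combining these, an edge between $U_1$ and $U_4$ can be any colour, an edge between $U_2$ and $U_3$ can be any colour, but an edge between $U_1$ and $U_2$ is blue, between $U_3$ and $U_4$ is blue, between $U_1$ and $U_3$ is red, and between $U_2$ and $U_4$ is red; in short, up to the ambiguous $U_1$--$U_4$ and $U_2$--$U_3$ edges, $H$ is exactly the reduced-graph analogue of a $2$-bipartite $2$-edge colouring of $K_{k/4,k/4,k/4,k/4}$. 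Since $\delta(G)\ge\frac34 n$ the reduced graph has $\delta(H)\ge(\frac34-2d-\epsilon)k$, which together with $\min_i|U_i|\ge(\frac14-3\delta)k$ forces $H$ to be very nearly complete $4$-partite with these four parts; in particular the blue graph $B_H$ restricted to $(U_1\cup U_2)\cup(U_3\cup U_4)$ contains a near-perfect bipartite graph between a set of size $\approx\frac12 k$ and another of size $\approx\frac12 k$, and likewise for the red graph on the other bipartition.

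Next I build the long cycle. Work in, say, the blue colour with the bipartition $(U_1\cup U_2, U_3\cup U_4)$ of $H$. Choose a maximum blue matching $M$ in $H$ saturating all but $O(\delta k)$ vertices of $\min(|U_1\cup U_2|,|U_3\cup U_4|)$; since the blue bipartite graph here is essentially complete, $M$ has $\ge(\frac12-O(\delta))k$ edges, all inside one blue component. Refine the partition so that each pair of $M$ is $(\epsilon',\delta')$-super-regular (standard: move an $\epsilon$-fraction of each cluster into $V_0$), apply the Blow-up Lemma (Theorem \ref{Thm: blow-up lemma}) to each super-regular pair to find a blue Hamilton path of that pair's clusters, then use the Embedding Lemma (Theorem \ref{Thm: embedding lemma}) along a connecting structure inside the blue component — exactly the "join the paths into a cycle" mechanism described in Section \ref{Sec: regularity section} — to splice these $\approx\frac12 k$ paths, each on $\approx 2m$ vertices, into one blue cycle. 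Counting the vertices actually covered: we use almost all of two clusters per matching edge, so the cycle has length at least $(1-O(\delta))\cdot 2\cdot\frac{n}{2}=(1-O(\delta))n$; tracking the constants (the $\epsilon|V|$ in $V_0$, the $O(\delta k)$ unsaturated clusters, the reserve left for the connecting paths) gives the stated $(1-59\delta)n$.

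For the cycles of all lengths $\ell\in[4,\lceil\frac12 n\rceil]$, I would use the flexibility of the blow-up/embedding machinery to produce monochromatic cycles of every length up to the circumference: one standard way is that once we have the long blue cycle threading through super-regular pairs, we can shortcut it — replace a long sub-path through one pair by a short one, using that in an $(\epsilon,\delta)$-super-regular pair one can find a path between two prescribed clusters of any prescribed length between a constant and $\approx 2m$. This yields blue cycles of all lengths in $[\ell_0, (1-59\delta)n]$ for some constant $\ell_0$; the remaining short lengths $[4,\ell_0]$ are handled by Lemma \ref{Lemma: odd cycles} (note that in case (iii) we are certainly not in the extremal configuration $K_{p,p,p,p}$ with a $2$-bipartite colouring, since $\delta(G)\ge\frac34 n$ with equality only for that graph, and if $G$ were that graph with that colouring then $H$ would be bipartite in each colour, contradicting the long-cycle conclusion we just derived — or more simply, one applies Lemma \ref{Lemma: odd cycles} directly and rules out the extremal output using the existence of the long monochromatic cycle).

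The main obstacle I anticipate is the bookkeeping in the second paragraph: making the "almost complete $4$-partite" statement about $H$ quantitatively precise from $\delta(H)\ge(\frac34-2d-\epsilon)k$ and $\min_i|U_i|\ge(\frac14-3\delta)k$, and then controlling exactly how many vertices of $G$ are lost at each stage (clusters in $V_0$, clusters not covered by the matching, vertices sacrificed to make pairs super-regular, vertices used by the connecting paths in the Embedding Lemma) so that the final cycle length meets the $(1-59\delta)n$ bound. None of these steps is conceptually hard, but the colour bipartition must be chosen consistently (one fixed colour throughout, with its fixed bipartition of $H$) and the ambiguous $U_1$--$U_4$ and $U_2$--$U_3$ edges must be allocated to whichever colour keeps the chosen bipartition's matching as large as possible.
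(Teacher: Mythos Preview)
Your reading of the hypothesis is off in a way that matters. From ``no red edges from $U_1\cup U_2$ to $U_3\cup U_4$'' and ``no blue edges from $U_1\cup U_3$ to $U_2\cup U_4$'' one deduces: $U_1$--$U_2$ and $U_3$--$U_4$ edges are \emph{red}, $U_1$--$U_3$ and $U_2$--$U_4$ edges are \emph{blue}, and there are \emph{no} edges at all between $U_1$ and $U_4$ or between $U_2$ and $U_3$ (both restrictions apply, not neither). Consequently $B_H$ splits into two pieces $U_1\cup U_3$ and $U_2\cup U_4$, each of size about $k/2$, and $R_H$ likewise splits into $U_1\cup U_2$ and $U_3\cup U_4$. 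So neither colour has a connected component on more than about $k/2$ vertices of $H$.

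This kills your main step. Your long cycle is built by taking a near-perfect monochromatic matching of $H$, making the pairs super-regular, blowing up to paths, and then splicing ``inside one blue component'' via the Embedding Lemma. But in case~(iii) every monochromatic component of $H$ has order at most $(\tfrac12+O(\delta))k$, so the matching-in-a-component machinery of Section~\ref{Sec: regularity section} can produce a monochromatic cycle of length at most about $n/2$, not $(1-59\delta)n$. Indeed, case~(iii) of Lemma~\ref{Lemma: reduced graph} is singled out precisely because it is the situation where case~(i) --- the large matching in a single component --- fails.

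The paper's proof is accordingly quite different from what you propose: it abandons the Blow-up Lemma entirely and works directly in $G$. Passing to $W_i=\bigcup_{v_j\in U_i}V_j$, the minimum-degree condition forces each of the four bipartite graphs $R_G[W_1,W_2]$, $R_G[W_3,W_4]$, $B_G[W_1,W_3]$, $B_G[W_2,W_4]$ to be nearly complete, and this is used by hand (Claim~\ref{Claim: no cutvertex}) to build paths of all lengths up to $(\tfrac12-29\delta)n$ between any two prescribed endpoints. The long cycle then comes from linking two such half-size paths across the cut, and the crucial point is that the links live in $G$, not in $H$: they are either edges inside some $W_i$, or paths through $V_0$. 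A Menger argument shows that if no two disjoint red links exist then a single vertex $v_R$ separates $W_1\cup W_2$ from $W_3\cup W_4$ in red, and symmetrically for blue; Claim~\ref{Claim: no small disconnecting set} rules out both happening simultaneously, so in at least one colour two disjoint links exist and the $(1-59\delta)n$ cycle follows. The remaining range $[(\tfrac12-29\delta)n,\lceil n/2\rceil]$ is handled not by shortening the long cycle but by applying Chv\'atal's theorem and Bondy's theorem to the appropriate red component of $G-\{v_R\}$. Your proposal does not touch any of this; the connecting-across-the-cut step is the missing idea.
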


We now prove Theorem \ref{Thm: main result} by applying the lemmas above to the reduced graph obtained from the Regularity Lemma.

\begin{proof}[Proof of Theorem \ref{Thm: main result}]
Choose $0<\delta <\frac{1}{144}$ and $d \ll \delta$ (where, as usual, $\ll$ means sufficiently smaller than). Let $\epsilon := \frac{1}{2}\epsilon\left(\frac{d}{2},2,2,\frac{d}{2}\right)$ be defined from $d$ as in the Blow-up Lemma; we may also assume that $\epsilon \ll d$ by taking a smaller $\epsilon$ if necessary. In particular, we may choose $\epsilon$ and $d$ so that we may apply the Embedding Lemma. Also, by Lemma \ref{Lemma: odd cycles}, we have, for any fixed integer $K$, that either $G \cong K_{n/4,n/4,n/4,n/4}$ and the colouring is a $2$-bipartite $2$-edge colouring or $G$ contains a monochromatic $C_{\ell}$ for all $\ell \in [4,K]$. Hence it is sufficient to prove that either $G \cong K_{n/4,n/4,n/4,n/4}$ and the colouring is a $2$-bipartite $2$-edge colouring, or there is some fixed integer $K$ such that $G$ contains a monochromatic $C_{\ell}$ for all $\ell \in \left[K, \left\lceil\frac{n}{2}\right\rceil \right]$.

Let $G$ be a graph of order $n$ with $\delta\left(G\right) \geq \frac{3}{4}n$. We apply the degree form of the $2$-colour Regularity Lemma to $G$, with parameters $d$ and $\epsilon$. Let $V_{0},V_{1},\ldots,V_{k}$ be the clusters (with $|V_{i}|=m$ for $i\geq 1$), and $G'$ be the subgraph of $G$ defined by Theorem \ref{Thm: regularity}. Let $H$ be the $(\epsilon,d)$-reduced graph defined from $G'$ earlier, with $2$-edge colouring $E(H)=E(R_{H})\cup E (B_{H})$.

We have $\delta\left(G'\right) \geq \left(\frac{3}{4}-d-\epsilon\right)n$. Suppose that $\delta\left(H\right) < \left(\frac{3}{4}-\delta\right)k$: then there is some $i\geq 1$ with $d_{H}\left(V_{i}\right)<\left(\frac{3}{4}-\delta\right)k$. For a vertex $v\in V_{i}$ , it has neighbours in $G'$ only in $V_{0}$, or in $V_{j}$ for those $j$ such that $v_{i}v_{j}$ is an edge of $H$. Hence 
\begin{align*}
d_{G'}(v) &< \left(\frac{3}{4}-\delta\right)km +|V_{0}|  \leq \left(\frac{3}{4}-\delta+\epsilon \right)n
\end{align*}
which is a contradiction, as $\delta \gg d + 2 \epsilon$. Hence $\delta\left(H\right) \geq \left(\frac{3}{4}-\delta\right)k$.

Applying Lemma \ref{Lemma: reduced graph}, we have one of the following.
\begin{enumerate}[(i)]
\item There is a component of $R_{H}$ or $B_{H}$ which contains a matching on at least $\left( \frac{2}{3} +\delta \right)k$ vertices.
\item There is a set $S$ of order at least $\left(\frac{2}{3}-\frac{\delta}{2}\right)k$ such that either $\Delta(R_{H}[S]) \leq 10\delta k$ or $\Delta(B_{H}[S]) \leq 10\delta k$.
\item There is a partition $V(H)=U_{1} \cup \cdots \cup U_{4}$ with $\underset{i}{\min} |U_{i}| \geq \left(\frac{1}{4}-3\delta\right)k$ such that there are no blue edges from $U_{1} \cup U_{2}$ to $U_{3} \cup U_{4}$ and no red edges from $U_{1} \cup U_{3}$ to $U_{2} \cup U_{4}$.
\end{enumerate}

If we are in the second or third case, we are done immediately by Lemma \ref{Lemma: large sparse set} and Lemma \ref{Lemma: pathological case} respectively. Hence we assume that there is a component of $R_{H}$ or $B_{H}$ which contains a matching on at least $\left( \frac{2}{3} +\delta \right)k$ vertices. By Lemma \ref{Lemma: large matching in large component or with odd cycle}, we may assume that there is a component $R'_{H}$ of $R_{H}$ which contains a matching on at least $\left( \frac{1}{2} +\delta \right)k$ vertices, and that either $R_{H}'$ contains an odd cycle or $|R_{H}'|\geq \left(1-5\delta\right)k$.

Take a matching in $R'_{H}$ with a maximal number of vertices. Let $r$ be the number of edges in the matching and $C_{1},C_{2},\ldots,C_{r}$ be the edges of the matching, with $C_{i}=v_{i,1}v_{i,2}$. For $2 \leq i \leq r$, let $P_{i}$ be a shortest path of $R_{H}$ from $C_{i}$ to $C_{1}$. We may asssume that the end-point of $P_{i}$ in $C_{i}$ is $v_{i,1}$. For all $i\geq 2$, let $v_{1,j_{i}}$ be the endpoint of $P_{i}$ in $C_{1}$. Note that the path $P_{i}$ may pass through vertices of $C_{j}$ for $j \neq i$.

We wish to apply the Blow-up Lemma to the clusters $V_{i,j}$ corresponding to vertices $v_{i,j}$. However, the Blow-up Lemma applies to super-regular pairs and currently we may only assume regularity. We show that by removing a small number of vertices, we may assume that all the edges of our odd-extended matching are super-regular pairs. Let $W_{1,1}$ be the set of vertices of $V_{1,1}$ with at least $\left(d-\epsilon\right)m$ neighbours in $V_{1,2}$ and $W_{1,2}$ be the set of vertices of $V_{1,2}$ with at least $\left(d-\epsilon\right)m$ neighbours in $V_{1,1}$. Then it is immediate from regularity that $|W_{1,j}| \geq (1-\epsilon)|V_{1,j}|$ for $j \in \{ 1,2\}$. We can check that $(W_{1,1},W_{1,2})$ is a $(\frac{3}{2}\epsilon,d-2\epsilon)$-super-regular pair. Similarly, we can obtain $W_{i,j}\subseteq V_{i,j}$ for all $i \geq 2$ and $j \in \{1,2\}$ such that $|W_{i,j}|\geq (1-\epsilon) |V_{i,j}|$ and $\left(W_{i,1},W_{i,2}\right)$ is $\left(\frac{3}{2}\epsilon,d-2\epsilon\right)$-super-regular for $R_{G}$.

Suppose first that there is an odd cycle in the component $R'_{H}$ of $R_{H}$. Either this cycle contains $v_{1,1}$, or there is a path from the cycle to $v_{1,1}$. Using Theorem \ref{Thm: embedding lemma}, we can find a red path $Q_{1}\subseteq R_{G}$  of odd length at most $2k$ between two vertices $v_{1}$ and $v_{2}$ of $W_{1,1}$. We can use the super-regular pair $(W_{1,1},W_{1,2})$ to find a red path $Q'_{1}$ of length four from $v_{1}$ to $v_{2}$, which does not intersect $Q_{1}$.

We now construct vertex-disjoint red paths $Q_{2}, Q_{2}',\ldots,Q_{r},Q_{r}'$ in $V\setminus\left(V(Q_{1})\cup V(Q_{2})\right)$ such that, for all $2\leq i \leq r$, both $Q_{i}$ and $Q'_{i}$ start in $W_{i,1}\subseteq V_{i,1}$ and pass through the clusters $V_{\ell}$ corresponding to vertices of $P_{i}$, before terminating in $W_{1,j_{i}}\subseteq V_{1,j_{1}}$. Indeed suppose that we have constructed such paths $Q_{2}, Q_{2}',\ldots,Q_{s-1},Q_{s-1}'$ for some $2\leq s \leq r$. For $i\geq1$, each path $Q_{i}$ or $Q_{i}'$ uses each cluster at most twice and so between them the paths $Q_{1},Q_{1}',Q_{2}, Q_{2}',\ldots,Q_{s-1},Q_{s-1}'$ contain at most $4(s-1)\leq 2k$ vertices in each cluster. Hence, we may remove the vertices of $Q_{1},Q_{1}',Q_{2}, Q_{2}',\ldots,Q_{s-1},Q_{s-1}'$ without affecting the regularity of pairs of clusters. Hence, we can find the paths $Q_{s}$ and $Q_{s}'$ by Theorem \ref{Thm: embedding lemma}.

Each path $Q_{1},Q_{1}',Q_{2}, Q_{2}',\ldots,Q_{r},Q_{r}'$ has at most two internal vertices contained in each $W_{i,j}$. Letting $W_{i,j}'$ be the vertices of $W_{i,j}$ not used as an internal vertex of some path $Q_{2}, Q_{2}',\ldots,Q_{r},Q_{r}'$ , we thus have $|W_{i,j}'|\geq |W_{i,j}|-2k \geq (1-3\epsilon)m$. Deleting vertices where appropriate, we may assume that each $W_{i,j}'$ has order $m'=\lceil(1-3\epsilon)m\rceil$. Note that the pairs $\left(W'_{i,1},W'_{i,2}\right)$ are $\left(2\epsilon,\frac{\delta}{2}\right)$-super-regular.

\begin{figure}[h!]
\centering
\includegraphics{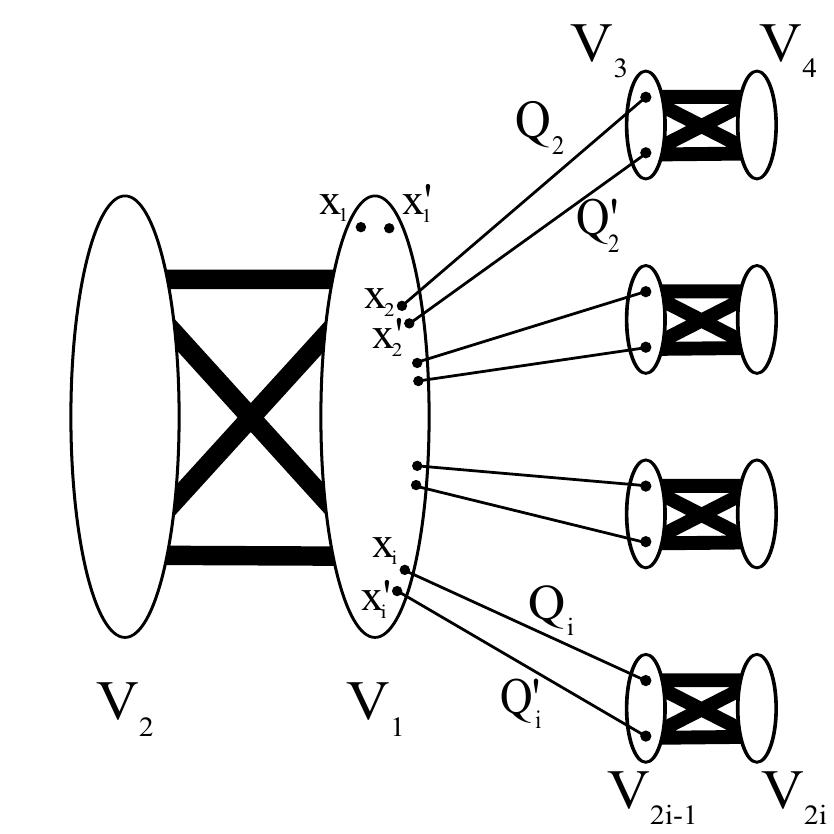}
\caption{The graph $G''$.}
\label{Fig: Applying blow-up}
\end{figure} 

Consider the subgraph $G''$ of $R_{G'}$ consisting of the super-regular pairs $\left(W'_{i,1},W'_{i,2}\right)$ and the paths $Q_{1},Q_{1'},Q_{2}, Q_{2}',\ldots,Q_{r},Q_{r}'$, as shown in Figure \ref{Fig: Applying blow-up}. (The super-regular pairs are shown with thick lines and the paths with thin lines.) Note that each of the paths contains at most $k$ vertices, and there are at most $k$ of them. However the union of the $W_{i,j}'$ has order at least $\left(\frac{1}{2}+\frac{\delta}{2}\right)n$. Let $K=3+|Q_{1}|$. If we replaced all of the super-regular pairs in $G''$ by complete bipartite graphs, it is clear that the resultant graph would contain $C_{\ell}$ for all $\ell \in \left[K, \left\lceil \frac{1}{2}n\right\rceil\right]$. By applying the Blow-up Lemma individually to each pair $\left(W'_{i,1},W'_{i,2}\right)$, we may thus embed $C_{\ell}$ into $ R_{G'}$ for all $\ell \in \left[K, \left\lceil \frac{1}{2}n\right\rceil\right]$. Note that we must sometimes restrict vertices of our embedding to lie in the neighbourhood of endvertices of some $Q_{\ell}$. However, there are only a bounded number ($O\left(k\right)$) of such restrictions, and they are all to sets of order at least $\frac{1}{2}dm'$, as we have made our pairs super-regular. Hence we are done in the case that $R_{H}'$ contains an odd cycle.

Suppose now that the component $R'_{H}$ of $R_{H}$ contains no odd cycles and hence $|R'_{H}|\geq \left(1-5\delta\right)k$. Then $R_{H}'$ is bipartite, with classes $H_{1}$ and $H_{2}$. Applying the Blow-up Lemma as above, we deduce that $C_{\ell} \subseteq R_{G'}$ for all even $\ell \in \left[4,  \left(\frac{1}{2}+\frac{\delta}{2}\right)n \right]$. Hence we are done if we can show that there is a fixed integer $K$ such that $G$ contains a monochromatic $C_{\ell}$ for all odd $\ell \in \left[K,\left\lceil \frac{1}{2}n\right\rceil \right]$.

As $R'_{H}$ is a connected component with order at most $k$ there is a red path of length at most $k-1$ between any pair of vertices in $R_{H}'$. Let $X$ be the union of all the clusters in $H_{1}$, and $Y$ be the union of all the clusters in $H_{2}$, so that $X$ and $Y$ are subsets of $V$. Using the Blow-up Lemma as above we see that, after removing at most $\epsilon|X|$ vertices from $X$ and $\epsilon|Y|$ vertices from $Y$, the following holds.
\begin{itemize}
\item Between any vertices $u,v\in X$ there are, in $R_{G'}[X\cup Y]$, paths of length $\ell$ for all even $\ell \in [2k,\left(\frac{1}{2} + \frac{ \delta}{2}\right)n]$ (and similarly for $Y$).
\item Between any $u\in X$ and any $v \in Y$ there are, in $R_{G'}[X \cup Y]$, paths of length $\ell$ for all odd $\ell \in [2k-1,\left(\frac{1}{2} + \frac{ \delta}{2}\right)n]$.
\end{itemize}

If either $X$ or $Y$ contains an internal red edge, then we have cycles of all odd lengths between $2k+1$ and $\left(\frac{1}{2} + \delta\right)n$. Hence we may assume that $R_{G}[X \cup Y]$ is bipartite. Then
\begin{align*}
|X \cup Y | &\geq \left(1-5\delta\right)km(1-\epsilon) \\
&\geq (1-6\delta )n.
\end{align*}
However, if $\max \{|X|,|Y|\} >\frac{1}{2}n$, we are done by Lemma \ref{Lemma: no large independent sets}. Hence we may assume that $\min \{ |X| , |Y| \} \geq \left(\frac{1}{2}-6\delta\right)n$.

If any vertex $v$ of $V \setminus (X \cup Y)$ has at least one red neighbour in both $X$ and $Y$, then using the paths between a red neighbour of $v$ in $X$ and a red neighbour of $v$ in $Y$, we have cycles of all odd lengths between $2k+1$ and $\left(\frac{1}{2} + \delta\right)n $. Hence all vertices of $V \setminus (X \cup Y)$ have no red neighbours in at least one of $X$ or $Y$.

Define disjoint sets $X'$ and $Y'$ by letting $X'$ be the set of vertices of $V \setminus (X \cup Y)$ with at least two red neighbours in $Y$, and $Y'$ be the set of vertices of $V \setminus (X \cup Y)$ with at least two red neighbours in $X$. Then there are no red edges between $X'$ and $X$ or between $Y'$ and $Y$. If there is a red edge $uv$ within $X'$, let $u'$ and $v'$ be distinct vertices of $Y$ with $uu'$ and $vv'$ both red edges. Then $u'uvv'$ is a red path of length three between vertices of $Y$, with internal vertices in $V\setminus \left(X\cup Y\right)$. Using the $u'$-$v'$ paths obtained from the Blow-up Lemma, we have red cycles of all odd lengths between $2k+3$ and $\left(\frac{1}{2} + \delta\right)n$.

So we may assume that $R_{G}[X \cup X' \cup Y \cup Y']$ is bipartite with classes $X \cup X'$ and $Y \cup Y'$. We may assume that both $X\cup X'$ and $Y \cup Y'$ have order at most $\frac{1}{2}n$, else we are done by Lemma \ref{Lemma: no large independent sets}.

A vertex not in $X \cup X' \cup Y \cup Y'$ has at least $\left(\frac{3}{4}-6 \delta \right)n$ neighbours in $X \cup Y$. Let $X''$ be the set of vertices not in $X \cup X' \cup Y \cup Y'$ with at least $\left(\frac{3}{8}-3\delta\right)n$ neighbours in $X$, and $Y''$ be the set of vertices not in $X \cup X' \cup Y \cup Y'$ with at least $\left(\frac{3}{8}-3\delta\right)n$ neighbours in $Y$. Letting $X_{0}=X \cup X' \cup X''$ and $Y_{0}=Y \cup Y' \cup Y''$ we see that $V$ is the (not necessarily disjoint) union of $X_{0}$ and $Y_{0}$.

Without loss of generality, we may suppose that $|X_{0}| \geq \frac{1}{2}n$. By definition, all vertices in $X''$ have at least $\left(\frac{3}{8}-3\delta\right)n$ neighbours in $X$. However vertices in $X''$ have at most one red neighbour in $X$, else they would have been in $Y'$. All vertices in $X \cup X'$ have at most $\frac{1}{4}n$ non-neighbours in $G$ and so at least $|X_{0}|- \frac{1}{4}n $ neighbours in $X_{0}$. As there at most $|X''|$ red edges between $X''$ and $X$, the set $X'''$ of vertices in $X$ with a red neighbour in $X''$ has order at most $|X''|$. Vertices in $X \setminus X'''$ have no red neighbours in $X_{0}$, while vertices in $X' \cup X'''$ have no red neighbours in $X_{0} \setminus X''$. Hence
\begin{align}
\label{eqn: degrees in X_{0}}
d_{B[X_{0}]} (v) \geq \begin{cases}
|X_{0}|-\frac{1}{4}n &\quad v \in X \setminus X''' \\
|X_{0}|-\frac{1}{4}n- |X''| &\quad v \in X''' \cup X' \\
\left(\frac{3}{8}-3\delta\right)n -1 &\quad v \in X'' .
\end{cases}
\end{align}
Since $|X' \cup X'' \cup X'''| \leq 6\delta n$, the conditions of Theorem \ref{Thm: chvatal} are satisfied on the graph $B_{G}[X_{0}]$ and so $B_{G}[X_{0}]$ is Hamiltonian.

However, using \eqref{eqn: degrees in X_{0}}, we have
\begin{align*}
e\left(B_{G}[X_{0}]\right) &\geq  \frac{1}{2}\left(|X_{0}|-\frac{1}{4}n\right)\left|X\setminus X'''\right|+ \left(\left(\frac{3}{16}-\frac{3\delta}{2}\right)n-\frac{1}{2} \right)|X''| \\
&\qquad + \frac{1}{2}\left(|X_{0}|-\frac{1}{4}n-|X''|\right)\left(|X'''| +| X'|\right)\\
&= \frac{1}{2}|X_{0}|\left(|X_{0}|-\frac{1}{4}n\right)  \\
&\qquad +\left(\left(\frac{3}{16}-\frac{3\delta}{2}\right)n-\frac{1}{2}-\frac{1}{4}|X_{0}|-\frac{1}{2}\left(|X''' |+| X'|\right)\right)|X''| \\
&\geq \frac{1}{4}|X_{0}|^{2} +  \left(\left(\frac{1}{16}-6\delta\right)n-\frac{1}{2}\right)|X''|.
\end{align*}
Here we have used
\begin{align*}
\frac{1}{4}|X_{0}|+\frac{1}{2}\left(|X''' |+| X'|\right) &\leq \frac{1}{4}\left(|X|+|X'|\right) +\frac{1}{2}|X'| + \frac{3}{4}|X''| \\
&\leq \frac{1}{8}n + \frac{3}{4} |V\setminus \left(X \cup Y\right) |\\
&\leq \left(\frac{1}{8}+\frac{9\delta}{2}\right)n.
\end{align*}

Hence, from Theorem \ref{Thm: bondy} we see that either $B_{G}[X_{0}]$ is pancyclic, in which case $C_{\ell}\subseteq B_{G}$ for all $\ell \in [3,|X_{0}|]$, or $B_{G}[X_{0}]\cong K_{|X_{0}|/2,|X_{0}|/2}$ and $e(B_{G}[X_{0}])=\frac{1}{4}|X_{0}|^{2}$. Hence, in the latter case, $X''=\emptyset$. Similarly, if $|Y_{0}| \geq \frac{1}{2}n$, then either $B_{G}[Y_{0}]$ is pancyclic, or $Y''=\emptyset$. Hence we may assume that $X''=Y''=\emptyset$ and hence $B_{G}$ is bipartite. Thus by Lemma \ref{Lemma: no large independent sets}, we are done.

If we are in the second or third case of Lemma \ref{Lemma: reduced graph}, we are done by Lemma \ref{Lemma: large sparse set} and Lemma \ref{Lemma: pathological case} respectively.
\end{proof}

\section{Proof of Lemmas}
\label{Sec: proof of extremal lemma}

In this section we shall prove the lemmas used in the proof of Theorem \ref{Thm: main result}. Later in the section, we will prove Lemma \ref{Lemma: large sparse set} and Lemma \ref{Lemma: pathological case}. However, we begin with the proofs of Lemma \ref{Lemma: reduced graph} and Lemma \ref{Lemma: large matching in large component or with odd cycle}. Throughout both proofs we shall assume that $R'$ is a largest component of $R$, and that $B'$ is a largest component of $B$. We let $W_{1} =V(B') \cap V(R')$, $W_{2}=V(R')\setminus V(B')$, $W_{3} = V(B') \setminus V(R')$ and $W_{4} = V -(W_{1} \cup W_{2} \cup W_{3})$. We will need the following claim about the component structure.

\begin{claim}
\label{Claim: components cover G}
Let $0<\delta < \frac{1}{36} $ and let $G$ be a graph of sufficiently large order $n$ with $\delta(G) \geq \left( \frac{3}{4} -\delta \right)n$. Suppose that we are given a $2$-edge colouring $E(G)=E(R) \cup E(B)$. Then one of the following holds.
\begin{itemize}
\item One of $R$ or $B$ is connected.
\item $V(G)=V(R') \cup V(B')$ and both $R'$ and $B'$ have order at least $\left(\frac{3}{4}-\delta \right)n$.
\item There is a partition $V(G)=U_{1} \cup \cdots \cup U_{4}$ with $\underset{i}{\min} |U_{i}| \geq \left(\frac{1}{4}-3\delta\right)n$ such that there are no red edges from $U_{1} \cup U_{2}$ to $U_{3} \cup U_{4}$ and no blue edges from $U_{1} \cup U_{3}$ to $U_{2} \cup U_{4}$.
\end{itemize}
\end{claim}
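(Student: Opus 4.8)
The plan is to assume that neither $R$ nor $B$ is connected (if one of them is, the first conclusion holds) and to deduce either the second or the third conclusion. Throughout I will use the sets $W_{1},W_{2},W_{3},W_{4}$ already defined from the largest components $R'$ and $B'$.

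The key structural input is that $R'$ and $B'$ are \emph{connected components}. This forces the following: there are no $G$-edges between $W_{1}$ and $W_{4}$, nor between $W_{2}$ and $W_{3}$; every edge between $W_{1}$ and $W_{2}$ and between $W_{3}$ and $W_{4}$ is red; and every edge between $W_{1}$ and $W_{3}$ and between $W_{2}$ and $W_{4}$ is blue. Each of these is immediate: an edge of the wrong colour (or any edge, in the $W_{1}$--$W_{4}$ and $W_{2}$--$W_{3}$ cases) would join $R'$ or $B'$ to a vertex outside it, contradicting that these are full components. A useful consequence is that the choice $(U_{1},U_{2},U_{3},U_{4})=(W_{1},W_{2},W_{3},W_{4})$ already satisfies the colour requirement of the third conclusion, so all that remains is to control the sizes $|W_{i}|$. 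I will also record the elementary fact that for every vertex $v$, writing $R_{v},B_{v}$ for the red and blue components through $v$, we have $N_{G}(v)\subseteq V(R_{v})\cup V(B_{v})$, since every neighbour of $v$ shares an edge of one colour with it. Combined with $\delta(G)\ge(\frac34-\delta)n$, each vertex has at most $(\frac14+\delta)n$ non-neighbours, so any two non-empty $W_{i},W_{j}$ with no edges between them satisfy $|W_{i}|,|W_{j}|\le(\frac14+\delta)n$.

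The argument then splits on which of the $W_{i}$ are empty. If $W_{4}=\emptyset$, then (otherwise $R$ or $B$ is connected) $W_{2}$ and $W_{3}$ are non-empty, so $|W_{2}|,|W_{3}|\le(\frac14+\delta)n$, whence $|V(R')|=n-|W_{3}|$ and $|V(B')|=n-|W_{2}|$ are both at least $(\frac34-\delta)n$: this is the second conclusion. If $W_{4}\ne\emptyset$ and all four $W_{i}$ are non-empty, then the no-edge facts give $|W_{i}|\le(\frac14+\delta)n$ for every $i$, and since the $W_{i}$ partition $V$ this forces $|W_{i}|\ge n-3(\frac14+\delta)n=(\frac14-3\delta)n$ for all $i$, which is exactly the third conclusion with $U_{i}=W_{i}$. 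The remaining configurations---$W_{4}\ne\emptyset$ with $W_{1}=\emptyset$, or with $W_{1}\ne\emptyset$ but $W_{2}$ or $W_{3}$ empty---must be shown impossible: in each case one checks that a suitable union of two of the $W_{i}$ has no edge of a given colour leaving it (for instance, when $W_{1}=\emptyset$ one has $V(R')=W_{2}$ and $V(B')=W_{3}$, and $W_{3}\cup W_{4}$ is a union of red components while $W_{2}\cup W_{4}$ is a union of blue components), so that for $v\in W_{4}$ both $|V(R_{v})|$ and $|V(B_{v})|$ are at most $(\frac14+\delta)n$; this contradicts $\deg(v)\ge(\frac34-\delta)n$ since $\delta<\frac1{12}$.

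I expect these impossibility cases to be the part requiring the most care: the bounds on $|V(R_{v})|$ and $|V(B_{v})|$ hinge on correctly identifying which unions of the $W_{i}$ are closed under a given colour, which in turn is a bookkeeping exercise with the colour facts established in the first step. The rest of the proof is routine manipulation of the four-part partition and the minimum-degree bound.
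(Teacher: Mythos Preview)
Your proposal is correct and reaches the same conclusion as the paper, but the organisation differs in a mildly interesting way. The paper begins with a preliminary step: assuming neither $R$ nor $B$ is connected, it first rules out $|R'|\le(\tfrac{5}{12}-\delta)n$ by an argument about the two blue components, thereby securing $|R'|,|B'|>(\tfrac{5}{12}-\delta)n$ before doing the $W_4=\emptyset$ versus $W_4\ne\emptyset$ split. The lower bound on $|R'|$ and $|B'|$ is then what guarantees $W_2,W_3\ne\emptyset$ in the $W_4\ne\emptyset$ case. You bypass this preliminary entirely and instead run a direct case analysis on which of the $W_i$ are empty, disposing of the degenerate patterns ($W_1=\emptyset$, or $W_1\ne\emptyset$ with $W_2$ or $W_3$ empty) by the observation that for $v\in W_4$ both $|V(R_v)|$ and $|V(B_v)|$ are at most $(\tfrac14+\delta)n$, since $|R_v|\le|R'|$, $|B_v|\le|B'|$, and the relevant component sizes are bounded via the no-edge pairs. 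This is slightly more elementary and arguably cleaner; the paper's route has the advantage of giving an explicit quantitative lower bound on the largest components along the way, which is not needed here but is a fact of independent interest. Both arguments hinge on the same structural observations about edges between the $W_i$, so the difference is one of packaging rather than substance.
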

\begin{proof}
If neither of the above statements holds, then both $R$ and $B$ are disconnected. Suppose first that $|R'|\leq \left(\frac{5}{12} - \delta\right)n$. Then $\Delta(R) < \left(\frac{5}{12} - \delta\right)n$ and hence $\delta(B) > \frac{1}{3}n$. Since $B$ is disconnected, we see that $B$ has exactly two components $B_{1}$ and $B_{2}$ with $\frac{1}{3} n< |B_{2}| \leq |B_{1}| < \frac{2}{3}n$. Then $W_{i} =V(B_{i}) \cap V(R')$, for $i \in \{1,2\}$.

Suppose that $W_{i} \neq \emptyset$, for some $i \in \{1,2\}$. Let $v \in W_{i}$. Then $v$ has no neighbours outside $R' \cup B_{i}$, and so $\Gamma_{G} \left(v \right) \subseteq R' \cup B_{i} $. Hence, as $W_{3-i}= R' \setminus B_{i}$, 
\begin{align*}
|W_{3-i}| &\geq \left|\Gamma_{G} \left(v \right) \right| - |B_{i}| > \left(\frac{1}{12}-\delta\right)n.
\end{align*}
In particular $W_{3-i} \neq \emptyset$, and so $W_{1}$ is non-empty if and only if $W_{2}$ is non-empty. As $V(R')=W_{1} \cup W_{2}$, we see that both $W_{1}$ and $W_{2}$ are therefore non-empty.

But then
\begin{align*}
|R'| =|W_{1}|+|W_{2}| &\geq \left(\frac{3}{4}-\delta\right)n -|B_{1}| + \left(\frac{3}{4}-\delta\right)n -|B_{2}| \\
&= \left(\frac{1}{2}-2\delta \right)n.
\end{align*}
This contradicts our assumption. We may therefore assume that $|R'|> \left(\frac{5}{12} - \delta\right)n$, and similarly $|B'|> \left(\frac{5}{12} - \delta\right)n$.

Note that there are no edges (of either colour) between $W_{1}$ and $W_{4}$ or between $W_{2}$ and $W_{3}$. If $W_{4} = \emptyset$, then $V(G)=V(R') \cup V(B')$. As neither $R$ nor $B$ is connected, we must have that $W_{2}$ is non-empty. Let $v \in W_{2}$: since $\Gamma_{G}(v) \cap W_{3} = \emptyset$, we see that $|W_{3}| \leq \left(\frac{1}{4} + \delta \right)n$ and so $|R'| \geq \left(\frac{3}{4}-\delta \right)n$. We may similarly show that $|B'| \geq \left(\frac{3}{4}-\delta \right)n$.

If, however, $W_{4}\neq \emptyset$, choose $x \in W_{4}$. As $\Gamma_{G}(x) \cap W_{1}=\emptyset$, we have $|W_{1}| \leq \left(\frac{1}{4}+\delta\right)n$. However both $R'$ and $B'$ have order at least $\left(\frac{5}{12}-\delta\right)n$ and hence both $W_{2}$ and $W_{3}$ are non-empty. Thus, arguing as for $W_{4}$, we see that both $W_{2}$ and $W_{3}$ have order at most $\left(\frac{1}{4}+\delta\right)n$ and so $W_{1}$ is non-empty. This in turn implies that $W_{4}$ has order at most $\left(\frac{1}{4}+\delta\right)n$. Hence each $W_{i}$ has order at least $\left(\frac{1}{4}-3\delta\right)n$.
\end{proof}

\begin{proof}[Proof of Lemma \ref{Lemma: reduced graph}]
We assume throughout that $n$ is sufficiently large. Let $0 < \delta < \frac{1}{36}$. Suppose that $G$ is a graph of order $n$ with $\delta(G) \geq \left( \frac{3}{4} -\delta \right)n$ and that we are given a $2$-edge colouring $E(G)=E(R) \cup E(B)$.

If $\underset{i}{\min} |W_{i}| \geq \left(\frac{1}{4} -3 \delta \right)n$, then we are in the third case of Lemma \ref{Lemma: reduced graph}. Hence, we assume throughout that 
\begin{align}
\label{Eqn: upper bound on Wi}
\underset{i}{\min} |W_{i}| < \left(\frac{1}{4} -3 \delta \right)n.
\end{align}
We will also assume that neither $R'$ nor $B'$ contains a matching on at least $\left(\frac{2}{3}+\delta\right)n$ vertices, and refer to this as our main assumption.

We are aiming to show that there is a large set on which one of the colours has a very low density. We show that the orders of $B'$ and $R'$ can not take certain values.

\begin{claim}\label{Claim: B' of 'medium' size}
Either $|B'| < \left(\frac{1}{3}+\frac{\delta}{2}\right)n$ or $|B'| > \left(\frac{2}{3}-\frac{\delta}{2}\right)n$.
\end{claim}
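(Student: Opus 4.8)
The plan is to rule out the range $\left(\frac13+\frac{\delta}{2}\right)n \le |B'| \le \left(\frac23-\frac{\delta}{2}\right)n$ by showing that in this regime $B'$ (being a component, hence with no blue edges leaving it) has so many vertices of high internal degree that it contains a near-spanning matching, contradicting the main assumption. First I would recall the setup: $B'$ is a largest blue component, $R'$ a largest red component, and $W_1,\dots,W_4$ partition $V(G)$ as defined before the claim, with $V(B')=W_1\cup W_3$ and $V(R')=W_1\cup W_2$. By Claim~\ref{Claim: components cover G}, unless one colour is connected (in which case that colour's unique component trivially has a huge matching by Dirac/Chv\'atal after deleting few low-degree vertices, giving the contradiction, so we may exclude this) or we are in the partition case (excluded by \eqref{Eqn: upper bound on Wi}), we have $V(G)=V(R')\cup V(B')$ with both components of order at least $\left(\frac34-\delta\right)n$. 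So in fact $|B'|\ge\left(\frac34-\delta\right)n > \left(\frac23-\frac\delta2\right)n$ already — wait, that would make the claim immediate; the subtlety is that Claim~\ref{Claim: components cover G} has three cases, and the third (partition) case is precisely what \eqref{Eqn: upper bound on Wi} says we are NOT in, while the "connected" case still needs the matching argument. So the real content is: handle the connected case and the $V(G)=V(R')\cup V(B')$ case, and show that $|B'|$ in the forbidden middle range is impossible in each.

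In the case $V(G)=V(R')\cup V(B')$, I would argue as follows. Every vertex $v\in W_3 = V(B')\setminus V(R')$ has all its $G$-neighbours inside $B'$ (no blue edge leaves $B'$, and $v\notin R'$ so its red edges also stay... actually $v$'s red edges go to its red component, which is some component of $R$ contained in $V(G)\setminus$(other stuff); more carefully, since every vertex lies in $V(R')\cup V(B')$ and $v\notin V(R')$, $v$'s red neighbours lie in $V(B')$ too, because any red neighbour $u$ of $v$ shares a red component with $v$, which is disjoint from $R'$, hence $u\notin V(R')$, hence $u\in V(B')$). Thus $d_{G[B']}(v)\ge\left(\frac34-\delta\right)n$ for all $v\in W_3$, so $d_{G[B']}(v)\ge |B'| - \frac14 n - \delta n$... and vertices of $W_1$ have at least $|B'|-\frac14n$ neighbours in $B'$ too since they have at most $\frac14n$ non-neighbours in all of $G$. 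If $|B'|>\frac12 n$ this makes $G[B']$ have minimum degree essentially $|B'|-\frac14n > \frac12|B'|$ after deleting a few vertices, hence Hamiltonian by Dirac, hence a matching on $\ge|B'|-1\ge\left(\frac23-\frac\delta2\right)n-1$ vertices — but I need $\left(\frac23+\delta\right)n$, so this only works comfortably when $|B'|$ is close to $n$; for $|B'|$ near $\frac23 n$ it is not enough, which is exactly why the forbidden interval stops at $\frac23-\frac\delta2$ rather than going higher. For $|B'|\le\frac12 n$ in the middle range $\left(\frac13+\frac\delta2\right)n\le|B'|\le\frac12 n$: then $W_2=V(R')\setminus V(B')$ is large, $|W_2|\ge\left(\frac34-\delta\right)n-|B'|\ge\left(\frac14-\delta\right)n$, and by the symmetric argument $R'$ has large order and one can try to get a big red matching instead. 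The bookkeeping is to show that whichever of $B',R'$ is the larger must exceed $\left(\frac23-\frac\delta2\right)n$, so if $B'$ is in the middle, $R'$ is large, and then apply the Hamiltonicity/matching argument to $R'$ — but that gives a red matching on $\left(\frac23+\delta\right)n$ vertices, contradicting the main assumption.

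In the connected case — say $R$ is connected — I would delete the set of vertices of $G$ with fewer than, say, $\frac{2}{3}n$ red neighbours; there are few of these (at most $O(\delta n)$ by a degree count, since $\delta(G)\ge\left(\frac34-\delta\right)n$ and each such vertex has $\ge\left(\frac34-\delta\right)n - \frac23 n > \frac{1}{12}n - \delta n$ blue neighbours, and double-counting blue edges against $\Delta(B)$... actually the cleanest is: if $\Delta(B)$ is small then $\delta(R)$ is large and we are trivially done, so assume $\Delta(B)\ge\left(\frac{1}{12}-\delta\right)n$, but then since $B$ is disconnected — wait, $R$ connected doesn't say $B$ disconnected). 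Let me instead say: in the connected case for $R$, every vertex has $\le\frac14 n$ non-neighbours, and by Chv\'atal's theorem (Theorem~\ref{Thm: chvatal}) applied to $R$ after removing the few vertices of low red degree, $R$ restricted to a set of size $\ge(1-o(1))n$ is Hamiltonian, giving a red matching on $(1-o(1))n \ge \left(\frac23+\delta\right)n$ vertices — again contradicting the main assumption. The upshot is that the main assumption forces us out of the connected case and out of the "$|B'|$ in the middle" subcase of the other case.

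The main obstacle I anticipate is the careful degree accounting in the case $V(G)=V(R')\cup V(B')$ with $|B'|$ near $\frac12 n$: one must track the at-most-$\frac14n$ non-neighbours of $W_1$-vertices together with the fact that $W_3$-vertices have all neighbours in $B'$, and convert this into a minimum-degree bound on $G[B']$ (not on a monochromatic graph) good enough to apply Tutte's defect theorem (Theorem~\ref{Thm: Tutte}) or Chv\'atal's theorem to extract a matching of the required size; and then — crucially — argue that the matching, though it lives in $G[B']$, can be taken inside $B$ itself, which requires knowing that $G[B']$ is "mostly blue" in the relevant sense. The honest route is probably: show the \emph{larger} of $|R'|,|B'|$ must be at least $\left(\frac34-\delta\right)n$ (from Claim~\ref{Claim: components cover G}), so if $|B'|$ lies in the forbidden middle interval then $|R'|\ge\left(\frac34-\delta\right)n$; then $W_2$ is large, every vertex of $W_2$ has all $G$-neighbours — and in particular is adjacent in $R$ (since $W_2$-vertices are in $R'$ and not $B'$, their edges to $B'\setminus W_1 = W_3$ vanish and... ) — to get that $R[W_1\cup W_2 \text{ minus a few vertices}]$ has minimum degree $>\frac12$ its order, hence is Hamiltonian by Dirac, producing a red matching on $>\left(\frac23+\delta\right)n$ vertices, the desired contradiction. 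This sidesteps the mixed-colour degree issue by working inside $R$ directly.
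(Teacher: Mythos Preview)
Your reduction is correct: if $|B'|$ lies in the forbidden interval $\left[(\tfrac13+\tfrac\delta2)n,\,(\tfrac23-\tfrac\delta2)n\right]$, then case~(ii) of Claim~\ref{Claim: components cover G} is impossible (it would force $|B'|\ge(\tfrac34-\delta)n$) and case~(iii) is excluded by~\eqref{Eqn: upper bound on Wi}, so $R$ must be connected. But from that point on, every route you try for producing a red matching on $(\tfrac23+\delta)n$ vertices breaks down. Your Chv\'atal/Dirac approaches all require that $\delta(R)$, or at least the red degree of all but $o(n)$ vertices, exceed roughly $\tfrac12 n$; this is simply not available. A vertex of $V(B')$ can have essentially all of its $(\tfrac34-\delta)n$ neighbours inside $V(B')$ coloured blue, leaving red degree only about $(\tfrac14-\delta)n$, and there is no double-counting argument that limits how many such vertices exist (you begin one and then, correctly, abandon it). Your ``honest route'' at the end still tacitly assumes the same unavailable minimum-degree bound on $R[W_1\cup W_2]=R$.

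The idea you are missing is that the red matching should be sought not in $R$ as a whole but in the \emph{bipartite} red graph $F$ between $V(B')$ and $V\setminus V(B')$: since $B'$ is a blue component, every crossing edge is red. Each vertex has at most $(\tfrac14+\delta)n$ non-neighbours in $G$, so for subsets $S$ of the smaller side $V_1$ one gets $|\Gamma_F(S)|\ge |S|-\max\{0,(\tfrac12+2\delta)n-|V_2|\}$, and the defect form of Hall's theorem yields a red matching on at least $(\tfrac23+\delta)n$ vertices (using $|V_1|\ge(\tfrac13+\tfrac\delta2)n$). This is the paper's argument; it sidesteps entirely the problem of vertices with low total red degree, which is precisely what sinks your attempts.
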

\begin{proof}
Suppose that $\left(\frac{1}{3}+\frac{\delta}{2}\right)n \leq |B'|\leq \left(\frac{2}{3}-\frac{\delta}{2}\right)n$. Then $R$ is connected, by Claim \ref{Claim: components cover G}.

Let $V_{1}$ be the smaller of $V(B')$ and $V\setminus V(B')$. Let $V_{2}=V \setminus V_{1}$ and $F$ be the bipartite graph between $V_{1}$ and $V_{2}$. There are no blue edges between $V_{1}$ and $V_{2}$ and so all edges of $F$ are red. For a subset $S$ of $V_{1}$ we shall find a lower bound on $|\Gamma_{F}(S)|$ by splitting into the cases that $|S|> \left(\frac{1}{4}+\delta\right)n$ and $|S|\leq \left(\frac{1}{4} + \delta \right)n$.

If $S \subseteq V_{1}$ and $|S|>\left(\frac{1}{4}+\delta\right)n$, consider a vertex $v \in V_{2}$. Then, as $d_{G}\left(v\right) \geq \left(\frac{3}{4}-\delta\right)n$, $v$ must have a neighbour in $S$. Hence $\Gamma_{F}(S)=V_{2}$, and so $|\Gamma_{F}(S)| =|V_{2}| \geq |V_{1}| \geq |S|$.

If $S \subseteq V_{1}$ and $|S|\leq \left(\frac{1}{4}+\delta\right)n$, then any vertex in $S$ has at least $|V_{2}|-\left(\frac{1}{4}+\delta\right)n$ neighbours in $V_{2}$. Hence
\begin{align*}
|\Gamma_{F}(S)| &\geq |V_{2}|- \left(\frac{1}{4}+\delta\right)n \\
&=|S| -\left( \left(\frac{1}{4}+\delta\right)n+|S| -|V_{2}|\right) \\
&\geq |S|- \left( \left(\frac{1}{2}+2\delta\right)n -|V_{2}|\right).
\end{align*}

Thus by the defect form of Hall's Theorem, $F$ contains a matching with at least
\begin{displaymath}
|V_{1}| - \max \left\{0,\left( \left(\frac{1}{2}+2\delta\right)n -|V_{2}|\right)\right\}
\end{displaymath}
edges. As $|V_{1}|+|V_{2}|=n$ and $|V_{1}| \geq \left(\frac{1}{3}+\frac{\delta}{2}\right)n$, this matching contains at least $\left( \frac{2}{3} +\delta \right)n$ vertices. As $R$ is connected, this contradicts our main assumption.
\end{proof}

By Claim \ref{Claim: components cover G} and the assumption \eqref{Eqn: upper bound on Wi}, we may assume that either one of $R$ or $B$ is connected or $V(G)=V(R') \cup V(B')$ and $\min \{ |V(R')|,|V(B')|\} \geq \left(\frac{3}{4}-\delta\right)n$. In either case, there will be a monochromatic component of order at least $\left(\frac{3}{4}-\delta \right)n$. Let $X_{R}=V\setminus V(R')$ and $X_{B}=V\setminus V(B')$. We make the following definitions when $R'$ or $B'$ is large.
\begin{defn}
Fix an integer \emph{$t\geq \delta^{-1}$}. Suppose that $|R'| \geq \left(\frac{2}{3}+\delta\right)n$.
\begin{itemize}
\item Let \emph{$S_{R} \subseteq V(R')$} be a set such that
\begin{displaymath}
q\left(R\left[V(R')-S_{R}\right]\right) > |S_{R}| + |V(R')|- \left(\frac{2}{3} + \delta \right)n.
\end{displaymath}
Note that, in view of our main assumption, such a set exists by Theorem \ref{Thm: Tutte}.
\item For $1\leq i \leq n$, let $T_{R,i}$ be the set of vertices which lie in components of $R\left[V(R')-S_{R}\right]$ of order $i$.
\item Let $T_{R}=\bigcup_{1\leq i \leq t}T_{R,i}$.
\end{itemize}
If $|B'| \geq \left(\frac{2}{3}+\delta\right)n$, we define $S_{B}$, $T_{B,i}$ and $T_{B}$ similarly.
\end{defn}

We shall use the following result throughout. Note that, as with Claim \ref{Claim: B' of 'medium' size}, we may exchange the roles of $R$ and $B$ to obtain a symmetrical version of this result.
\begin{claim}
\label{Claim: greatly separating set}
Suppose that $|V(R')| \geq \left(\frac{2}{3}+\delta\right)n$. Then $|S_{R}| < \left(\frac{1}{3} + \frac{1}{2}\delta\right)n$ and 
\begin{displaymath}
|X_{R} \cup T_{R}| >|S_{R}| +\left(\frac{1}{3}-2\delta\right)n.
\end{displaymath}
Further, if $C_{B}$ is a component of $B$ with $|C_{B}|\leq \left(\frac{5}{12} - 2 \delta \right)n$, then $C_{B} \subseteq S_{R}$.
\end{claim}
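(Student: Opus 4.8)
Claim \ref{Claim: greatly separating set}. The plan is to extract three separate inequalities from the hypothesis that $|V(R')|\geq(\frac23+\delta)n$ together with the defining property of $S_R$, the bound on $\delta(G)$, and the main assumption that no monochromatic component has a matching on $(\frac23+\delta)n$ vertices.

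First I would bound $|S_R|$. By definition, $q(R[V(R')-S_R]) > |S_R| + |V(R')| - (\frac23+\delta)n$. The odd components of $R[V(R')-S_R]$ are in particular independent in $R$ across the separator, but more usefully: each vertex $v$ lying in such a component has $\Gamma_G(v)\subseteq V(R')$ (as $v\in V(R')$ and $R'$ is a component of $R$... wait, $v$ could have blue neighbours outside). The cleaner route: since $R[V(R')-S_R]$ has more than $|S_R| + |V(R')| - (\frac23+\delta)n$ odd components, and each has at least one vertex, we get $|V(R')| - |S_R| \geq q(R[V(R')-S_R]) > |S_R| + |V(R')| - (\frac23+\delta)n$, which rearranges to $2|S_R| < (\frac23+\delta)n$, i.e. $|S_R| < (\frac13+\frac\delta2)n$. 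That disposes of the first inequality immediately.

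Next I would prove $|X_R\cup T_R| > |S_R| + (\frac13-2\delta)n$. The point is that the components of $R[V(R')-S_R]$ counted by $q$ are mostly \emph{small}: if there were many large odd components, their number would be limited by $|V(R')|/(t+1) < \delta n$, which is far smaller than $|S_R| + |V(R')| - (\frac23+\delta)n$ when $|V(R')|$ is large, so in fact the number of \emph{small} odd components (those of order $\leq t$) already exceeds $|S_R| + |V(R')| - (\frac23+\delta)n - \delta n$. Each such small odd component contributes at least one vertex to $T_R$, but I should be more careful: I want to count vertices of $T_R$, and a component of order $i\le t$ contributes $i\ge 1$ vertices while contributing $1$ to the relevant count only if $i$ is odd. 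So $|T_R| \geq (\text{number of small odd components}) > |S_R| + |V(R')| - (\frac23+\delta)n - \delta n$. Since $|X_R| = n - |V(R')|$, adding gives $|X_R\cup T_R| = |X_R| + |T_R| > |S_R| + n - (\frac23+\delta)n - \delta n = |S_R| + (\frac13 - 2\delta)n$, as required. (The disjointness $X_R\cap T_R=\emptyset$ is clear since $T_R\subseteq V(R')$.)

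Finally, for the last assertion: suppose $C_B$ is a blue component with $|C_B|\leq (\frac{5}{12}-2\delta)n$ but $C_B\not\subseteq S_R$, so there is $v\in C_B$ with $v\in V(R')-S_R$. Then $v$ lies in some component $D$ of $R[V(R')-S_R]$. Every red neighbour of $v$ in $V(R')-S_R$ lies in $D$; every blue neighbour of $v$ lies in $C_B$. Hence $\Gamma_G(v) \subseteq S_R \cup D \cup C_B$, giving $|D| \geq \delta(G) - |S_R| - |C_B| > (\frac34-\delta)n - (\frac13+\frac\delta2)n - (\frac{5}{12}-2\delta)n = \frac{\delta}{2}n$, so $D$ is a large component. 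The main obstacle I anticipate is making the counting in the second and third parts interact correctly with the Tutte-type inequality while keeping track of small error terms; but since each $W_i$-type bound and each degree bound loses only $O(\delta n)$, the slack built into the constants ($\frac13-2\delta$ versus the sharp $\frac13$) should absorb it. To finish the third part one would then argue that a single large red component $D$ together with $C_B$ and the structure already forces a contradiction with the main assumption — most likely by building a large matching inside $R'$ using $D$ plus a Hall-type argument on the bipartite graph between $C_B$ and its complement, exactly as in the proof of Claim \ref{Claim: B' of 'medium' size} — so no such $v$ can exist and $C_B\subseteq S_R$.
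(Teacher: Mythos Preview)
Your first two parts are correct and match the paper's argument essentially line for line.

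The third part has a genuine gap. Your computation $\Gamma_G(v)\subseteq S_R\cup D\cup C_B$ and hence $|D|>\tfrac{\delta}{2}n$ is fine, but it does not lead anywhere: there is nothing preventing $R[V(R')-S_R]$ from having one (or several) components of size far larger than $t$, so ``$D$ is large'' is not a contradiction, and your proposed finish via a Hall-type matching argument between $C_B$ and its complement is not the right mechanism here. The defining inequality for $S_R$ controls the \emph{number} of odd components, not their individual sizes, so you cannot leverage a single big $D$ against the main assumption.

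What the paper does instead is re-use the second inequality you already proved. First it shows $C_B\cap(X_R\cup T_R)=\emptyset$: every $w\in C_B$ has red degree at least $\delta(G)-|C_B|+1>(\tfrac13+\delta)n$, whereas every vertex of $X_R$ has red degree $<(\tfrac13-\delta)n$ and every vertex of $T_R$ has red degree at most $|S_R|+t<(\tfrac13+\tfrac{\delta}{2})n+t$. With that in hand, take $v\in C_B\setminus S_R$. Then $v\in V(R')$ so it has no red neighbours in $X_R$; since $v\notin S_R\cup T_R$ it has no red neighbours in $T_R$; and all its blue neighbours lie in $C_B$, which is disjoint from $X_R\cup T_R$. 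Hence $v$ has \emph{no} neighbours at all in $X_R\cup T_R$, giving
\[
d_G(v)\le n-|X_R\cup T_R|<\left(\tfrac{2}{3}+2\delta\right)n-|S_R|\le\left(\tfrac{2}{3}+2\delta\right)n,
\]
which contradicts $\delta(G)\ge(\tfrac34-\delta)n$ once $\delta<\tfrac{1}{36}$. Your own bound $|D|>\tfrac{\delta}{2}n>t$ is in fact exactly the statement $v\notin T_R$ needed above, so you were one observation away: the missing piece is the preliminary step $C_B\cap(X_R\cup T_R)=\emptyset$, after which the contradiction is a direct degree count rather than a matching argument.
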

\begin{proof}
All vertices of $V(R')$ lie in $S_{R}$ or some component of $R\left[V(R')-S_{R}\right]$. Hence
\begin{align*}
 |V(R')| &\geq |S_{R}|+ q\left(R\left[V(R')-S_{R}\right]\right) \\
&> 2 |S_{R}| +|V(R')|- \left(\frac{2}{3} + \delta \right)n.
\end{align*}
This implies that $|S_{R}| < \left(\frac{1}{3} + \frac{1}{2}\delta\right)n$.

There are at most $|T_{R}|$ components of $R\left[V(R')-S_{R}\right]$ of order at most $t$. However, there are at most $\frac{1}{t}n\leq \delta n$ components of $R\left[V(R')-S_{R}\right]$ of order at least $t$. Hence $|T_{R}| \geq q\left(R\left[V(R')-S_{R}\right]\right) - \delta n$. As $X_{R}$ and $T_{R}$ are disjoint, we have
\begin{align*}
|X_{R}\cup T_{R}| &\geq n-|V(R')|+ q\left(R\left[V(R')-S_{R}\right]\right) -\delta n \\
&> \left(1-\delta\right)n -|V(R')|+|S_{R}| +|V(R')|- \left(\frac{2}{3} + \delta \right)n \\
&= |S_{R}| +\left(\frac{1}{3}-2\delta\right)n.
\end{align*}

Finally, suppose that $C_{B}$ is a component of $B$ with $|C_{B}| \leq \left(\frac{5}{12} - 2 \delta \right)n$. A vertex in $C_{B}$ has blue degree at most $|C_{B}|-1$. Hence any vertex in $C_{B}$ must have red degree at least
\begin{align}
\nonumber \delta(G) - |C_{B}|+1  &\geq \left(\frac{3}{4}-\delta\right)n - \left(\frac{5}{12} - 2 \delta \right)n+1 \\
\label{Eqn: red degree of vtx in small blue cmpt} &= \left(\frac{1}{3} +\delta \right) n +1.
\end{align}

A vertex in $X_{R}$ has red degree at most
\begin{displaymath}
|X_{R}|-1 \leq \left(\frac{1}{3}-\delta \right)n-1.
\end{displaymath}
However, a vertex in $T_{R}$ is in a component of $R\left[V(R')-S_{R}\right]$ of order at most $t$. Hence, for all $v \in T_{R}$,
\begin{align*}
d_{R}(v) &\leq t+|S_{R}| \\
&< \left(\frac{1}{3} + \frac{1}{2}\delta\right)n+t.
\end{align*}
Hence \eqref{Eqn: red degree of vtx in small blue cmpt} and $\delta(G) \geq \left(\frac{3}{4}-\delta \right)n$ imply that $ C_{B}\cap\left(X_{R} \cup T_{R}\right) =\emptyset$.

Suppose that there exists $v \in C_{B} \setminus S_{R}$. A blue neighbour of $v$ lies in $C_{B}$, and so $v$ has no blue neighbours in $ X_{R} \cup T_{R}$. However, $C_{B} \subseteq V(R')$ and so $v$ has no red neighbours in $X_{R}$. The only vertices with red neighbours in $T_{R}$ are those in $S_{R} \cup T_{R}$, and so we see that $v$ also has no red neighbours in $T_{R}$. Hence $v$ has no neighbours in $X_{R} \cup T_{R}$, and so
\begin{align*}
d_{G}(v) &\leq n - |X_{R} \cup T_{R}|\\
&< \left(\frac{2}{3}+2\delta\right)n.
\end{align*}
This contradicts $\delta(G) \geq \left(\frac{3}{4}-\delta \right)n$, and so $C_{B} \subseteq S_{R}$.
\end{proof}

We may thus assume that $S_{R}$ is not much bigger than $\frac{1}{3}n$. The following result shows that if $S_{R}$ has order approaching $\frac{1}{3}n$ and $R'$ is very large, $R[V\setminus S_{R}]$ is the required graph with low density.
\begin{claim}
\label{Claim: S is large gives sparse set}
Suppose that $|V(R')| \geq \left(1-\frac{5\delta}{2}\right)n$ and that $|S_{R}|\geq \left(\frac{1}{3}-2\delta\right)n$. Then $R[V\setminus S_{R}]$ is a graph on at least $\left(\frac{2}{3}-\frac{\delta}{2}\right)n$ vertices with maximum degree at most $10\delta n$.
\end{claim}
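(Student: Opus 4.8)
The plan is to read off both conclusions from the structural information already extracted in Claim~\ref{Claim: greatly separating set}. The lower bound on the order of $V\setminus S_R$ is immediate: that claim gives $|S_R|<\left(\frac13+\frac12\delta\right)n$, so $|V\setminus S_R|=n-|S_R|>\left(\frac23-\frac12\delta\right)n$, which is (more than) what is required.

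For the degree bound, I would partition $V\setminus S_R$ into three parts: $X_R=V\setminus V(R')$, the set $T_R$ of vertices lying in components of $R[V(R')-S_R]$ of order at most $t$, and the set $L_R$ of vertices lying in components of $R[V(R')-S_R]$ of order greater than $t$. The hypothesis $|V(R')|\geq\left(1-\frac{5\delta}{2}\right)n$ gives $|X_R|\leq\frac{5\delta}{2}n$ directly. For $L_R$, observe that $X_R$, $T_R$ and $L_R$ partition $V\setminus S_R$, so $|L_R|=(n-|S_R|)-|X_R\cup T_R|$; Claim~\ref{Claim: greatly separating set} gives $|X_R\cup T_R|>|S_R|+\left(\frac13-2\delta\right)n$, and combining this with the hypothesis $|S_R|\geq\left(\frac13-2\delta\right)n$ yields $|L_R|<6\delta n$.

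Next I would bound the degree of an arbitrary vertex $v\in V\setminus S_R$ in $R[V\setminus S_R]$ by cases according to which part of the partition contains it. If $v\in X_R$, then $v$ lies in a red component other than $R'$, so it has no red neighbour in $V(R')\supseteq T_R\cup L_R$, and hence at most $|X_R|-1\leq\frac{5\delta}{2}n$ red neighbours in $V\setminus S_R$. If $v\in T_R$, then $v\in V(R')$, so $v$ has no red neighbour in $X_R$, and its red neighbours inside $V(R')-S_R$ all lie in its own component, which has order at most $t$; thus its degree in $R[V\setminus S_R]$ is at most $t-1$. If $v\in L_R$, again $v\in V(R')$ so $v$ has no red neighbour in $X_R$, and its red neighbours inside $V(R')-S_R$ lie in its own component, hence in $L_R$, giving degree at most $|L_R|-1<6\delta n$. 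Since $t$ is a fixed integer (depending only on $\delta$), for $n$ sufficiently large all three bounds are at most $10\delta n$, as claimed.

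There is no genuine obstacle here: the substantive work was already done in proving Claim~\ref{Claim: greatly separating set}, and what remains is careful bookkeeping of where red edges can go — specifically that vertices of $X_R$ send no red edge into $V(R')$, and that a vertex lying in a component of $R[V(R')-S_R]$ sends red edges within $V(R')-S_R$ only inside that one component.
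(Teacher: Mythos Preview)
Your proof is correct. The order bound is obtained exactly as in the paper, and your three-case degree analysis is sound: vertices of $X_R$ see only $X_R$ in red, vertices of $T_R$ see only their own small component, and vertices of $L_R$ see only their own component inside $L_R$, whose total size you bound by $6\delta n$ using $|X_R\cup T_R|>|S_R|+\left(\tfrac13-2\delta\right)n$ together with $|S_R|\geq\left(\tfrac13-2\delta\right)n$.

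The paper takes a different route for the degree bound. Rather than partitioning into $X_R,T_R,L_R$ and invoking only the conclusion of Claim~\ref{Claim: greatly separating set}, it returns to the defining inequality $q\left(R[V(R')-S_R]\right)>|S_R|+|V(R')|-\left(\tfrac23+\delta\right)n$ and, by a counting argument over odd components, extracts the stronger fact that $|T_{R,1}|>\left(\tfrac23-8\delta\right)n$: almost all of $V\setminus S_R$ consists of vertices that are actually \emph{isolated} in $R[V\setminus S_R]$. Since $|V\setminus S_R|\leq\left(\tfrac23+2\delta\right)n$, every vertex has at most $10\delta n$ non-isolated vertices available as potential neighbours, giving the bound. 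Your argument is more modular and avoids the odd-component arithmetic entirely, at the cost of not yielding the finer structural statement that the sparse set is nearly edgeless; the paper's argument gives that extra information but re-opens the Tutte inequality. For the purposes of the claim as stated, both are equally valid.
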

\begin{proof}
That $R[V\setminus S_{R}]$ is a graph of order at least $\left(\frac{2}{3}-\frac{\delta}{2}\right)n$ follows immediately from Claim \ref{Claim: greatly separating set}.

For all $1\leq i \leq n$, there are exactly $\frac{1}{i}|T_{R,i}|$ components of order $i$ in $R\left[V(R')-S_{R}\right]$. Hence
\begin{align*}
\sum_{i\geq 1} \frac{1}{2i-1}|T_{R,2i-1}| &= q\left(R\left[V(R')-S_{R}\right]\right) \\
&> |S_{R}| + |V(R')|- \left(\frac{2}{3} + \delta \right)n.
\end{align*}
However,
\begin{align*}
\sum_{i\geq 1} \frac{1}{2i-1}|T_{R,2i-1}| &\leq |T_{R,1}| + \frac{1}{3} \sum_{i\geq 2} |T_{R,2i-1}| \\
&\leq |T_{R,1}| +\frac{1}{3}\left( |V(R')|- |S_{R}| -|T_{R,1}|\right).
\end{align*}
Combining these inequalities, and using the bounds on $|V(R')|$ and $|S_{R}|$, we have
\begin{align*}
\frac{2}{3}|T_{R,1}| &> \frac{4}{3} |S_{R}| + \frac{2}{3}|V(R')| - \left(\frac{2}{3} + \delta \right)n \\
&\geq \frac{4}{3} \left(\frac{1}{3}-2\delta\right)n + \frac{2}{3}\left(1-\frac{5\delta}{2}\right)n - \left(\frac{2}{3} + \delta \right)n \\
&= \left(\frac{4}{9} -\frac{16\delta}{3} \right)n.
\end{align*}
Hence $|T_{R,1}| > \left(\frac{2}{3} -8\delta\right) n$.

However, $T_{R,1}$ is a set of isolated vertices in $R[V\setminus S_{R}]$. As $|V\setminus S_{R}| \leq \left(\frac{2}{3}+2\delta\right)n$, we see that $R[V\setminus S_{R}]$ has maximum degree at most $10\delta n$.
\end{proof}

We may now complete the proof of Lemma \ref{Lemma: reduced graph}, using the preceding claims. By Claim \ref{Claim: components cover G}, we may without loss of generality assume that $|V(R')|\geq \left( \frac{3}{4} -\delta \right)n$. We divide into cases depending on the order of $B'$.

If $|B'| < \left(\frac{1}{3} + \frac{\delta}{2} \right)n$, then any component of $B$ has order at most $\left(\frac{1}{3} + \frac{\delta}{2} \right)n$. By Claim \ref{Claim: greatly separating set}, $|S_{R}| < \left(\frac{1}{3} + \frac{1}{2}\delta\right)n$ and $S_{R}$ contains any small blue components. Thus $S_{R}$ contains all components of $B$, and hence has order $n$, a contradiction.

We can not have $\left(\frac{1}{3}+\frac{\delta}{2}\right)n \leq |B'|\leq \left(\frac{2}{3}-\frac{\delta}{2}\right)n$ by Claim \ref{Claim: B' of 'medium' size}.

If $\left(\frac{2}{3}-\frac{\delta}{2}\right)n< |B'| < \left(\frac{2}{3} + \delta\right)n$, then, by Claim \ref{Claim: components cover G}, $R$ is connected. Also, all components of $B$ other than $B'$ have order at most $\left(\frac{1}{3}+\frac{\delta}{2}\right)n$. Hence, by Claim \ref{Claim: greatly separating set}, $S_{R}$ contains $X_{B}$ and so $|S_{R}|> \left(\frac{1}{3}-\delta \right)n$. Thus we are done by Claim \ref{Claim: S is large gives sparse set}.

Finally, suppose that $|B'|\geq \left(\frac{2}{3}+ \delta \right)n$. We may assume that there is a set $S_{B} \subseteq V(B')$ such that
\begin{displaymath}
q\left(B\left[V(B')-S_{B}\right]\right) > |S_{B}| + |V(B')|- \left(\frac{2}{3} + \delta \right)n.
\end{displaymath}
By Claim \ref{Claim: greatly separating set}, we see that $X_{R} \subseteq S_{B}$ and $X_{B} \subseteq S_{R}$.

Suppose that there is a vertex $v \in T_{R} \cap T_{B}$. Then $v$ has at most $|S_{R}|+t$ red neighbours and at most $|S_{B}|+t$ blue neighbours. As $|S_{R}|$ and $|S_{B}|$ both have order at most $\left(\frac{1}{3} +\frac{\delta}{2}\right)n$, this contradicts the minimal degree of $G$. Hence $T_{R} \cap T_{B}=\emptyset$.

Suppose that $T_{B}\setminus S_{R}$ is non-empty and let $v\in T_{B}\setminus S_{R}$. As $X_{R} \subseteq S_{B}$, we have $T_{B}\subseteq  V(R')$. Hence $v$ has no red neighbours in $X_{R}$. Vertices in $T_{R}$ only have red neighbours in $T_{R} \cup S_{R}$. However, $T_{B} \cap T_{R} =\emptyset$ and so $v \notin T_{R} \cup S_{R}$. In particular $v$ has no red neighbours in $X_{R} \cup T_{R}$.

Hence, $v$ has at least $|X_{R}\cup T_{R}|-\left(\frac{1}{4}+\delta \right)n$ blue neighbours in $X_{R} \cup T_{R}$, as $v$ has at most $\left(\frac{1}{4}+\delta\right) n$ non-neighbours. However $v\in T_{B}$ and so all but $t$ of its blue neighbours are in $S_{B}$. Hence
\begin{align*}
|S_{B}| \geq |X_{R} \cup T_{R}|-\left(\frac{1}{4}+\delta \right)n-t > |S_{R}| +\left(\frac{1}{12}-3\delta\right)n-t,
\end{align*}
where the second inequality uses Claim \ref{Claim: greatly separating set}.

Similarly, if $T_{R} \setminus S_{B}$ is non-empty, then
\begin{displaymath}
|S_{R}| > |S_{B}| +\left(\frac{1}{12}-3\delta\right)n-t.
\end{displaymath}
As these can not both occur, one of $T_{R}\setminus S_{B}$ or $T_{B} \setminus S_{R}$ is empty.

We assume without loss of generality that $T_{B} \subseteq S_{R}$. Then $S_{R}$ contains the disjoint sets $T_{B}$ and $X_{B}$. Hence, using Claim \ref{Claim: greatly separating set}, again
\begin{displaymath}
|S_{R}| \geq |T_{B}\cup X_{B}| > |S_{B}|+\left(\frac{1}{3}-2\delta\right)n.
\end{displaymath}
Thus $|S_{R}| \geq \left(\frac{1}{3}-2\delta\right)n$. As $|S_{R}|<\left(\frac{1}{3} + \frac{1}{2}\delta\right)n$, we must have $|S_{B}| \leq \frac{5\delta}{2}n$. As $X_{R}\subseteq S_{B}$, we see that $|V(R')| \geq \left(1-\frac{5\delta}{2}\right)n$. Hence, by Claim \ref{Claim: S is large gives sparse set}, we are done.
\end{proof}

We now prove Lemma \ref{Lemma: large matching in large component or with odd cycle}, using similar methods to those used in the proof of Lemma \ref{Lemma: reduced graph}.

\begin{proof}[Proof of Lemma \ref{Lemma: large matching in large component or with odd cycle}]
We assume throughout that $n$ is sufficiently large. Let $0 < \delta < \frac{1}{6}$. Suppose that $G$ is a graph of order $n$ with $\delta(G) \geq \left( \frac{3}{4} -\delta \right)n$ and that we are given a $2$-edge colouring $E(G)=E(R) \cup E(B)$.

Suppose that $R'$ contains a matching on at least $\left(\frac{2}{3}+\delta\right)n$ vertices. We may assume that $|V(R')|<\left(1-5\delta\right)n$ and $R'$ is bipartite with classes $Y_{R}$ and $Z_{R}$, otherwise we are done. Without loss of generality, we assume that $|Z_{R}| \geq |Y_{R}|$ and so $|Y_{R}| \leq \frac{1}{2}|V(R')| < \left(\frac{1}{2}-\frac{5\delta}{2}\right)n$. As each edge of the matching contains one vertex from $Z_{R}$ and one from $Y_{R}$, we have
\begin{align}
\label{Eqn: Order of Y and Z} \left(\frac{1}{3}+\frac{\delta}{2}\right)n \leq |Y_{R}| \leq |Z_{R}| < \left(\frac{2}{3}-\frac{11\delta}{2}\right)n.
\end{align}

As in the proof of Lemma \ref{Lemma: reduced graph}, we let $X_{R} = V \setminus V(R')$ and $X_{B} = V \setminus V(B')$. Note that $5\delta n< |X_{R}| \leq \left(\frac{1}{3}-\delta\right)n$. As $|V(R')| \geq \left(\frac{2}{3} + \delta\right)n$, we are not in case (iii) of Claim \ref{Claim: components cover G}. Hence we may assume that $|X_{B}| \leq \left(\frac{1}{4}-\delta \right)n$ and $X_{B} \cap X_{R} = \emptyset$.

We will first show that $B'$ contains a matching on at least $\left( \frac{1}{2} +\delta \right)n$ vertices. Suppose not; then by Theorem \ref{Thm: Tutte} there is a set $S \subseteq V(B')$ such that
\begin{displaymath}
q\left(B\left[V(B')-S\right]\right) > |S| + |V(B')| - \left(\frac{1}{2} + \delta \right)n.
\end{displaymath}
We will apply the same arguments as used in Claim \ref{Claim: greatly separating set} to the set $S$.

All vertices of $V(B')$ lie in $S$ or some component of $B\left[V(B')-S\right]$. Hence
\begin{align*}
 |V(B')| &\geq |S|+ q\left(B\left[V(B')-S\right]\right) \\
&> 2 |S| +|V(B')|- \left(\frac{1}{2} + \delta \right)n.
\end{align*}
This implies that $|S| < \left(\frac{1}{4} + \frac{1}{2}\delta\right)n$.

Let $t$ be an integer with $t \geq \delta^{-1}$. We let $T$ be the set of vertices in components of $B\left[V(B')-S\right]$ with order at most $t$. Then $|T|\geq q\left(B\left[V(B')-S\right]\right)-\delta n$.

Any vertex in $T$ has blue degree at most 
\begin{displaymath}
|S|+t \leq \left(\frac{1}{4}+\frac{\delta}{2}\right)n +t
\end{displaymath}
and any vertex in $X_{B}$ has blue degree at most 
\begin{displaymath}
|X_{B}|-1 \leq \left(\frac{1}{4}+\delta\right)n-1.
\end{displaymath}
Also any vertex in $X_{R}$ has red degree at most
\begin{displaymath}
|X_{R}|-1 \leq \left(\frac{1}{3}-\delta\right)n-1
\end{displaymath}
and any vertex in $Z_{R}$ has red degree at most
\begin{displaymath}
|Y_{R}| < \left(\frac{1}{2}-\frac{5\delta}{2}\right)n.
\end{displaymath}
Hence any vertex in the intersection of $T \cup X_{B}$ and $Z_{R} \cup X_{R}$ has degree at most $\left(\frac{3}{4}-\frac{3\delta}{2}\right)n-1$. As $\delta(G) \geq \left(\frac{3}{4}-\delta \right)n$, we deduce that $T \cup X_{B}$ does not intersect $Z_{R} \cup X_{R}$. 

Hence $T \cup X_{B} \subseteq Y_{R}$. However, $T$ and $X_{B}$ are disjoint sets, and so
\begin{align*}
|Y_{R}| &\geq |T\cup X_{B}| \\
&\geq q\left(B\left[V(B')-S\right]\right)-\delta n+n -|V(B')| \\
&> |S|+|V(B')| -\left(\frac{1}{2}+\delta \right)n +\left(1-\delta\right)n-|V(B')| \\
&\geq \left(\frac{1}{2}-2\delta\right)n,
\end{align*}
a contradiction. So $B'$ contains a matching on at least $\left( \frac{1}{2} +\delta \right)n$ vertices.

We will show that $B'$ contains all vertices in $X_{R} \cup Z_{R}$. All vertices of $G$ have at most $\left(\frac{1}{4}+\delta\right)n$ non-neighbours, and so any two vertices have at least $\left(\frac{1}{2}-2\delta\right)n$ common neighbours. As $|Y_{R}| \leq \left(\frac{1}{2}-\frac{5\delta}{2}\right)n$, any pair of vertices in $Z_{R}$ have a common neighbour in $V\setminus Y_{R}$. As all vertices in $Z_{R}$ have no red neighbours in $V \setminus Y_{R}$, any two vertices in $Z_{R}$ have a common blue neighbour. Hence all vertices of $Z_{R}$ lie in the same blue component. Similarly, if $|Z_{R}| < \left(\frac{1}{2}-2\delta\right)n$ all vertices of $Y_{R}$ lie in a single blue component.

Any vertex in $X_{R}$ has at most $\left(\frac{1}{4} +\delta\right)n$ non neighbours in both $Y_{R}$ and $Z_{R}$. Thus, by \eqref{Eqn: Order of Y and Z}, every vertex in $X_{R}$ has at least one neighbour in both $Z_{R}$ and $Y_{R}$, which is necessarily blue. Hence $X_{R} \cup Z_{R}$ lies within a component of $B$ and, if $|Z_{R}| < \left(\frac{1}{2}-2\delta\right)n$, then $B$ is connected. If $B$ is not connected, then the component of $B$ containing $X_{R} \cup Z_{R}$ has order at least $n-|Y_{R}| \geq \left(\frac{1}{2}+\frac{5}{2}\delta\right)n$, and hence this component is $B'$.

Suppose now that $|V(B')| < \left(1-5\delta\right)n$ and $B'$ is bipartite, with classes $Z_{B}$ and $Y_{B}$. Both $Z_{B} \cap Z_{R}$ and $Y_{B} \cap Z_{R}$ are independent sets of $G$ and hence have order at most $\left(\frac{1}{4}+\delta \right)n$. If $|Z_{R}| < \left(\frac{1}{2}-2\delta\right)n$, then, by the above argument, $B$ is connected. So we may assume that $|Z_{R}| \geq \left(\frac{1}{2}-2\delta\right)n$. Hence, as $Z_{R} \subseteq B'=Y_{B} \cup Z_{B}$, both $Z_{B} \cap Z_{R}$ and $Y_{B} \cap Z_{R}$ have order at least $\left(\frac{1}{4}-3\delta\right)n$.

Let $v\in X_{R}$. As $X_{R} \cap X_{B} =\emptyset$, we see that $v \in Z_{B} \cup Y_{B}$. We may assume without loss of generality that $v \in Z_{B}$. Then $v$ has no blue neighbours in $Z_{B}$, and no red neighbours in $Z_{R}$. In particular, $v$ has no neighbours of either colour in $Z_{B} \cap Z_{R}$, which is a set of order at least $\left(\frac{1}{4}-3\delta\right)n$. As $v$ has at most $\left(\frac{1}{4} +\delta\right)n$ non-neighbours, it thus has at most $4\delta n$ non-neighbours in $Y_{R} \subseteq V \setminus\left(Z_{R} \cap Z_{B}\right)$. However, all edges from $v$ to $Y_{R}$ are blue. Thus all but at most $4 \delta n$ vertices in $Y_{R}$ lie in the same blue component as $v$. However, $v \in V(B')$, and $X_{R} \cup Z_{R} \subseteq B'$. Hence $B'$ contains all but $4 \delta n $ vertices, contradicting our assumption that $|V(B')| < \left(1-5\delta\right)n$. Hence, either $|V(B')| \geq \left(1-5\delta\right)n$ or $B'$ contains an odd cycle, and we are done.
\end{proof}

We shall now prove Lemma \ref{Lemma: no large independent sets}, Lemma \ref{Lemma: large sparse set} and Lemma \ref{Lemma: pathological case}, which deal with particular cases arising from the reduced graph. In both of these lemmas, we shall be using the graph $G'\subseteq G$ defined by the Regularity Lemma.

\begin{proof}[Proof of Lemma \ref{Lemma: no large independent sets}]
Suppose that $B_{G}$ has an independent set $S$ with $|S|\geq \frac{1}{2}n$. All vertices in $S$ have at most $\frac{1}{4}n$ non-neighbours in $G$, and so $\delta(R_{G}[S]) \geq |S| -\frac{1}{4}n \geq \frac{1}{2}|S|$. Hence, by Theorem \ref{Thm: Dirac}, $R_{G}[S]$ is hamiltonian. However, the minimal degree condition implies that $e(R_{G}[S]) \geq \frac{1}{4}|S|^{2}$ and so, by Theorem \ref{Thm: bondy}, either $R_{G}[S]$ is pancyclic, or $R_{G}[S] \cong K_{|S|/2,|S|/2}$. In the latter case, $\delta(R_{G}[S])=\frac{1}{2}|S|$, and so $|S| =\frac{1}{2}n$. Hence, if $|S|> \frac{1}{2}n$, then $C_{\ell} \subseteq R_{G}$ for all $\ell \in [3, |S|]$.

Suppose that $B_{G}$ is bipartite with classes $S_{1}$ and $S_{2}$, chosen so that $|S_{1}|\geq |S_{2}|$. If $|S_{1}|>\frac{1}{2}n$, then $C_{\ell} \subseteq R_{G}$ for all $\ell \in [3, |S_{1}|]$ and we are done. Hence we may assume that $n$ is even and $|S_{1}| = |S_{2}| = \frac{1}{2}n$. But by the above, we must have either that $C_{\ell} \subseteq R_{G}$ for all $\ell \in [3, \frac{1}{2}n]$, or both $R_{G}[S_{1}]$ and $R_{G}[S_{2}]$ are isomorphic to $K_{n/4,n/4}$. This implies that $n$ is divisible by four. Also, both $B_{G}[S_{1}]$ and $B_{G}[S_{2}]$ are isomorphic to the empty graph and so $G\cong K_{n/4,n/4,n/4,n/4}$.

For $i \in \{ 1,2\}$, let $S_{i,1}$ and $S_{i,2}$ be the independent sets of $G$ partitioning $S_{i}$. Then if $R_{G}$ is not bipartite, without loss of generality, there are red edges between $S_{1,1}$ and both $S_{2,1}$ and $S_{2,2}$. Hence there is a red path of length either two or four between a vertex of $S_{2,1}$ and a vertex of $S_{2,2}$, with all internal vertices in $S_{1}$. As $R_{G}$ is complete between $S_{2,1}$ and $S_{2,2}$, $R_{G}$ contains $C_{\ell}$ for all $\ell \in [4, \left\lceil \frac{1}{2}n\right\rceil]$. If, however, $R_{G}$ is bipartite then the colouring is a $2$-bipartite $2$-edge colouring.
\end{proof}

\begin{proof}[Proof of Lemma \ref{Lemma: large sparse set}]
Let $S' \subseteq V(G)$ be the union of the clusters in $S$. Then $|S'|\geq \left(\frac{2}{3}-\delta\right)n$. Let $v \in S'$. The only red neighbours of $v$ in $G'[S']$ lie in clusters adjacent in $R_{H}$ to the cluster containing $v$. Hence $v$ has at most $10 \delta km \leq 10\delta n$ red neighbours in $G'[S']$. However, the Regularity Lemma implies that $d_{G'}\left(v\right) > d_{G}\left(v\right) - \left(d+\epsilon\right) n$. Hence $\Delta(R_{G}[S']) \leq 11 \delta n$.

Any vertex $v\in S'$ has at least $|S'| -\frac{1}{4}n$ neighbours in $S'$ and so
\begin{align*}
\delta(B_{G}[S']) &\geq |S'| -\left(\frac{1}{4}+11\delta\right)n \\
&> \frac{1}{2} |S'|.
\end{align*}
Thus, by Theorem \ref{Thm: Dirac} and Theorem \ref{Thm: bondy}, the graph $B_{G}[S']$ is pancyclic. In particular $C_{\ell} \subseteq B_{G}$ for all $\ell \in [3, \left(\frac{2}{3}-\delta\right)n]$.
\end{proof}

\begin{proof}[Proof of Lemma \ref{Lemma: pathological case}]
For $1\leq i \leq 4$, let $W_{i}\subseteq V$ be the union of the clusters in $U_{i}$, so that $\underset{i}{\min} |W_{i}| \geq \left(\frac{1}{4}-4\delta\right)n$. Then $V_{0}$ is the set of all remaining vertices. Note that in $G'$ there are no blue edges from $W_{1} \cup W_{3}$ to $W_{2} \cup W_{4}$ and no red edges from $W_{1} \cup W_{2}$ to $W_{3} \cup W_{4}$.

Recall that $\delta(G') \geq \left(\frac{3}{4}-\delta\right)n$ and hence vertices in $W_{1}\cup\cdots\cup W_{4}$ have at most $\left(\frac{1}{4}+\delta\right)n$ non-neighbours in $G'$. For a vertex in $W_{1}$, at least $\left(\frac{1}{4}-4\delta\right)n$ of these non-neighbours are in $W_{4}$. Hence vertices in $W_{1}$ are adjacent in $G'$ (and hence in $G$) to all but at most $5\delta n$ vertices in $W_{1} \cup W_{2} \cup W_{3}$. Similar results hold for $W_{2}$, $W_{3}$ and $W_{4}$. Hence $\delta\left(G[W_{i}]\right)\geq |W_{i}|-5\delta n$ for all $1\leq i \leq 4$. Also, the bipartite graphs $B_{G}[W_{1} ,W_{3}]$, $B_{G}[W_{2} ,W_{4}]$, $R_{G}[W_{1} ,W_{2}]$ and $R_{G}[W_{3} ,W_{4}]$ have minimal degree at least $\left(\frac{1}{4}-9\delta\right)n$.

Our main tools to prove the lemma will be the following two claims. The first excludes a particular case, while the second gives us long monochromatic paths.
\begin{claim}
\label{Claim: no small disconnecting set}
There is no set $S$ of order at most three such that $V\setminus S$ can be partitioned into non-empty sets $X_{1},\ldots, X_{4}$ such that, for $i=1,\ldots,4$ $G$ has no edges between $X_{i}$ and $X_{5-i}$.
\end{claim}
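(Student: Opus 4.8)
The plan is to prove the claim directly from the degree hypothesis $\delta(G)\ge\frac34 n$ alone; neither the colouring nor the clusters of Lemma~\ref{Lemma: pathological case} play any role. The point is that the prescribed non-edge pattern among the $X_i$ would force each of the four parts to have fewer than $\frac14 n$ vertices, which is too few to cover $V\setminus S$ once only three vertices have been removed.

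First I would record that every vertex $v$ of $G$ has at most $n-1-\delta(G)\le\frac14 n-1$ non-neighbours, and since this count is a non-negative integer it is in fact at most $\left\lfloor\frac14 n\right\rfloor-1$. Next, suppose for contradiction that a set $S$ with $|S|\le 3$ and a partition $V\setminus S=X_1\cup X_2\cup X_3\cup X_4$ into non-empty parts exist with no edges between $X_i$ and $X_{5-i}$ for every $i$; that is, no $X_1$--$X_4$ edges and no $X_2$--$X_3$ edges. Picking a vertex $v\in X_1$ (possible, as $X_1\ne\emptyset$), every vertex of $X_4$ is a non-neighbour of $v$, so $|X_4|\le\left\lfloor\frac14 n\right\rfloor-1$; running the same argument from a vertex of $X_4$, of $X_2$, and of $X_3$ yields the same bound on $|X_1|$, $|X_3|$ and $|X_2|$ respectively. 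Summing over the four parts,
$$n-|S|=\sum_{i=1}^{4}|X_i|\le 4\left(\left\lfloor\frac14 n\right\rfloor-1\right)\le n-4,$$
so $|S|\ge 4$, contradicting $|S|\le 3$. (If $S=\emptyset$ the contradiction $n\le n-4$ is immediate.)

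There is no real obstacle here. The only subtlety is the integrality step, where the bound $\frac14 n-1$ on the number of non-neighbours is improved to $\left\lfloor\frac14 n\right\rfloor-1$: this is precisely what closes the gap when $n$ is not divisible by $4$. One also uses that each $X_i$ is non-empty, so that a witnessing vertex can be chosen in each of the four symmetric steps, and this is part of the hypothesis.
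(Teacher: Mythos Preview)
Your proof is correct and uses essentially the same idea as the paper's: a vertex in $X_{5-i}$ has every vertex of $X_i$ as a non-neighbour, so the degree condition bounds $|X_i|$, and this contradicts $|S|\le 3$. The only cosmetic difference is that the paper picks the single largest part $X_i$ and exhibits one vertex of degree below $\frac34 n$, whereas you bound all four parts and sum; your integrality step with $\lfloor n/4\rfloor-1$ is exactly what replaces the paper's choice of a part of size at least $\frac14(n-3)$.
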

\begin{proof}
Suppose that there is such a set $S$. Then $\sum_{i=1}^{4} |X_{i}| \geq n-3$, and so, for some $1\leq i \leq 4$, we have $|X_{i}| \geq \frac{1}{4}(n-3)$. As $X_{5-i} \neq \emptyset$, we may consider a vertex $v \in X_{5-i}$. Then $v$ has no neighbours in $X_{5-i}$ and is also not adjacent to itself. Hence $d_{G} (v) \leq n- \left(|X_{i}| +1\right) < \frac{1}{4}n$, contradicting the minimal degree of $G$.
\end{proof}

\begin{claim}
\label{Claim: no cutvertex}
For any two vertices $u$ and $w$ of $W_{1} \cup W_{2}$, the graph $R_{G}[W_{1}\cup W_{2}]$ contains a $u$-$w$ path of length $\ell$ for all $\ell \in [2,\left(\frac{1}{2}-29\delta \right)n]$ of a given parity (odd if $u\in W_{1}$ and $w\in W_{2}$ or vice versa and even otherwise). If there is a red edge in $W_{1}$ or $W_{2}$, other than $uw$, then $R_{G}[W_{1}\cup W_{2}]$ contains a red $u$-$w$ path of length $\ell$ for all $\ell \in [6,\left(\frac{1}{2}-29\delta \right)n]$.

Futhermore $R_{G}[W_{1}\cup W_{2}]$ contains a cycle of length $\ell$ for all even $\ell \in [4,\left(\frac{1}{2}-29\delta \right)n]$. If there is a red edge in $W_{1}$ or $W_{2}$, then $R_{G}[W_{1}\cup W_{2}]$ contains a cycle of length $\ell$ for all $\ell \in [4,\left(\frac{1}{2}-29\delta \right)n]$.
\end{claim}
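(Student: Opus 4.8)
The plan is to reduce the whole claim to a greedy path-building argument inside the extremely dense bipartite graph $F:=R_{G}[W_{1},W_{2}]$ consisting of the red edges between $W_{1}$ and $W_{2}$, viewed as a subgraph of $R_{G}[W_{1}\cup W_{2}]$. By the estimates recorded just above the claim we have $\delta(F)\ge(\frac14-9\delta)n$, and since $(\frac14-4\delta)n\le|W_{1}|,|W_{2}|\le(\frac14+12\delta)n$ (the lower bound is the one noted at the start of the proof of this lemma; the upper one follows because the $W_{i}$ are disjoint and $V_{0}$ is tiny) this says exactly that every vertex of $F$ is non-adjacent to at most $5\delta n$ vertices of the opposite side. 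In particular any two vertices of $F$ on the same side have at least $\min\{|W_{1}|,|W_{2}|\}-10\delta n$ common neighbours, and -- the fact that drives all the parity bookkeeping -- a path of $F$ joining two vertices has even length precisely when those vertices lie on the same side.

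The main step is the bipartite assertion: for all $u,w\in W_{1}\cup W_{2}$ and every $\ell$ with $2\le\ell\le(\frac12-29\delta)n$ of the parity prescribed by the sides of $u$ and $w$, $F$ (hence $R_{G}[W_{1}\cup W_{2}]$) contains a $u$--$w$ path of length $\ell$. I would build this greedily. Starting at $u$, repeatedly extend the current path by one new vertex on the opposite side, always forbidding the vertices already used and the vertex $w$; once the path has length $\ell-2$ -- its endpoint then lying on the same side as $w$ -- close it up through a common neighbour of that endpoint and $w$. A path of length at most $(\frac12-29\delta)n$ uses at most $\lceil\ell/2\rceil+1\le(\frac14-14\delta)n$ vertices from either side, so at each stage at least $\min\{|W_{1}|,|W_{2}|\}-5\delta n-\lceil\ell/2\rceil-1\ge 5\delta n>0$ admissible vertices remain (and a comparable number of common neighbours at the final step), so the construction never stalls. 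The two smallest lengths $\ell\in\{2,3\}$ are immediate from the codegree and degree bounds.

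For the strengthening when $W_{1}$ or $W_{2}$ contains a red edge $xy$ with $\{x,y\}\neq\{u,w\}$ -- say $x,y\in W_{1}$ -- I would use that edge to flip parity: take a $u$--$x$ path and a $y$--$w$ path in $F$, make them internally disjoint by deleting the interior of the first before building the second (this removes $O(\delta n)$ vertices and leaves $F$ of minimum degree still $\ge\min\{|W_{1}|,|W_{2}|\}-5\delta n-\ell$), and join them through $xy$; distributing the two path-lengths so that they sum to $\ell-1$, and using that each of them is at least $2$ (at least $3$ if an endpoint lies in $W_{2}$), gives a red $u$--$w$ path of length $\ell$ of the parity opposite to that of the first step, for every $\ell\in[6,(\frac12-29\delta)n]$; together with the first step this covers all such $\ell$. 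The coincidences $u\in\{x,y\}$ or $w\in\{x,y\}$ cost nothing: one simply omits the corresponding trivial $F$-path. Finally, the cycle statements are corollaries of the path statement: an even $C_{2s}$ with $2\le s$ is obtained from a length-$(2s-1)$ $F$-path between two adjacent vertices $u\in W_{1}$, $w\in W_{2}$ together with the edge $uw$ (which such a path does not use); and, when a red edge $xy\subseteq W_{1}$ is present, an odd $C_{\ell}$ with $\ell\ge5$ comes from a length-$(\ell-1)$ $F$-path from $y$ to $x$ together with $xy$, while $\ell=3$ gives the red triangle on $x$, $y$ and a common neighbour.

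The individual arguments are routine; the real work is the bookkeeping that keeps the greedy extension alive uniformly over all $\ell$ in the range and over every configuration of sides, parities and degenerate coincidences of $u,w$ with $x,y$. This is exactly what forces the generous constant $29\delta$: one needs $(\frac12-29\delta)n$ to stay comfortably below $2\min\{|W_{1}|,|W_{2}|\}-O(\delta n)$, so that neither side of $F$ is ever exhausted while a path is being grown.
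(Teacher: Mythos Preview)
Your proposal is correct and follows essentially the same approach as the paper: greedy path-building in the very dense red bipartite graph between $W_{1}$ and $W_{2}$ (using the ``at most $5\delta n$ non-neighbours on the other side'' bound), with an internal red edge $xy$ inserted to flip parity when available. The paper's version differs only cosmetically---it first fixes an arbitrary sequence $v_{1},\ldots,v_{r-1}$ in $W_{1}$ and then interleaves common neighbours from $W_{2}$, whereas you extend one step at a time---but the underlying argument and bookkeeping are the same.
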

\begin{proof}
Let $r\leq \min \{|W_{1}|, |W_{2}|-10\delta n\}$. We may assume either that $u$ and $w$ both are in $W_{1}$, in which case we let $v_{1}=u$, or that $u \in W_{2}$ and $w \in W_{1}$, in which case we let $v_{1}$ be some red neighbour of $u$ in $W_{1}\setminus\{v\}$.

Let $v_{2}, \ldots v_{r-1}$ be a sequence of vertices in $W_{1}\setminus \{v_{1},w\}$. Recall that, for all $1\leq i \leq r-1$, the vertex $v_{i}$ is adjacent in $R_{G}$ to all but at most $5\delta n$ vertices of $W_{2}$. Hence, for all $1\leq i \leq r-2$, the vertices $v_{i}$ and $v_{i+1}$ have at least $|W_{2}|-10\delta n-1\geq r-1$ common red neighbours in $W_{2}\setminus\{u\}$. Hence, there are distinct vertices $w_{i}\in W_{2}\setminus\{u\}$ for $1\leq i \leq r-1$ such that $v_{i}w_{i}$ and $w_{i}v_{i+1}$ are edges of $R_{G}$. For all $1\leq i\leq r-1$, the vertices $w$ and $v_{i}$ have at least $r$ common red neighbours in $W_{2}$, and so at least one in $W_{2} \setminus\{w_{1},\ldots,w_{r-2},u\}$. Hence, for all $1\leq i\leq r-1$, there are (not necessarily distinct) vertices $u_{i}$ in $W_{2}\setminus\{w_{1},\ldots,w_{r-2},u\}$ such that $u_{i}$ is adjacent to both $w$ and $v_{i}$ in $B_{G'}$. Similarly, for all $1\leq i\leq r-1$, there are (not necessarily distinct) vertices $u'_{i}$ in $W_{2}\setminus\{w_{1},\ldots,w_{r-2},u\}$ such that $u'_{i}$ is adjacent to both $v_{1}$ and $v_{i}$ in $B_{G'}$.

Then, for all $1 \leq i \leq r-1$,the graph $R_{G'}[W_{1} \cup W_{2}\setminus \{u\}]$ contains a $v_{1}$-$w$ path
\begin{displaymath}
v_{1}w_{1}v_{2}w_{2}\ldots w_{i-1}v_{i}u_{i}w
\end{displaymath}
and, for $i\geq 2$ a cycle
\begin{displaymath}
v_{1}w_{1}v_{2}w_{2}\ldots w_{i-1}v_{i}u'_{i}v_{1},
\end{displaymath}
both of length $2i$. Note that we thus have the required even cycles and fixed parity $u$-$w$ paths.

Suppose that there is a red edge $xy \neq uw$ in $W_{1}$ or $W_{2}$. If the edge is in $W_{1}$, we choose the sequence $v_{2}, \ldots v_{r-1}$ so that one of $v_{1}v_{2}$ and $v_{2}v_{3}$ is the edge $xy$ (which depends on whether $w \in \{x,y\}$). Then, for all $3 \leq i \leq r-1$, the graph $R_{G}[W_{1} \cup W_{2}\setminus \{u\}]$ contains a $v_{1}$-$w$ path
\begin{displaymath}
v_{1}v_{2}w_{2}\ldots w_{i-1}v_{i}u_{i}w \mbox{ or } v_{1}w_{1}v_{2}v_{3}\ldots w_{i-1}v_{i}u_{i}w
\end{displaymath}
of length $2i-1$.

Note that we also have a cycle
\begin{displaymath}
v_{1}v_{2}w_{2}\ldots w_{i-1}v_{i}u'_{i}v_{1} \mbox{ or } v_{1}w_{1}v_{2}v_{3}\ldots w_{i-1}v_{i}u'_{i}v_{1}
\end{displaymath}
of length $2i-1$. The case when the edge is in $W_{2}$ is similar.
\end{proof}

Similar results hold for each of $R_{G}[W_{3}\cup W_{4}]$, $B_{G}[W_{1}\cup W_{3}]$ and $B_{G}[W_{2} \cup W_{4}]$. In particular, there is an edge of some colour in $W_{1}$ and so $G$ contains monochromatic cycles of length $\ell$ for all $\ell \in [4,\left(\frac{1}{2}-29\delta \right)n]$. To complete the proof, we need to show that $G$ contains a monochromatic cycle of length $\ell$ for all $\ell \in [\left(\frac{1}{2}-29\delta \right)n, \left\lceil \frac{1}{2}n\right\rceil]$, and a monochromatic cycle of length at least $\left(1-59\delta\right)n$.

Suppose now that there are two disjoint paths $P_{1}$ and $P_{2}$ from $W_{1}\cup W_{2}$ to $W_{3}\cup W_{4}$ in $R_{G}$. Let $P_{1}$ have endpoints $u$ in $W_{1} \cup W_{2}$ and $u'$ in $W_{3} \cup W_{4}$ and let $P_{2}$ have endpoints $w$ in $W_{1} \cup W_{2}$ and $w'$ in $W_{3} \cup W_{4}$. By restricting to a smaller path if necessary, we may assume that all internal vertices of $P_{1}$ and $P_{2}$ are in $V_{0}$. Then, if there is any red edge in $W_{1}$, other than $uw$, we may use Claim \ref{Claim: no cutvertex} to find $u$-$w$ paths of length $\ell$ for all $\ell \in [6,\left(\frac{1}{2}-29\delta \right)n]$ in $R[W_{1},W_{2}]$. However, Claim \ref{Claim: no cutvertex} also implies that $R_{G}[W_{3},W_{4}]$ contains $u'$-$w'$ paths of length $\ell$ for all $\ell \in [6,\left(\frac{1}{2}-29\delta \right)n]$ of a given parity. By concatenating these paths with $P_{1}$ and $P_{2}$ we see that in this case we have monochromatic cycles of length $\ell$ for all $\ell \in [3,\left(1-58\delta \right)n]$.

So we may assume that if there are two disjoint paths from $W_{1}\cup W_{2}$ to $W_{3}\cup W_{4}$ in $R_{G}$, then $\sum_{i=1}^{4} e(R_{G}[W_{i}]) \leq 2$. Similarly, if there are two disjoint paths from $W_{1}\cup W_{3}$ to $W_{2}\cup W_{4}$ in $B_{G}$, then $\sum_{i=1}^{4} e(B_{G}[W_{i}]) \leq 2$. However, $e(G[W_{i}]) \geq \left(\frac{1}{8}-\frac{9\delta}{2}\right)n|W_{i}|$ for all $1 \leq i \leq 4$ and so without loss of generality we may assume that there are no two disjoint red paths from $W_{1}\cup W_{2}$ to $W_{3}\cup W_{4}$ in $R_{G}$.

By a corollary of Menger's Theorem, there is a vertex $v_{R}$ such that there are no red paths from $W_{1} \cup W_{2}$ to $W_{3} \cup W_{4}$ in $G -\{v_{R}\}$. If there is also a vertex $v_{B}$ such that there are no blue paths from $W_{1} \cup W_{3}$ to $W_{2} \cup W_{4}$ in $G - \{ v_{B} \}$, then taking $S=\{v_{R},v_{B}\}$, we would have a contradiction to Claim \ref{Claim: no small disconnecting set}. Hence we may assume that there are two disjoint blue paths between $W_{1} \cup W_{3}$ and $W_{2} \cup W_{4}$. Hence, applying Claim \ref{Claim: no cutvertex} to the ends of these paths as above there is a blue cycle of length at least $\left(1-59\delta\right)n$. Thus $G$ contains a monochromatic cycle of length at least $\left(1-59\delta\right)n$.

To complete the proof, we need to show that $G$ contains a monochromatic cycle of length $\ell$ for all $\ell \in [\left(\frac{1}{2}-29\delta \right)n, \left\lceil \frac{1}{2}n\right\rceil]$. We shall find a lower bound on the red degree of each vertex. Recall that we are assuming that there are two disjoint blue paths between $W_{1} \cup W_{3}$ and $W_{2} \cup W_{4}$. Hence we may assume that $e(B_{G}[W_{1}]) \leq 1$, or else we have blue cycles of length $\ell$ for all $\ell \in [3,\left(1-58\delta \right)n]$ as above. If $v \in W_{1}$, then in $G'$, $v$ has at most $5\delta n$ non-neighbours in $W_{1} \cup W_{2}$ and no blue neighbours in $W_{2}$. However $v$ has at most one blue neighbour in $W_{1}$ in $G$ and hence in $G'$. Thus all vertices in $W_{1}$ have red degree at least $|W_{1}|+|W_{2}| -5\delta n -1$ in $G'$ and hence in $G$. Similar bounds hold for all vertices in $\bigcup_{i=1}^{4}W_{i}$.

Suppose that some vertex $v \in V_{0}$ has at least $\left(\frac{1}{2}+8\delta\right)n +3$ blue neighbours. Then it must have at least two blue neighbours in at least three of the sets $W_{i}$. Suppose that there is a blue path $P$ from $W_{1} \cup W_{3}$ to $W_{2} \cup W_{4}$ in $G-\{v\}$. Without loss of generality, we may assume that $P$ has endpoints $u' \in W_{1}$ and $u \in W_{2}$ and all internal vertices of $P$ are in $V_{0}$. Suppose that $v$ has at least two blue neighbours in each of $W_{1}$, $W_{2}$ and $W_{3}$, the other cases being similar. We may find $w' \in W_{1}$, $w \in W_{2}$ and $w'' \in W_{3}$ with $\{u,u'\} \cap \{ w,w',w''\}=\emptyset$ such that each of $w$, $w'$ and $w''$ are blue neighbours of $v$.

By Claim \ref{Claim: no cutvertex} we have the following paths:
\begin{itemize}
\item for all even $\ell \in [6,\left(\frac{1}{2}-29\delta \right)n]$, $B[W_{2},W_{4}]$ contains a $u$-$w$ path $P_{\ell}$ of length $\ell$;
\item for all even $\ell' \in [6,\left(\frac{1}{2}-29\delta \right)n]$, $B[W_{1},W_{3}]$ contains a $u'$-$w'$ path $P'_{\ell'}$ of length $\ell'$;
\item for all odd $\ell'' \in [7,\left(\frac{1}{2}-29\delta \right)n]$, $B[W_{1},W_{3}]$ contains a $u$-$w''$ path $P''_{\ell''}$ of length $\ell''$.
\end{itemize}
Then, for all even $\ell,\ell' \in  [6,\left(\frac{1}{2}-29\delta \right)n]$, the path
\begin{displaymath}
uP_{\ell}wvw'P'_{\ell'}u'
\end{displaymath}
is a blue $u$-$u'$ path of length $2+\ell +\ell'$ which is internally disjoint from $P$. Similarly, for all even $\ell \in  [6,\left(\frac{1}{2}-29\delta \right)n]$ and odd $\ell'' \in  [6,\left(\frac{1}{2}-29\delta \right)n]$, the path
\begin{displaymath}
uP_{\ell}wvw''P''_{\ell''}u'
\end{displaymath}
is a blue $u$-$u'$ path of length $2+\ell +\ell''$ which is internally disjoint from $P$.

Hence, for all $L \in [14, \left(1-58\delta\right)n]$, there is a blue $u$-$u'$ path of length $L$ which is internally disjoint from $P$. Since $|P|\leq |V_{0}|+2\leq \delta n$, this gives blue cycles of length $L$ for all $L \in[\delta n +14,\left(1-58\delta\right)n]$. As we have already shown that $G$ contains monochromatic cycles of length $\ell$ for all $\ell \in [3,\left(\frac{1}{2}-29\delta \right)n]$, we are done. Hence, if there is a vertex $v \in V_{0}$ with blue degree at least $\left(\frac{1}{2}+8\delta\right)n +3$, there are no blue paths from $W_{1} \cup W_{3}$ in $W_{2} \cup W_{4}$ in $G-\,v\}$. This contradicts Claim \ref{Claim: no small disconnecting set}, with $S=\{v,v_{R}\}$. Thus each vertex in $V_{0}$ has blue degree at most $\left(\frac{1}{2}+8\delta\right)n +3$, and so red degree at least $\left(\frac{1}{4}-8\delta\right)n-3$.

Let $C_{1}$ be the red component of $G-\{v_{R}\}$ containing $W_{1} \cup W_{2}$ and $C_{2}$ be the red component of $G-\{v_{R}\}$ containing $W_{3} \cup W_{4}$. We know that $R_{G}[W_{1} \cup W_{2}]$ and $R_{G}[W_{3}\cup W_{4}]$ are connected, and the minimal red degree condition in $V_{0}$ ensures that there are at most two components in $R_{G}[V-\{v_{R}\}]$. As $v_{R}$ has red degree at least $\left(\frac{1}{4}-8\delta\right)n-3$, it has at least $\left(\frac{1}{8}-5\delta\right)n$ red neighbours in at least one of $C_{1}$ or $C_{2}$. Let $C'_{i}$ be the set $C_{i}$, with $v_{R}$ added if it has at least $\left(\frac{1}{8}-5\delta\right)n$ red neighbours in $C_{i}$.

Then $|C'_{1}|+|C_{2}'|\geq n$ and so we may assume without loss of generality that $|C_{1}'|\geq \left\lceil \frac{1}{2}n\right\rceil$. All vertices in $C_{1}'$ have degree in $R[C_{1}']$ at least $\left(\frac{1}{8}-5\delta\right)n$. Further, all vertices in $C_{1}'\setminus|V_{0}|$ have degree in $R[C_{1}']$ at least $|C_{1}'|-6\delta n$. As $|C_{1}'| \leq \left(\frac{1}{2}+8\delta \right)n$ and $|V_{0}|\leq \epsilon n$, the condition of Theorem \ref{Thm: chvatal} holds on $R[C_{1}']$ and so $R[C_{1}']$ is hamiltonian. But we also have 
\begin{align*}
e(R[C_{1}'])&\geq \frac{1}{2}\left(|C_{1}'|-6\delta n\right) \left( |C_{1}'|-|V_{0}|\right) \\
&> \frac{1}{4} |C_{1}'|^{2}.
\end{align*}
Hence, by Theorem \ref{Thm: bondy}, $R[C_{1}']$ is pancyclic and we are done.
\end{proof}

\section{Monochromatic circumference}
\label{Sec: Monochromatic circumference}
In this section we shall look at the monochromatic circumference of a graph. We begin by proving Theorem \ref{Thm: circumference}.
\begin{proof}[Proof of Theorem \ref{Thm: circumference}]
As in the proof of Theorem \ref{Thm: main result}, we consider the reduced graph $H$, which has order $k$ and minimal degree at least $\left(\frac{3}{4}-\delta\right)k$. Applying Lemma \ref{Lemma: reduced graph}, we have one of the following.
\begin{enumerate}[(i)]
\item There is a component of $R_{H}$ or $B_{H}$ which contains a matching on at least $\left( \frac{2}{3} +\delta \right)k$ vertices.
\item There is a set $S$ of order at least $\left(\frac{2}{3}-\frac{\delta}{2}\right)k$ such that either $\Delta(R_{H}[S]) \leq 10\delta k$ or $\Delta(B_{H}[S]) \leq 10\delta k$.
\item There is a partition $V(H)=U_{1} \cup \cdots \cup U_{4}$ with $\underset{i}{\min} |U_{i}| \geq \left(\frac{1}{4}-3\delta\right)k$ such that there are no blue edges from $U_{1} \cup U_{2}$ to $U_{3} \cup U_{4}$ and no red edges from $U_{1} \cup U_{3}$ to $U_{2} \cup U_{4}$.
\end{enumerate}

In the first case, we use the Blow-Up Lemma as in Theorem \ref{Thm: main result} to find a monochromatic cycle of length at least $\left(\frac{2}{3}+\frac{\delta}{2}\right)n$. In the second case, assume without loss of generality that $\Delta(R_{H}[S]) \leq 10\delta k$. Then, by Lemma \ref{Lemma: large sparse set}, $G$ contains a blue cycle of length $\ell$ for all $\ell \in \left[3,\left(\frac{2}{3}-\delta\right)n\right]$. In the third case \ref{Lemma: pathological case} implies that $G$ contains a monochromatic cycle of length at least $\left(1-59\delta\right)n \geq \left(\frac{2}{3}+\delta\right)n$.
\end{proof}

We will make the following definition.
\begin{defn}
For $0<c<1$, let $\Phi=\Phi_{c}$ be the supremum of values $\phi$ such that any graph $G$ of sufficiently large order $n$ with $\delta(G)>cn$ and a $2$-colouring $E(G)=E(R) \cup E(B)$ has monochromatic circumference at least $\phi n$.
\end{defn}

For $c\geq \frac{3}{4}$, Theorem \ref{Thm: circumference} implies that $\Phi_{c} \geq \frac{2}{3}$. However, the example given after Theorem \ref{Thm: circumference} shows that $\Phi_{c} \leq \frac{2}{3}$ for all $c$. We can also find upper and lower bounds for $\Phi_{c}$ when $c<\frac{3}{4}$, and we collect them into the following theorem.

\begin{thm}
\label{Thm: upper and lower bounds for circumference}
For all $c\geq \frac{3}{4}$, we have $\Phi_{c}=\frac{2}{3}$. For all $c\in(0,1)$, we have $ \Phi_{c} \geq  \frac{1}{2}c$. Also, there are the following upper bounds on $\Phi_{c}$.
\begin{align*}
\Phi_{c} \leq  
\begin{cases}
\frac{1}{2} &\quad c \in [\frac{3}{5},\frac{3}{4}) \\
\frac{2}{5} &\quad c \in [\frac{5}{9},\frac{3}{5}) \\
\frac{1}{r} &\quad c < \frac{2r-1}{r^{2}} \mbox{ for all } r\geq 3.
\end{cases}
\end{align*}
\end{thm}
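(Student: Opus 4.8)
The plan is to prove the equality $\Phi_c=\tfrac23$ for $c\geq\tfrac34$ and the bound $\Phi_c\geq\tfrac12 c$ by general arguments, and to establish each of the upper bounds by exhibiting an explicit $2$-edge coloured graph, checking in each case that $\delta(G)>cn$ holds in the stated range (for $n$ large) and that the monochromatic circumference is as claimed.

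I would begin with the lower bounds. For $c\geq\tfrac34$, apply Theorem \ref{Thm: circumference} to a $2$-coloured $G$ with $\delta(G)>cn\geq\tfrac34 n$: for any admissible choice of the small parameter in that theorem, $G$ either has a monochromatic cycle of length at least $\bigl(\tfrac23+o(1)\bigr)n$ or monochromatic cycles of all lengths up to $\bigl(\tfrac23-o(1)\bigr)n$, so in either case the monochromatic circumference is at least $\bigl(\tfrac23-o(1)\bigr)n$; letting the parameter tend to $0$ gives $\Phi_c\geq\tfrac23$, and since the example $F_{2t,t}$ (noted after Theorem \ref{Thm: circumference}) shows $\Phi_c\leq\tfrac23$ for every $c<1$, we get $\Phi_c=\tfrac23$. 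For the bound $\Phi_c\geq\tfrac12 c$ valid for all $c$: if $G$ has $\delta(G)>cn$ and monochromatic circumference $L$, then by Theorem \ref{Thm: erdos-gallai} each colour class has at most $\tfrac12(n-1)L$ edges, so $\tfrac12 cn^2<e(G)=e(R)+e(B)\leq(n-1)L$, whence $L>\tfrac12 cn$.

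For the upper bounds I would use two blow-up constructions. For $\Phi_c\leq\tfrac1r$ (with $r=2$ giving the case $\Phi_c\leq\tfrac12$), split $n=r^2m$ vertices into an $r\times r$ array of cells of size $m$; colour every edge inside a cell and every edge between two cells in a common row red, colour every edge between two cells in a common column blue, and leave no edge between cells sharing neither a row nor a column. Then the red graph is $r$ disjoint copies of $K_{rm}$ and the blue graph is $r$ disjoint copies of the complete $r$-partite graph with parts of size $m$; each such component is Hamiltonian and has order $rm$, so the monochromatic circumference is exactly $\tfrac nr$. Each vertex has exactly $(rm-1)+(r-1)m=(2r-1)m-1$ neighbours, so $\delta(G)/n\to\tfrac{2r-1}{r^2}$, and taking $m$ large gives $\delta(G)>cn$ whenever $c<\tfrac{2r-1}{r^2}$; hence $\Phi_c\leq\tfrac1r$. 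For $\Phi_c\leq\tfrac25$, let $n=5\nu$, split $V$ into $A_1,A_2$ of size $2\nu$ and $B$ of size $\nu$, colour all edges inside $A_1$, inside $A_2$ and inside $B$ red, colour all edges between $B$ and $A_1\cup A_2$ blue, and leave no edge between $A_1$ and $A_2$. Then the red graph is $K_{2\nu}\cup K_{2\nu}\cup K_\nu$ and the blue graph is $K_{4\nu,\nu}$, so the monochromatic circumference is $2\nu=\tfrac25 n$; a vertex of $A_1$ or $A_2$ has degree $(2\nu-1)+\nu=3\nu-1$ and a vertex of $B$ has degree $n-1$, so $\delta(G)/n\to\tfrac35$ and $\Phi_c\leq\tfrac25$ whenever $c<\tfrac35$. (This last construction is the graph $F_{4\nu,\nu}$ with its part of size $4\nu$ split into two red cliques; more generally the analogous split into $k$ cliques gives $\Phi_c\leq\tfrac{2}{2k+1}$ for $c<\tfrac{3}{2k+1}$, and one checks that $k=2$ is the only one of these that improves on the grid bounds, precisely on the interval $[\tfrac59,\tfrac35)$.)

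The genuinely substantial ingredient here is the lower bound for $c\geq\tfrac34$, which is Theorem \ref{Thm: circumference} and is already established; everything else reduces to a one-line Erdős--Gallai estimate and routine verification of the constructions. The only point I expect to need care is confirming the circumferences of the monochromatic pieces: that a balanced blow-up of a complete $r$-partite graph is Hamiltonian, that $K_{4\nu,\nu}$ has circumference $2\nu$, and above all that the colourings leave no stray edges between cells (respectively between $A_1$ and $A_2$), so that the red and blue graphs really do decompose into the small components listed and no longer monochromatic cycle can appear.
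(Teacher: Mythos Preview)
Your proposal is correct and follows essentially the same route as the paper: the lower bound $\Phi_c\geq\tfrac12 c$ via Erd\H{o}s--Gallai, the equality for $c\geq\tfrac34$ via Theorem~\ref{Thm: circumference} and $F_{2t,t}$, the $r\times r$ grid blow-up for $\Phi_c\leq\tfrac1r$ (including $r=2$), and the $2\nu$--$2\nu$--$\nu$ construction for $\Phi_c\leq\tfrac25$ are precisely the paper's arguments up to cosmetic relabellings (swapping rows/columns, and colouring the edges inside the small part red rather than blue).
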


Note that, as $c \to 0$, we may use the last upper bound to show that $\frac{\Phi_{c}}{\frac{1}{2}c} \to 1$. Hence, asymptotically, as $c \to 0$, the upper and lower bounds on $\Phi_{c}$ agree.

\begin{proof}[Proof of Theorem \ref{Thm: upper and lower bounds for circumference}]
For $c \in (0,1)$, a $2$-edge coloured graph with $\delta(G)>cn$ has at least $\frac{c}{2}n^{2}$ edges. Hence there are at least $\frac{c}{4}n^{2}$ edges of one colour. We may deduce from Theorem \ref{Thm: erdos-gallai} that, in that colour, there is a cycle of length at least $\frac{1}{2}c$. Hence $\Phi_{c} \geq \frac{1}{2} c$ for all $c \in (0,1)$. We now prove the upper bounds on $\Phi_{c}$.

For $c \in [\frac{5}{9},\frac{3}{5})$, let $t$ be an integer such that $t> \frac{1}{3-5c}$. We define a graph $G_{t}'$ as follows. Let $S_{1}$ and $S_{2}$ be sets of order $2t$ and $T$ be a set of order $t$. Let $R$ be the union of the complete graph on $S_{1}$ and the complete graph on $S_{2}$. Then $R$ has circumference $2t$. Let $B$ be the union of the complete graph on $T$ and the complete bipartite graph between $T$ and $S_{1} \cup S_{2}$. Then, any two consecutive vertices of a cycle in $B$ must contain a vertex of $T$ and hence $B$ has circumference at most $2t$. Let $G'_{t}$ be the union of $R$ and $B$. Then $\delta(G) =3t-1 > c|G'_{t}|$ and so $\Phi_{c} \leq \frac{2}{5}$.

\begin{figure}[h!]
\centering
\includegraphics{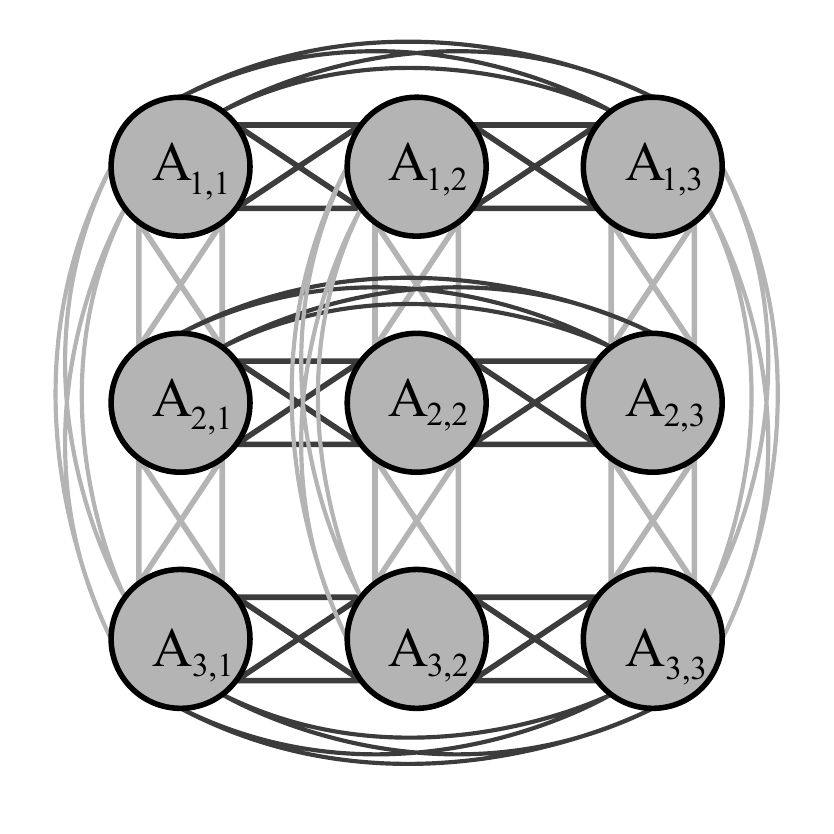}
\caption{The graph $G^{(3)}_{t}$}
\label{Fig: 1/2<c<3/4}
\end{figure}

If, for $r\geq 2$, we have $c \in(0, \frac{2r-1}{r^{2}})$, let $t$ be an integer such that $t>\frac{1}{2r-1-R^{2}c}$. Define a family $\{A_{i,j}:1\leq i \leq r, 1\leq j \leq r\}$ of sets of order $t$. We define the following graphs on vertex set $\bigcup_{i,j} A_{i,j}$:
\begin{align*}
E(B)&= \{ uv: u \in A_{i,j} , v\in A_{i,j'} \mbox{ for some $1\leq i \leq r$ and $j \neq j'$} \}; \\
E(R)&= \{ uv: u \in A_{i,j} , v\in A_{i',j} \mbox{ for some $1\leq j \leq r$.} \}
\end{align*}
Let $G^{(r)}_{t}$ be the union of the graphs $R$ and $B$, as illustrated in Figure \ref{Fig: 1/2<c<3/4}, for the case $r=3$. Then $|G^{(r)}_{t}|=r^{2}t$ and $\delta(G''_{r,t}) = (2r-1)t-1>c|G^{(r)}_{t}|$. However as all monochromatic components have order $rt$, there are no monochromatic cycles of length greater than $\frac{1}{r}\left|G^{(r)}_{t}\right|$. Hence $\Phi_{c} \leq \frac{1}{r}$. Note that this case includes the bound $\Phi_{c} \leq \frac{1}{2}$ for $c < \frac{3}{4}$.
\end{proof}

\section{Conclusion}
\label{Sec: conclusion}
Theorem \ref{Thm: main result} is a $2$-colour version of the uncoloured (or $1$-coloured) result of Bondy that graphs with order $n$ and minimal degree at least $\frac{1}{2}n$ are pancyclic. We may hope to generalise to $k$ colours. In this case, we let $E(G)=\bigcup_{i=1}^{k} E(G_{i})$ be an edge colouring, where each $G_{i}$ is a spanning subgraph of $G$, representing the edges coloured $i$. Our extremal graph was found by letting both $R$ and $B$ be subgraphs of the extremal graph in the uncoloured case, and we again use this method to find $k$-coloured graphs with high minimum degree but no odd cycles.
\begin{defn}
Let $n=2^{k}p$ and let $G$ be isomorphic to the $2^{k}$-partite graph with classes all of order $p$. A \emph{$k$-bipartite $k$-edge colouring} of $G$ is a $k$-edge colouring $E(G)=\bigcup_{i=1}^{k} E(G_{i})$ such that each $G_{i}$ is bipartite.
\end{defn}
As in the $2$-coloured case, we can deduce that a $k$-bipartite $k$-edge colouring of the $2^{k}$-partite graph with classes all of order $p$ induces a labelling $U_{\alpha}$ ($\alpha \in \{1 ,2\}^{k}$) of the classes such that, for all $i$, the graph $G_{i}$ is bipartite with classes
\begin{displaymath}
\bigcup_{\alpha: \alpha_{i}=1} U_{\alpha}
\end{displaymath}
and
\begin{displaymath}
\bigcup_{\alpha: \alpha_{i}=2} U_{\alpha}.
\end{displaymath}
Note that this implies that, if $\alpha$ and $\beta$ in $\{1,2\}^{k}$ differ only in the $i$th place, then all edges between $U_{\alpha}$ and $U_{\beta}$ are coloured with $i$. As this graph has minimum degree $\left(1-\frac{1}{2^{k}}\right)n$, we make the following conjecture.

\begin{conj}
Let $n\geq 3$, and $k$ be an integer. Let $G$ be a graph of order $n$ with $\delta(G)\geq \left(1-\frac{1}{2^{k}}\right)n$. If $E(G)=\cup_{i=1}^{k} E(G_{i})$ is a $k$-edge colouring, then either:
\begin{itemize}
\item  for all $\ell \in \left[\min\{2^{k},3\},\left\lceil\frac{1}{2^{k-1}}n\right\rceil\right]$ there is some $1\leq i \leq k$ such that $C_{\ell} \subseteq G_{i}$, or;
\item $n=2^{k} p$, $G$ is the complete $2^{k}$-partite graph with classes of order $p$, and the colouring is a $k$-biparitite $k$-edge colouring.
\end{itemize}
\end{conj}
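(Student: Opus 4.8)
The plan is to run the three-stage argument behind Theorem~\ref{Thm: main result} with $2^{k}$ in place of $4$ throughout: first handle monochromatic cycles of bounded length, then use the $k$-colour Regularity, Embedding and Blow-up Lemmas to produce long monochromatic cycles, and in each case rule out everything except a $k$-bipartite colouring of the complete $2^{k}$-partite graph.

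\emph{Short cycles.} Exactly as in Lemma~\ref{Lemma: odd cycles}, monochromatic even cycles of every length up to a fixed $K$ are free: one colour class has at least $\frac{1}{2k}\delta(G)n\gg n^{1+1/K}$ edges, so Theorem~\ref{Thm: bondy/simonovits} applies. For short odd cycles one wants to reach the hypothesis of Theorem~\ref{Thm: gyori et al}: splitting as in Claim~\ref{Claim: C3 or C5 is enough} according to whether some colour class has a vertex of very large degree, one shows that either a localised colour class has minimum degree above $\frac{n}{2(2k+1)}+c$ (so that a single monochromatic $C_{3}$ or $C_{5}$ propagates to all longer odd cycles of that colour), or one already finds all short cycles directly. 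To obtain a monochromatic $C_{3}$ or $C_{5}$ in the first place, suppose none exists. Then $G$ contains no $K_{2^{k}+1}$: intersecting the $k$ bipartitions shows that a $k$-edge-coloured complete graph with every colour class bipartite has at most $2^{k}$ vertices, and a bound on the odd girth of a non-bipartite colour class handles the general case. By Tur\'an's theorem $G\subseteq T_{2^{k}}(n)$, and $\delta(G)\ge(1-2^{-k})n$ forces $n=2^{k}p$ and $G\cong K_{p,\dots,p}$ with $2^{k}$ classes. On this graph one takes a shortest odd cycle $C$ in a non-bipartite colour class, properly $2^{k}$-colours $C$ by its parts, and by case analysis on three consecutive vertices finds a triangle or $5$-cycle of $G$ edge-disjoint from $C$; some edge of it lies in the colour of $C$, producing a shorter odd monochromatic cycle and a contradiction --- unless the colouring is $k$-bipartite.

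\emph{Long cycles.} Apply the $k$-colour degree form of the Regularity Lemma to obtain a reduced $k$-edge-coloured graph $H$ with $\delta(H)\ge(1-2^{-k}-\delta)k$. The crux is a $k$-colour analogue of Lemma~\ref{Lemma: reduced graph}: either some monochromatic component of $H$ contains a matching covering a suitable constant fraction of $V(H)$, or there is a large subset on which one colour class has maximum degree $o(k)$, or $V(H)$ splits into $2^{k}$ nearly-equal parts carrying precisely the $k$-bipartite edge pattern. In the first case one passes, as in the proof of Theorem~\ref{Thm: main result}, to an odd cycle or to a near-spanning component (a $k$-colour version of Lemma~\ref{Lemma: large matching in large component or with odd cycle}), makes the matching edges super-regular, splices the paths produced by the Embedding Lemma (Theorem~\ref{Thm: embedding lemma}) through the blown-up matching, and embeds all remaining cycle lengths up to $(1-o(1))\lceil n/2^{k-1}\rceil$ via the Blow-up Lemma, closing the last gap with Theorem~\ref{Thm: chvatal} and Theorem~\ref{Thm: bondy}. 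The second case is handled as in Lemma~\ref{Lemma: large sparse set} (Dirac plus Bondy on the near-independent set of the other colour), and the third as in Lemma~\ref{Lemma: pathological case}, whose within-part and cross-part path constructions (via Menger's theorem and long paths in super-regular pairs) transfer with $4$ replaced by $2^{k}$.

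\emph{Main obstacle.} The genuinely hard part is the structural lemma for the reduced graph. For $k=2$ the proof of Lemma~\ref{Lemma: reduced graph} rests on the component analysis of Claim~\ref{Claim: components cover G} and the defect-matching estimates of Claims~\ref{Claim: greatly separating set}--\ref{Claim: S is large gives sparse set}, all tuned to the arithmetic of $\frac{3}{4}$ and $\frac{1}{4}$. Extending this to general $k$ seems to call either for an induction on the number of colours --- delete a colour and apply the case $k-1$ to the union of the first $k-1$ classes --- which is awkward because removing a colour moves the threshold from $1-2^{-k}$ to $1-2^{-(k+1)}$ rather than to $1-2^{-(k-1)}$, or for a direct analysis of how the (at most $2^{k}$) monochromatic components of the different colours intersect, tracking which vertices can have large degree in which colour. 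Getting the resulting constants to line up, and carrying out the now much larger case analysis for short odd cycles inside $K_{p,\dots,p}$, is where essentially all of the difficulty lies.
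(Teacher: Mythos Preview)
The statement you are attempting to prove is a \emph{conjecture} in the paper, not a theorem: the authors state it in Section~\ref{Sec: conclusion} as an open problem and give no proof.  So there is no ``paper's own proof'' to compare against, and indeed even the case $k=2$ is only established for sufficiently large $n$ (Theorem~\ref{Thm: main result}), whereas the conjecture asks for all $n\ge 3$.

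Your proposal is a reasonable outline of how one would \emph{try} to extend the $k=2$ argument, and you correctly identify the structural lemma for the reduced graph as the central obstacle.  But what you have written is a strategy, not a proof, and several steps that you pass over lightly genuinely break for $k\ge 3$:
\begin{itemize}
\item The dichotomy behind Claim~\ref{Claim: C3 or C5 is enough} does not survive.  For two colours, $\Delta(B)\le \tfrac12 n+4L$ forces $\delta(R)\ge \tfrac14 n-4L$, which is what feeds Theorem~\ref{Thm: gyori et al}.  With $k\ge 3$ colours, bounding the maximum degree of each colour class gives no lower bound on the \emph{minimum} degree of any single colour, so there is no colour to which Theorem~\ref{Thm: gyori et al} applies.
\item Your Tur\'an step needs more than the absence of a monochromatic $C_3$ or $C_5$.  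A $k$-colouring of $K_{2^{k}+1}$ must contain a monochromatic odd cycle (since the intersection of $k$ bipartitions has at most $2^{k}$ cells), but that cycle may have any odd length up to $2^{k}+1$.  So to conclude $K_{2^{k}+1}\not\subseteq G$ you must assume no monochromatic $C_3,C_5,\dots,C_{2^{k}+1}$, and the subsequent case analysis inside $K_{p,\dots,p}$ (the analogue of the end of the proof of Lemma~\ref{Lemma: odd cycles}) must shorten an odd cycle of length possibly as large as $2^{k}+1$, not just $\ge 7$.
\item For the long-cycle part you assert a $k$-colour analogue of Lemma~\ref{Lemma: reduced graph} but, as you yourself note, neither the induction on $k$ nor the direct component analysis goes through: deleting a colour moves the degree threshold in the wrong direction, and the intersection pattern of up to $2^{k}$ monochromatic components produces a case explosion with no evident substitute for Claims~\ref{Claim: components cover G}--\ref{Claim: S is large gives sparse set}.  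Likewise Lemma~\ref{Lemma: large sparse set} uses that the \emph{single} remaining colour on a large sparse set has minimum degree above half its size; with $k-1$ remaining colours this no longer yields a Hamiltonian (hence pancyclic) monochromatic subgraph.
\end{itemize}
In short, you have accurately located the difficulties, but none of them is resolved; the conjecture remains open.
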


Note that the case when $k=1$ is Bondy's Theorem, and the case $k=2$ is Theorem \ref{Thm: main result}.

We pose the following problem about the monochromatic circumference.

\begin{prob}
What is the value of $\Phi_{c}$ for $c<\frac{3}{4}$?
\end{prob}

Note that Theorem \ref{Thm: upper and lower bounds for circumference} shows that $\Phi_{c}=\frac{2}{3}$ for all $c \geq \frac{3}{4}$. In this case, we make the following conjecture with an exact bound on the monochromatic circumference.

\begin{conj}
Let $G$ be a graph of order $n$ with $\delta(G) \geq \frac{3}{4}n$. Let $n=3t+r$, where $r \in\{0,1,2\}$. If $E(G)=E(R_{G}) \cup E(B_{G})$ is a $2$-edge colouring, then $G$ has monochromatic circumference at least $2t+r$.
\end{conj}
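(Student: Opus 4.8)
The plan is a stability argument on top of the approximate result, Theorem~\ref{Thm: circumference}. The extremal graphs $F_{2t,t}$ are complete, so the hypothesis $\delta(G)\geq\tfrac34 n$ leaves a great deal of room away from them; the whole difficulty is that the regularity method underlying Theorem~\ref{Thm: circumference} only controls cycle lengths up to an additive $\Theta(\delta n)$, whereas here the target $2t+r=\lceil\tfrac23 n\rceil$ is exact and depends on $n\bmod 3$. So I would proceed in two stages: reduce to a graph that is extremely close to some $F_{2t,t}$, and then analyse that near-extremal case by hand.

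For the reduction, fix a small $\delta>0$ and apply Theorem~\ref{Thm: circumference}. If $G$ has monochromatic circumference at least $(\tfrac23+\tfrac\delta2)n$ then, since this exceeds $\lceil\tfrac23 n\rceil$ for large $n$, we are done. Otherwise one of $R_G,B_G$ contains $C_\ell$ for all $\ell\in[3,(\tfrac23-\delta)n]$, and I would re-trace the proof of Theorem~\ref{Thm: circumference} to recover the structure behind this: that conclusion comes from case~(ii) of Lemma~\ref{Lemma: reduced graph} via Lemma~\ref{Lemma: large sparse set}, whose proof in fact exhibits a set $S\subseteq V(G)$ with $|S|\geq(\tfrac23-\delta)n$ and, say, $\Delta(R_G[S])\leq 11\delta n$. (Cases (i) and (iii) of Lemma~\ref{Lemma: reduced graph} are excluded, as they would give, through the Blow-up Lemma and Lemma~\ref{Lemma: pathological case}, a monochromatic cycle of length $(1-O(\delta))n>\lceil\tfrac23 n\rceil$.)

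Since $\delta(B_G[S])>\tfrac12|S|$, the graph $B_G[S]$ is pancyclic by Theorems~\ref{Thm: Dirac} and~\ref{Thm: bondy}, so $B_G$ contains cycles of every length in $[3,|S|]$; if $|S|\geq\lceil\tfrac23 n\rceil$ we are finished. Otherwise I would partition $V\setminus S$ according to whether a vertex has at least $\tfrac12\lceil\tfrac23 n\rceil$ blue neighbours in $S$: a vertex with at most $11\delta n$ red neighbours in $S$ has at least that many blue ones (using $\delta(G)\geq\tfrac34 n$ and $|V\setminus S|\leq(\tfrac13+\delta)n$), so if at least $\lceil\tfrac23 n\rceil-|S|$ vertices of $V\setminus S$ qualify we may adjoin them to $S$ to obtain a set $T$ of order exactly $\lceil\tfrac23 n\rceil$ with $\delta(B_G[T])\geq\tfrac12|T|$ and more than $\tfrac14|T|^2$ edges, hence pancyclic, giving a blue $\lceil\tfrac23 n\rceil$-cycle. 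In the remaining case the set $W$ of vertices of $V\setminus S$ with fewer than $\tfrac12\lceil\tfrac23 n\rceil$ blue neighbours in $S$ satisfies $|W|\geq\lfloor\tfrac13 n\rfloor$; each such vertex then has at least $(\tfrac1{12}-2\delta)n$ red neighbours in $S$, so $G$ is within $O(\delta n)$ of $F_{2t,t}$, with $S$ and $W$ the two parts and $R_G$ the ``dense'' colour, and one must show that the red graph induced on $W\cup S$ contains a cycle of length at least $\lceil\tfrac23 n\rceil$. Here $2|W|$ is already essentially $\tfrac23 n$, so the point is to route a red alternating cycle through enough of $W$, using the red edges between $W$ and $S$, the red edges inside $W$, and the few inside $S$, with the exact residue $r$ handled using $\delta(G)\geq\tfrac34 n$.

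The hard part is precisely this last sub-case: it is where the conjecture is tight, the regularity machinery has nothing more to offer, and an exact extremal argument must be supplied by hand, with careful bookkeeping of $n\bmod 3$ and of the vertices that lie outside the dense monochromatic set yet carry edges of that colour (the phenomenon exhibited by the example after Theorem~\ref{Thm: circumference}). A secondary, routine, point is that Lemmas~\ref{Lemma: reduced graph}, \ref{Lemma: large sparse set} and~\ref{Lemma: pathological case} are phrased for the reduced graph, so one should note that their proofs in fact yield the corresponding statements for $G$ itself, which is what the exact bound forces us to use.
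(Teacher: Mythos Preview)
This statement is stated in the paper as a \emph{conjecture}, not a theorem: the authors explicitly leave it open, remarking only that Theorem~\ref{Thm: circumference} is an asymptotic version and that $F_{2t+r,t}$ shows the bound would be sharp. There is therefore no proof in the paper against which to compare your attempt.

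As for the proposal itself, it is a plan of attack rather than a proof. Your reductions through Theorem~\ref{Thm: circumference} and the trichotomy of Lemma~\ref{Lemma: reduced graph} are sound, and the augmentation argument in your step~6 (enlarging $S$ to a set $T$ of size exactly $\lceil\tfrac{2}{3}n\rceil$ with $\delta(B_G[T])\geq\tfrac12|T|$, hence Hamiltonian) is correct. But you yourself flag step~8 as ``the hard part'' and supply no argument there; that gap is precisely why the paper records the statement as a conjecture. Note also that your description of the residual configuration as ``within $O(\delta n)$ of $F_{2t,t}$'' overstates what has actually been extracted: a vertex $w\in W$ is only guaranteed about $\tfrac{1}{12}n$ red neighbours in $S$ (not the $|S|\approx\tfrac{2}{3}n$ that the extremal colouring provides), nothing is known about red degrees from $S$ into $W$, and nothing is known about the colouring inside $W$. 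Your phrase ``$R_G$ the dense colour'' also appears to be a slip, since it is $R_G[S]$ that is sparse. So the structural information recovered is substantially weaker than edit-distance closeness to $F_{2t,t}$, and turning it into a red cycle of exact length $\lceil\tfrac{2}{3}n\rceil$ would require a genuinely new argument --- which, again, is why the paper leaves this open.
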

Note that Theorem \ref{Thm: circumference} is an asymptotic version of this conjecture. By considering the graph $F_{2t+r,t}$ as defined in Section \ref{Sec: Ramsey intro}, we see that this conjecture is best possible.

\bibliographystyle{acm}
\bibliography{/homes/homes0/v1/pg_2007/white/Documents/Bibs/graphtheory.bib}
\end{document}